\newtheorem{theorem}{Theorem}[section]
\newtheorem{proposition}[theorem]{Proposition}
\newtheorem{lemma}[theorem]{Lemma}
\newtheorem{corollary}[theorem]{Corollary}
\newtheorem{conjecture}[theorem]{Conjecture}
\newtheorem{question}[theorem]{Question}
\theoremstyle{definition}
\newtheorem{remark}[theorem]{Remark}
\theoremstyle{remark}
\newcommand{\s}{\mathfrak{s}}
\newcommand{\TT}{\mathfrak{t}}
\newcommand{\tlambda}{{\tilde\lambda}}
\newcommand{\spinc}{{\mbox{spin$^c$} }}
\newcommand{\zee}{\mathbb{Z}}
\newcommand{\arr}{\mathbb{R}}
\newcommand{\enn}{\mathbb{N}}
\newcommand{\cue}{\mathbb{Q}}
\newcommand{\cee}{\mathbb{C}}
\newcommand{\F}{\mathbb{F}}
\renewcommand{\H}{\mathcal{H}}
\newcommand{\eff}{\mathcal{F}}
\newcommand{\K}{{\mathcal K}}
\newcommand{\T}{{\mathcal T}}
\renewcommand{\S}{{\mathcal S}}
\renewcommand{\L}{{\mathcal L}}
\newcommand{\dbar}{{\overline{\partial}}}
\newcommand{\ts}{\textstyle}
\newcommand{\tk}{{\otimes k}}
\newcommand{\tF}{{\widetilde F}}
\newcommand{\hfhat}{\widehat{HF}}
\newcommand{\hfkhat}{\widehat{HFK}}
\newcommand{\cptwobar}{{\overline{\cee P}^2}}
\newcommand{\tC}{{\widetilde{C}}}
\newcommand{\tomega}{{\tilde\omega}}
\DeclareMathOperator{\tb}{tb}
\DeclareMathOperator{\rot}{rot}
\DeclareMathOperator{\Int}{Int}
\DeclareMathOperator{\Skel}{Skel}
\DeclareMathOperator{\st}{St}
\DeclareMathOperator{\symp}{symp}
\newcommand{\hmbar}{{\overline{HM}}}
\newcommand{\hmfrom}{\widehat{HM}}
\DeclareFontFamily{U}{mathx}{\hyphenchar\font45}
\DeclareFontShape{U}{mathx}{m}{n}{
      <5> <6> <7> <8> <9> <10>
      <10.95> <12> <14.4> <17.28> <20.74> <24.88>
      mathx10
      }{}
\DeclareSymbolFont{mathx}{U}{mathx}{m}{n}
\DeclareMathAccent{\widecheck}{0}{mathx}{"71}
\newcommand{\hmto}{\widecheck{HM}}
\theoremstyle{plain}
\begin{document}

\title{On {W}einstein domains in symplectic manifolds}

\author{Thomas E. Mark}

\author{B\"{u}lent Tosun}

\address{Department of Mathematics \\ University of Virginia \\ Charlottesville \\ VA}

\email{tmark@virginia.edu}

\address{Department of Mathematics\\ University of Alabama\\Tuscaloosa\\AL}

\email{btosun@ua.edu}

\begin{abstract} We prove that a Weinstein domain symplectically embedded in a closed symplectic manifold always admits symplectic hypersurfaces in its complement, possibly after a deformation. As a consequence, we obtain an obstruction for a closed 3-dimensional manifold to arise as the boundary of a Weinstein domain in a class of symplectic 4-manifolds that includes many symplectic rational surfaces. A particular application is that no Brieskorn homology sphere bounds a Weinstein domain symplectically embedded in a rational surface diffeomorphic to $S^2\times S^2$ or to $\cee P^2\# k \cptwobar$, for any $k\leq 7$, despite the fact that many Brieskorn spheres bound Stein domains holomorphically embedded in these rational surfaces. Several families of Brieskorn spheres are obtained that do not bound a Weinstein domain in any 4-manifold with a ``positive'' symplectic structure. Such Weinstein domains do exist in certain positive symplectic rational surfaces when $k\geq 8$, though their topology is significantly constrained. 
\end{abstract}

\maketitle

\section{Introduction}

\subsection{Background and statement of some results}\label{introsec1}

If $(X,\omega)$ is a symplectic manifold, a {\it contact type hypersurface} in $X$ is a smooth codimension 1 submanifold $Y$ such that there exists a vector field $V$ in a neighborhood of $Y$ that is transverse to $Y$, and is also Liouville in the sense that $\L_V\omega = \omega$. In this case $V$ induces a contact structure $\xi = \ker(\iota_V\omega|_Y)$ on $Y$ that is independent of the choice of $V$. If $Y$ is separating, then the closure of the component of $X - Y$ for which $V$ is outward-directed is a strong symplectic filling of $(Y, \xi)$ that we call the ``convex side'' of $Y$. 

The contact type condition was introduced by Weinstein \cite{weinstein79}, who observed that under certain circumstances the characteristic foliation of a contact type hypersurface admits a closed orbit. These existence results for periodic orbits were greatly generalized and extended in subsequent years: for example it was shown by Viterbo \cite{viterbo87} that for any contact type hypersurface in standard symplectic $\arr^{2m}$, the Reeb vector field---which generates the characteristic foliation---has at least one closed orbit; and in dimension 3, celebrated work of Taubes \cite{taubesweinconj} shows that a Reeb vector field always has a closed orbit. In another direction, since contact type hypersurfaces always have a standard symplectic neighborhood, they are particularly useful in constructions of symplectic manifolds that involve cutting and gluing along hypersurfaces. This paper is motivated by the following question:

\begin{question}\label{CTquestion} If $(X,\omega)$ is a symplectic $n$-manifold, which smooth closed $(n-1)$-manifolds $Y$ admit an embedding in $X$ as a separating contact type hypersurface?
\end{question} 	

A fundamental case is that of $X = \arr^{2m}$ with its standard symplectic structure, which by Darboux's theorem can be seen as the ``local version'' of this question. Work of Cieliebak and Eliashberg \cite{CE-qconvex} shows that if $Y$ is an embedded closed hypersurface in $\arr^{2m}$ then, as we review in Section \ref{introsec2} below, $Y$ is isotopic to a contact type hypersurface whenever the domain $W\subset \arr^{2m} = \cee^m$ bounded by $Y$ satisfies the complex-analytic condition of {\it rational convexity}. When $2m>4$, Cieliebak-Eliashberg show that the latter is essentially a topological condition on $W$, and many examples can be obtained---thus the local theory of contact type hypersurfaces is quite rich. Here we will focus on the case $\dim(X) = 4$, where the local theory appears to be much more constrained. In fact, the following conjecture appears in the work of Weimin Chen:

\begin{conjecture}[\cite{weimin}, Conjecture 5.1]\label{chenconj} If $Y\subset \arr^4$ is a hypersurface with $b_1(Y) = 0$, and the domain $W$ bounded by $Y$ is rationally convex, then $Y$ is diffeomorphic to $S^3$.
\end{conjecture}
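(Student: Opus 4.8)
The plan is to transport the problem from the open manifold $\cee^2=\arr^4$ into a closed symplectic manifold and then feed it to the embedding theorem of this paper. First I would compactify: standard symplectic $\arr^4=\cee^2$ is the complement of the line at infinity in $(\cee P^2,\omega_{FS})$, where the Fubini--Study form restricts to the standard structure on the affine chart. By the theorem of Cieliebak and Eliashberg recalled above \cite{CE-qconvex}, rational convexity of $W$ forces $Y$ to be isotopic to a contact type hypersurface, so after this isotopy its convex side is a Weinstein domain $W'\subset\cee^2\subset\cee P^2$ with $\partial W'$ diffeomorphic to $Y$ and $b_1(\partial W')=0$. Chen's conjecture is thereby reduced to a closed-manifold statement: a rational homology sphere that bounds a Weinstein domain symplectically embedded in $\cee P^2$ must be diffeomorphic to $S^3$.

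Next I would apply the main embedding theorem. After a deformation of the ambient form---which changes neither the diffeomorphism type of $\cee P^2$ nor any of the smooth invariants used below---the complement $C=\cee P^2\setminus\mathrm{int}(W')$ contains a closed symplectic hypersurface $\Sigma$. Since $H_2(\cee P^2;\zee)=\zee\langle H\rangle$ and $\Sigma$ is symplectic, it pairs positively with $[\omega]$, so $[\Sigma]=dH$ with $d\ge 1$ and $\Sigma$ has strictly positive self-intersection $d^2$. A Mayer--Vietoris computation, using that $b_1(Y)=0$ makes $H_*(Y;\Q)$ agree with that of $S^3$ and that $[\Sigma]\ne 0$ already generates $H_2(\cee P^2;\Q)$, then shows that all of the rational second homology of $\cee P^2$ is carried by the cap $C$; hence the Weinstein domain $W'$ on the other side of $Y$ is a \emph{rational homology ball}. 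As $W'$ is moreover homotopy equivalent to a $2$-complex, this upgrades to $H_2(W';\zee)=0$ and $H_1(W';\Q)=0$.

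With $Y$ bounding the (Weinstein) rational homology ball $W'$, I would extract a Floer-theoretic obstruction. The Ozsv\'ath--Szab\'o correction terms of $Y$ are then severely constrained: the $d$-invariants of the spin$^c$ structures on $Y$ that extend over $W'$ must all vanish---for $Y$ an integer homology sphere this is simply $d(Y)=0$, since $d$ descends to a homomorphism out of the homology cobordism group and $[Y]=0$ there. This alone already rules out every Brieskorn sphere with nonzero correction term, which is exactly the mechanism the paper uses on the negative side. To push toward the full conclusion one would then bring in the Weinstein structure itself, using the nonvanishing of the \OS contact invariant of $(Y,\xi)$ and its compatibility with the symplectic cap $C$ to try to force $W'$ to be the standard ball $B^4$ and $\xi$ to be the standard tight structure.

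The hard part is precisely this last upgrade, from strong homological and Floer-theoretic constraints to the honest diffeomorphism $Y\cong S^3$. Gauge and Floer theory detect $Y$ only through its invariants, whereas the conjecture asserts a topological rigidity: one must rule out all exotic rational homology spheres that bound Weinstein rational homology balls yet carry the Floer data of $S^3$, and in particular bridge the gap between the weak hypothesis $b_1(Y)=0$ and the integer homology sphere case where the correction-term obstruction is cleanest. I therefore expect this route to settle the obstruction side decisively---excluding Brieskorn spheres and whole families of rational homology spheres---while the passage to the full statement of the conjecture remains the delicate point that resists a purely formal argument.
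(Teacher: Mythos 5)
The statement you are addressing is not a theorem of this paper: it is Weimin Chen's Conjecture 5.1, quoted as an open conjecture, and the paper offers only partial evidence for it (ruling out Brieskorn spheres and related families via the obstruction $c_{red}(\xi)\neq 0$ from \cite{MTctype}). So there is no proof in the paper to compare yours against, and your proposal---as you yourself concede in the final paragraph---does not close the gap either. But the difficulty is worse than the ``delicate last upgrade'' you describe: your very first move, the reduction to a closed-manifold statement, replaces the conjecture by a statement that is \emph{false}. Once you rescale $W$ into a Darboux ball and regard it as a Weinstein domain in $\cee P^2$ (note, incidentally, that the Fubini--Study form does not restrict to the standard form on the affine chart---you must shrink $W$ conformally and use Darboux's theorem; that part is fixable), the only hypotheses left are ``Weinstein domain symplectically embedded in $\cee P^2$ with $b_1(\partial W')=0$,'' and the paper itself exhibits counterexamples to that reduced statement: the Lagrangian pinwheels $\L_{p,q}$ of Evans--Smith have Weinstein neighborhoods symplectically embedded in $\cee P^2$ whose boundaries are the lens spaces $L(p^2,pq-1)$, which have $b_1=0$ and are not diffeomorphic to $S^3$.

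Moreover, these lens spaces satisfy every constraint your argument produces: they bound rational homology balls (so your Mayer--Vietoris step and the vanishing of the correction terms on spin$^c$ structures extending over $W'$ are consistent with them), and they are $L$-spaces, so $c_{red}(\xi)=0$ and the conclusion of Theorem \ref{obstrthm} is vacuous for them. Consequently no argument using only the data surviving your reduction can conclude $Y\cong S^3$; any proof of Conjecture \ref{chenconj} must exploit the fact that $W$ lies in $\arr^4$ (equivalently, in the complement of a line in $\cee P^2$), not merely in $\cee P^2$. Even restricting to integer homology spheres, the endgame you sketch is beyond current technology: the constraints $d(Y)=0$ and $c_{red}(\xi)=0$, indeed even the much stronger condition $HF_{red}(Y)=0$, cannot force $Y\cong S^3$, since whether there exist irreducible integer homology sphere $L$-spaces other than $S^3$ and the Poincar\'e sphere is itself a longstanding open problem, as the paper notes. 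What your intermediate steps do correctly recover is the paper's actual obstruction machinery (Theorem \ref{obstrthm} combined with correction-term arguments), which is precisely how the authors obtain \emph{evidence} for the conjecture---but evidence, not a proof, is all this route can yield.
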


Indeed, no examples of contact type hypersurfaces in $\arr^4$ having vanishing first Betti number are known (assuming rational convexity or not), except those diffeomorphic to $S^3$. Direct evidence for Conjecture \ref{chenconj} was obtained in \cite{MTctype}, where the authors introduced an obstruction for a 3-manifold to arise as a contact type hypersurface in $\arr^4$ and used it to rule out various 3-manifolds including all naturally-oriented Brieskorn homology spheres.

The goal of this work is to begin to consider Question \ref{CTquestion} in a broader context than $\arr^4$, particularly the case where $X$ is a closed symplectic 4-manifold. We focus on a stronger condition on a hypersurface $Y\subset X$ than the contact type condition, namely that $Y$ arise as the boundary of a {\it Weinstein domain} symplectically embedded in $X$. This condition can be seen as a symplectic analog of rational convexity, and from this point of view the following is a generalization of \cite[Question 1.2]{NemSie}:

\begin{question}\label{genweinsteinquestion} If $(X,\omega)$ is a symplectic $n$-manifold, which smooth $n$-manifolds $W$ admit an embedding in $X$ so that $(W, \omega|_W)$ has the structure of a Weinstein domain?
\end{question}

Theorem 1.5(b) of \cite{CE-qconvex} shows that the two conditions (rational convexity and Weinstein) are equivalent up to isotopy for domains in standard $\arr^{2m}$.  The main technical result of this paper, Theorem \ref{ratconvthm} below, can be seen as proving one direction of a similar equivalence in the context of closed symplectic manifolds. The definition of a Weinstein domain is reviewed in Section \ref{introsec2}; the boundary of a Weinstein domain in $X$ is in particular a contact type hypersurface.

In the following theorem, we say that a closed symplectic 4-manifold $(X,\omega)$ is {\it positive} if 
\[
\langle c_1(X,\omega)\cup [\omega], [X]\rangle >0,
\]
where $c_1(X,\omega)$ denotes the first Chern class of the tangent bundle of $X$ equipped with an $\omega$-compatible almost-complex structure. We may omit either $X$ or $\omega$ from the notation $c_1(X,\omega)$ if no confusion will arise. This condition is quite restrictive: the only minimal positive symplectic 4-manifolds are diffeomorphic to $\cee P^2$, $S^2\times S^2$, or a 2-sphere bundle over a higher-genus surface \cite{liu96, OhtaOno96}, and in particular all such $(X,\omega)$ have $b^+(X) = 1$. If $(X,\omega)$ is positive, then by performing a symplectic blowup in a sufficiently small ball, it is clear that the manifold $X\# \cptwobar$ also admits a positive symplectic structure. By a {\it  rational surface} we will mean a smooth 4-manifold diffeomorphic to $S^2\times S^2$ or to $\cee P^2 \# k \cptwobar$ for some $k\geq 0$; thus any rational surface admits some positive symplectic structure. In fact every symplectic structure on a rational surface diffeomorphic to $S^2\times S^2$ or $\cee P^2 \# k\cptwobar$ with $k\leq 9$ is positive (cf. Proposition \ref{smallposprop}), though in general a rational surface may also admit non-positive structures.


Using obstructions described below, we find significant restrictions for a Brieskorn sphere to bound a Weinstein domain symplectically embedded in a positive symplectic rational surface. Here and throughout, we assume Brieskorn spheres are equipped with their natural orientation as links of singularities, and that this orientation agrees with that induced as the boundary of the convex side of a contact type embedding.

\begin{theorem}\label{nonembthm} Suppose $Y = \Sigma(p_1,\ldots,p_n)$ is a Brieskorn integer homology sphere that embeds as a contact type hypersurface in a positive symplectic rational surface $(X,\omega)$, such that the convex side $(W, \omega|_W)$ is a Weinstein domain. Then:
\begin{enumerate}
\item The intersection form of $W$ is nontrivial, negative definite, and even.
\item The first Chern class $c_1(X,\omega)$ restricts trivially to $W$.
\item The second Betti number of $W$ is $b_2(W) = 4d(Y)>0$, where $d(Y)$ is the Heegaard Floer $d$-invariant.
\end{enumerate}
\end{theorem}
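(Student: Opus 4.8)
The plan is to cut the closed manifold open along $Y$, use Theorem~\ref{ratconvthm} to locate the single positive direction of the intersection form in the complementary cap, and then feed the canonical spin$^c$ structure of the Weinstein filling into the \OS $d$-invariant inequality, using the symplectic data to force that inequality to be sharp. First I would write $X = W\cup_Y Z$ with $Z = X\setminus\Int(W)$ the complementary (concave) cap. Since $Y$ is an integer homology sphere, Mayer--Vietoris gives $H_2(X;\Z)\cong H_2(W;\Z)\oplus H_2(Z;\Z)$ and an orthogonal splitting of intersection forms $Q_X\cong Q_W\oplus Q_Z$ with each summand unimodular. Because $W$ is Weinstein, $\omega|_W = d\lambda$ is exact, so over $\R$ the class $[\omega]$ lies in the $Z$-summand and $[\omega]^2 = ([\omega]|_Z)^2 > 0$. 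Applying Theorem~\ref{ratconvthm} (after the permitted deformation, which does not change $c_1$, $b_2$, or $d(Y)$) produces a symplectic hypersurface $\Sigma\subset Z$ Poincaré dual to a positive multiple of $[\omega]$; hence $[\Sigma]^2>0$ with $[\Sigma]$ supported in $Z$, so $b^+(Z)\ge 1$. Positivity of $(X,\omega)$ forces $b^+(X)=1$, and since $b^+(X)=b^+(W)+b^+(Z)$ we get $b^+(W)=0$. As $Q_W$ is unimodular this makes it negative definite, establishing the definiteness in (1).

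Next I would invoke the \OS inequality for a negative definite filling of an integer homology sphere: for every spin$^c$ structure $\s$ on $W$ one has $c_1(\s)^2 + b_2(W)\le 4d(Y)$. The Weinstein structure singles out the canonical spin$^c$ structure $\s_{\mathrm{can}}$, with $c_1(\s_{\mathrm{can}}) = c_1(X,\omega)|_W$. The symplectic input is that this inequality is \emph{sharp} for $\s_{\mathrm{can}}$: the contact invariant of $(Y,\xi)$ is nonzero because $\xi$ is Weinstein fillable, and it sits in the top grading of $\hfhat(-Y)$; tracing it through the cobordism map of the filling and the grading-shift formula yields the equality $(c_1(X,\omega)|_W)^2 + b_2(W) = 4d(Y)$. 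Subtracting this from the \OS inequality for a general $\s$ shows that $\s_{\mathrm{can}}$ \emph{maximizes} $c_1(\s)^2$ over all spin$^c$ structures on the negative definite lattice $Q_W$.

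It then remains to prove (2), that $c_1(X,\omega)|_W = 0$, after which everything else follows formally: since $Q_W$ is negative definite, $c_1(X,\omega)|_W$ vanishes as soon as its square does, and $0 = c_1(X,\omega)|_W$ is then a characteristic vector of $Q_W$, forcing $Q_W$ to be even (the evenness in (1)); substituting into the sharp equality gives $b_2(W) = 4d(Y)$, which is positive because Brieskorn spheres in their natural orientation have $d(Y)>0$, yielding (3) and the nontriviality in (1). To obtain $(c_1(X,\omega)|_W)^2 = 0$ I would argue that the maximizer $\s_{\mathrm{can}}$ must realize the value $0$; since the maximal characteristic square of a negative definite unimodular form equals $0$ exactly when the form is even, the content of the claim is that positivity of $(X,\omega)$ forbids an odd filling.

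The step I expect to be the \emph{main obstacle} is precisely this passage from the sharp $d$-invariant equality to $c_1(X,\omega)|_W = 0$, equivalently to evenness of $Q_W$. The \OS inequality alone only bounds $c_1(\s)^2$ from above and is perfectly consistent with an odd, diagonal filling of strictly larger second Betti number, so ruling this out is where positivity must be used in an essential way. My approach would be to combine Taubes' description of Seiberg--Witten basic classes with the wall-crossing analysis for $b^+=1$: positivity places the symplectic canonical class in the distinguished chamber where its invariant is $\pm 1$, and neck-stretching across $Y$ identifies the relative invariant of the Weinstein filling with the contact class at the top grading. Making this interaction precise---and extracting from it that the canonical spin$^c$ structure is the spin structure, so that $(c_1(X,\omega)|_W)^2 = 0$---is the crux; by comparison the Mayer--Vietoris splitting and the lattice bookkeeping are routine.
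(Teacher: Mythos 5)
Your Mayer--Vietoris bookkeeping is correct, and using exactness of $\omega|_W$ together with the divisor from Theorem \ref{ratconvthm} to force $b^+(W)=0$ is a perfectly good (slightly different) route to negative definiteness. However, the proposal has genuine gaps at its load-bearing steps, and all of them trace to the same missing ingredient: the conclusion $c_{red}(\xi)=0$ of Theorem \ref{obstrthm}, which you never invoke. First, your ``sharpness'' step is unjustified. Nonvanishing of $c^+(\xi)$ (which fillability gives) does not place the contact invariant in any particular position in $HF^+(-Y)$; the assertion that it ``sits in the top grading of $\hfhat(-Y)$'' is not a property of fillable contact structures, and tracing $c^+(\xi)$ through the cobordism map of the filling only reproduces the grading formula \eqref{contactgrading}, not the equality $h(\xi)=d(-Y)$. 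The equality $(c_1(X,\omega)|_W)^2+b_2(W)=4d(Y)$ is equivalent to $c^+(\xi)$ lying in the tower $\T^+_{d(-Y)}$, i.e.\ to $c_{red}(\xi)=0$, and that is exactly what Theorem \ref{obstrthm} supplies (via the Donaldson divisor, adjunction, the circle-bundle computation of \cite{OS:integersurgery}, and Echeverria's naturality). You construct the divisor but use it only for the $b^+$ count. Indeed, Gompf's Stein domains holomorphically embedded in $\cee P^2\#\cptwobar$ satisfy every hypothesis you actually use in this step, which shows fillability alone cannot carry it.

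Second, the step you flag as the crux, $c_1(X,\omega)|_W=0$, is handled in the paper not by your lattice-maximization route nor by Seiberg--Witten wall-crossing, but by quoting Ozsv\'ath--Stipsicz--Szab\'o \cite{OSS:planar}: replacing the Weinstein structure on $W$ by a deformation-equivalent Stein structure $J$ (using \cite{CE}), a Stein filling of an integer homology sphere with $c_1(J)\neq 0$ has $c_{red}(\xi)\neq 0$, contradicting Theorem \ref{obstrthm}. So in the paper item (2) comes \emph{first}, directly from $c_{red}(\xi)=0$; evenness follows since $c_1(J)$ is characteristic for $Q_W$; and (3) then follows from the grading formula with $c_1=0$, so the general \OS negative-definite inequality is never needed. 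Finally, your claim that Brieskorn spheres have $d(Y)>0$ in their natural orientation is false: $d(\Sigma(2,3,7))=0$, $d(\Sigma(p,q,npq+1))=0$, and every Brieskorn sphere bounding a rational homology ball has $d=0$. Positivity of $d(Y)$ is itself a conclusion requiring the symplectic hypotheses: $d(Y)\geq 0$ because $Y$ bounds a negative definite plumbing; and if $d(Y)=0$ then Elkies' theorem forces the plumbing form to be diagonalizable, whence Theorem \ref{diagthm} says \emph{every} strongly fillable contact structure on $Y$ has $c_{red}\neq 0$, again contradicting Theorem \ref{obstrthm}. Without this argument, both the nontriviality in (1) and the positivity in (3) are unproven in your proposal.
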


As a consequence we obtain:

\begin{corollary}\label{fancybrieskorncor} Let $(X,\omega)$ be a symplectic manifold diffeomorphic to $S^2\times S^2$ or to $\cee P^2\# k \cptwobar$, for some $k\leq 7$. Then no Weinstein domain symplectically embedded in $X$ has boundary diffeomorphic to any Brieskorn homology sphere.
\end{corollary}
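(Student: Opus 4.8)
The plan is to deduce this from Theorem~\ref{nonembthm} by a purely numerical squeeze on the rank of the intersection form of the would-be Weinstein domain. First I would record that for all the manifolds named in the statement every symplectic form is positive: since $k\le 7\le 9$, Proposition~\ref{smallposprop} applies (and $S^2\times S^2$ is likewise covered), so each $(X,\omega)$ is a positive symplectic rational surface and in particular has $b^+(X)=1$. The corollary is to be understood with the orientation convention fixed in the introduction, so that a convex boundary diffeomorphic to a Brieskorn sphere carries its natural orientation; this places any putative Weinstein domain $W\subset X$ with $\partial W=Y=\Sigma(p_1,\ldots,p_n)$ squarely in the hypotheses of Theorem~\ref{nonembthm}.

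Next I would extract the shape of $Q_W$. By Theorem~\ref{nonembthm}(1) the intersection form of $W$ is nontrivial, even, and negative definite. Because $\partial W=Y$ is an integral homology sphere, the long exact sequence of the pair $(W,\partial W)$ together with Poincar\'e--Lefschetz duality shows that $Q_W$ is unimodular, hence nondegenerate on $H_2(W;\Z)/\mathrm{tors}$. A nonzero even unimodular definite lattice has rank divisible by $8$ (the signature of an even unimodular form is a multiple of $8$, and for a definite form the signature is $\pm$ the rank), so $b_2(W)\ge 8$. Note that only nontriviality is used here; the sharper value $b_2(W)=4d(Y)$ from part~(3) is not needed for the corollary.

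The third step is the embedding constraint. Let $i_*\colon H_2(W;\Q)\to H_2(X;\Q)$ be induced by inclusion. Since intersection numbers of classes supported in $\mathrm{int}\,W$ are computed locally, one has $Q_X(i_*a,i_*b)=Q_W(a,b)$ for all $a,b$; nondegeneracy of $Q_W$ then forces $i_*$ to be injective, and its image is a negative definite subspace of $(H_2(X;\Q),Q_X)$. Hence $b_2(W)=\dim i_*H_2(W;\Q)\le b^-(X)$. For $X\cong \cee P^2\# k\cptwobar$ this reads $b_2(W)\le k\le 7$, and for $X\cong S^2\times S^2$ it reads $b_2(W)\le 1$; in either case this contradicts $b_2(W)\ge 8$, and no such $W$ can exist.

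The real content lives entirely in Theorem~\ref{nonembthm}; within the corollary the only points that must be handled with care are the two rank bounds $8\le b_2(W)\le b^-(X)$ and the orientation bookkeeping that makes the theorem applicable. The latter is the subtlety I would be most careful about: both the negative-definiteness and the inequality $b_2(W)=4d(Y)>0$ depend on orienting $Y$ as the boundary of the convex side, so I would make explicit that the diffeomorphism onto a Brieskorn sphere is taken compatibly with that convention, noting that the reversed-orientation case falls outside the meaning of ``bounding a Weinstein domain'' under this setup.
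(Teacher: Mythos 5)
Your proposal is correct and follows essentially the same route as the paper: apply Theorem~\ref{nonembthm} to get a nontrivial, even, negative definite (and, via the homology-sphere boundary, unimodular) intersection form on $W$, then squeeze $8\le b_2(W)\le b^-(X)\le 7$. The only cosmetic difference is that you obtain $b_2(W)\ge 8$ from the lattice-theoretic fact that a nonzero even unimodular definite form has rank divisible by $8$, which is exactly the ``alternative'' argument the paper itself records alongside its primary one ($b_2(W)=4d(Y)\ge 8$ since $d(Y)$ is even and positive).
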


We also obstruct certain families of Brieskorn spheres from bounding a Weinstein domain in {\it any} positive symplectic 4-manifold; see Theorem \ref{suitablebrieskornthm} and Theorem \ref{diagthm}.

In general, however, the bound $k\leq 7$ is sharp in Corollary \ref{fancybrieskorncor}. Indeed, it is well known that the Poincar\'e homology sphere $\Sigma(2,3,5)$ embeds in a rational elliptic surface $E(1) \cong \cee P^2\# 9\cptwobar$ as the (negatively oriented) boundary of a Gompf nucleus, which consists of a neighborhoood of a cusp fiber union a section of the elliptic fibration. This neighborhood can be taken to be symplectically concave (with respect to a symplectic form compatible with the fibration), and its complement $W$ is diffeomorphic to a plumbing of cotangent disk bundles over 2-spheres, plumbed according to the $E_8$ graph \cite{gompf:nuclei}. The latter admits a Weinstein structure that agrees near the boundary with the symplectic structure on the concave neighborhood (using, for example, the fact that the Poincar\'e sphere has just one fillable contact structure), hence the symplectic structure on the nucleus extends this Weinstein structure as a symplectic structure to all of $E(1)$. This shows $\Sigma(2,3,5)$ bounds a Weinstein domain in $\cee P^2\# 9\cptwobar$, and by blowing down the section contained in the nucleus (a symplectic sphere of self-intersection $-1$), we find $\Sigma(2,3,5)$ as the boundary of a Weinstein domain in $\cee P^2\# 8\cptwobar$. 

On the other hand, Theorem \ref{nonembthm} and Corollary \ref{fancybrieskorncor} are strictly {\it symplectic} results. Indeed, for any relatively prime $p,q\geq 2$, Gompf \cite[Theorem 4.1]{gompfemb} has exhibited holomorphic embeddings of a Stein domain $W_{p,q}$ in any chosen complex structure on $\cee P^2\#\cptwobar$, whose boundary is the Brieskorn sphere $\Sigma(p,q,pq+1)$; one anticipates many more such examples as $k$ grows. Corollary \ref{fancybrieskorncor} implies that it is not possible to arrange that these Stein domains are symplectically embedded, for any symplectic structure on the ambient manifold. 

In other words, there are many Brieskorn spheres that abstractly bound Weinstein (actually, Stein) domains with very simple topology---in Gompf's examples, just a 0-handle and a 2-handle---which embed smoothly (actually, holomorphically) in a rational ruled surface diffeomorphic to $\cee P^2\#\cptwobar$, but nevertheless the Brieskorn spheres do not bound any domains therein that are Weinstein with respect to the ambient symplectic structure.

 From the point of view of complex geometry, the results of \cite{BGS} mentioned below imply that Gompf's Stein domains are not rationally convex, regardless of the ambient complex algebraic structure.

\begin{remark} Gompf asks in \cite[Question 4.3]{gompfemb} whether it is possible to embed a Brieskorn sphere in a rational ruled surface as the boundary of a pseudoconvex (i.e. Stein) domain whose intersection matrix is $\langle +1\rangle$. Our results do not address this question, but Theorem \ref{nonembthm}(1) shows that the corresponding question for Brieskorn spheres bounding Weinstein domains, in any positive symplectic rational surface, has a negative answer. 
\end{remark}

The results above follow from a general obstruction for a contact manifold $(Y,\xi)$ to bound a Weinstein domain in a positive symplectic manifold. To state it, recall that Heegaard Floer theory associates to any closed oriented 3-manifold $Y$ a homology group $HF^+(Y)$, and to a positive contact structure $\xi$ on $Y$ an element $c^+(\xi)\in HF^+(-Y)$ where $-Y$ stands for $Y$ with the opposite of the given orientation. Here we take Floer homology with coefficients in $\F = \zee/2\zee$. While $HF^+(Y)$ is always infinite-dimensional over $\F$, it admits a finite-dimensional quotient $HF_{red}(Y)$, the {\it reduced} Heegaard Floer homology. For a contact structure $\xi$ on $Y$ we write $c_{red}(\xi)$ for the image of $c^+(\xi)$ under the quotient $HF^+\to HF_{red}$.

\begin{theorem}\label{obstrthm} Let $(Y,\xi)$ be a connected contact 3-manifold and $(X, \omega)$ a closed symplectic 4-manifold. Assume $(Y,\xi)$ admits an embedding in $X$ as a separating contact type hypersurface, with convex side $(W, \omega|_W)$.
\begin{itemize}
\item[a)] If there exists a connected, embedded symplectic surface $\Sigma \subset X - W$ of genus $g$, whose self-intersection satisfies $\Sigma\cdot \Sigma > \max\{0, 2g - 2\}$, then $c_{red}(\xi) = 0$.
\item[b)] If $(X,\omega)$ is positive, and $(W,\omega|_W)$ admits the structure of a Weinstein domain, then a surface as in (a) exists, possibly after a deformation of $W$ through Weinstein domains.
\end{itemize}
\end{theorem}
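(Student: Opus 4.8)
The plan is to let the symplectic surface $\Sigma$ serve as the bridge between the two parts: part (b) manufactures such a surface from the Weinstein and positivity hypotheses, and part (a) converts its existence into the vanishing of $c_{red}(\xi)$. For part (a) I would decompose $X = W\cup_Y Z$, where $Z = X-\mathrm{int}(W)$ is a strong symplectic cap for $(Y,\xi)$ containing $\Sigma$, and excise a Darboux ball from $Z$ to view it as a symplectic cobordism $\bar Z$ from $(Y,\xi)$ to the standard $(S^3,\xi_{std})$. The decisive feature of $\Sigma$ is that it is \emph{adjunction-violating}: because $\Sigma\cdot\Sigma > \max\{0,2g-2\}$ (so in particular $\Sigma\cdot\Sigma\ge 0$), no \spinc structure $\s$ on $\bar Z$ can satisfy the adjunction bound $|\langle c_1(\s),[\Sigma]\rangle| + \Sigma\cdot\Sigma \le 2g-2$. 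Recalling that $c_{red}(\xi)=0$ is equivalent to $c^+(\xi)$ lying in the image of the natural map $\pi\colon HF^\infty(-Y)\to HF^+(-Y)$, the target of the argument is to establish exactly this inclusion.

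The mechanism I would use is the adjunction inequality of \OnS\ for the maps induced by $\bar Z$: it forces every \spinc contribution to the $HF^+$ (equivalently reduced) part of the cobordism map to vanish, while leaving the $HF^\infty$ maps --- which are insensitive to $\Sigma$ --- free to survive. Through the commutative square relating the maps on $HF^\infty$ and $HF^+$ with $\pi$, this should channel $c^+(\xi)$ into $\mathrm{im}(\pi)$ rather than killing it outright, consistent with the theorem claiming only $c_{red}(\xi)=0$: the tower (i.e.\ $d$-invariant) part may persist, and indeed a surface with $\Sigma\cdot\Sigma > \max\{0,2g-2\}$ is by adjunction incompatible with $b^+(X)>1$ for symplectic $X$, so the relevant case is $b^+=1$. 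The main obstacle is tying $c^+(\xi)$ to the cap in a usable way: the naive naturality $c^+(\xi)=F^+_{\bar Z}(c^+(\xi_{std}))$ cannot be invoked unconditionally, since every contact manifold bounds some cap while $c_{red}$ does not always vanish. I would instead realize $c^+(\xi)$ from the filling side together with the gluing/pairing formula for $X$: a nonzero $c_{red}(\xi)$ would pair the relative invariant of $W$ against that of $Z$ to produce a nonvanishing (mixed, $b^+=1$) \OS invariant of $X$ in the canonical \spinc structure, whereas the adjunction inequality applied to $\Sigma$ forces all such invariants to vanish. The delicate $b^+=1$ bookkeeping --- tracking the image of $\pi$ rather than outright vanishing --- is where the real difficulty lies.

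For part (b) the construction is largely handed to the main technical theorem. Since $W$ is Weinstein and $(X,\omega)$ closed, Theorem~\ref{ratconvthm} supplies, after a deformation of $W$ through Weinstein domains, a closed symplectic hypersurface in the complement $X-W$; taking it of Donaldson type I may assume it is connected and Poincar\'e dual to $k[\omega]$ for some large integer $k$ (first perturbing $\omega$ so that $[\omega]$ is rational, which is harmless since positivity is an open condition). Writing $\Sigma$ for this surface, positivity yields at once
\[
\langle c_1(X,\omega),[\Sigma]\rangle = k\,\langle c_1(X,\omega)\cup[\omega],[X]\rangle > 0,
\qquad
\Sigma\cdot\Sigma = k^2\,[\omega]^2 > 0.
\]
By the adjunction formula for symplectic surfaces, $2g-2 = \Sigma\cdot\Sigma - \langle c_1(X,\omega),[\Sigma]\rangle$, so the first inequality is precisely $\Sigma\cdot\Sigma-(2g-2)>0$; together with $\Sigma\cdot\Sigma>0$ this is exactly the hypothesis of part (a). Thus positivity makes the self-intersection of a Donaldson hypersurface automatically large, and part (b) follows. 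The one point requiring care here is that the hypersurface of Theorem~\ref{ratconvthm} can be taken in the class $k\,\mathrm{PD}[\omega]$ with $k$ large enough to guarantee connectedness.
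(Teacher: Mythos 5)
Your part (b) is essentially the paper's own argument: Theorem \ref{ratconvthm} supplies a connected Donaldson divisor in the complement of a contraction of $W$, and positivity plus the adjunction formula for symplectic surfaces gives $\Sigma\cdot\Sigma > \max\{0,2g-2\}$. One caveat: the preliminary perturbation making $\frac{1}{2\pi}[\omega]$ rational must be performed \emph{relative to} $W$. The paper's Lemma \ref{perturblemma} chooses the perturbing closed form to vanish on $W$ (possible because $\omega|_W$ is exact, so $[\omega]$ lifts to $H^2(X,W)$); your appeal to "positivity is an open condition" keeps the new form positive but does not by itself keep $W$ symplectically embedded as a Weinstein domain for the perturbed form.

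Part (a) is where the genuine gap lies, and you have partly flagged it yourself. The cobordism $\bar Z$ obtained by deleting a Darboux ball from $X-\Int(W)$ is not a strong symplectic cobordism from $(Y,\xi)$ to $(S^3,\xi_{std})$: the radial Liouville field of the deleted ball points \emph{into} $\bar Z$ along the new $S^3$ boundary, so both ends of $\bar Z$ are concave and there is no convex end from which any contact-invariant naturality could start (as you observe, if such naturality held for arbitrary caps, every strong filling would force $c_{red}=0$, which is false). Your fallback --- pairing relative invariants of $W$ and $Z$ to produce a nonvanishing closed invariant and contradicting adjunction vanishing --- cannot be carried out in precisely the only case that matters: as you note, a surface with $\Sigma\cdot\Sigma>\max\{0,2g-2\}$ already excludes $b^+(X)\geq 2$, and when $b^+(X)=1$ there is no mixed invariant, the pairing of relative invariants is not a diffeomorphism invariant (everything is chamber-dependent), and positive symplectic $4$-manifolds are rational or ruled, whose closed invariants vanish in the relevant chambers anyway --- so no contradiction is available from that direction. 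Your closing admission that the "$b^+=1$ bookkeeping is where the real difficulty lies" concedes that the core of the proof is missing.

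What the paper does instead, and what your outline misses, is to cap off $\Sigma$ rather than a ball. Since $\Sigma\cdot\Sigma>0$, a tubular neighborhood $D$ of $\Sigma$ is symplectically \emph{concave} \cite{Gay2003}, so $Z = X - (\Int(W)\sqcup\Int(D))$ is an honest strong symplectic cobordism from $(Y,\xi)$ to $(S,\eta)$, where $S=-\partial D$ is a circle bundle over $\Sigma$ of degree $-\Sigma\cdot\Sigma<2-2g$. The inequality $\Sigma\cdot\Sigma>2g-2$ enters exactly here: by the Ozsv\'ath--Szab\'o large-degree circle bundle computation \cite[Theorem 5.6]{OS:integersurgery}, $HF_{red}(-S)=0$, so $c(\eta)$ automatically lies in the image of $HF^\infty(-S)\to HF^+(-S)$. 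Echeverria's naturality of the contact invariant under strong symplectic cobordisms, proved in monopole Floer homology \cite{mariano}, together with the equivariance of cobordism maps with respect to $HF^\infty\to HF^+$ and the monopole/Heegaard Floer isomorphisms preserving the contact class, then transports this property to $c(\xi)$, i.e.\ $c_{red}(\xi)=0$. Your instinct to "channel $c^+(\xi)$ into $\mathrm{im}(\pi)$ rather than kill it" is the right target, but the mechanism that achieves it is the concave circle-bundle cap plus naturality, not adjunction vanishing for cobordism maps anchored at $S^3$.
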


For a given 3-manifold $Y$, the constraint $c_{red}(\xi) = 0$ in Theorem \ref{obstrthm} may obstruct some contact structures on $Y$ from bounding Weinstein domains in positive symplectic manifolds but not others. Indeed, this situation arises for certain Brieskorn spheres, giving rise to Theorem \ref{nonembthm} as stated. On the other hand, we can exhibit infinite families of Brieskorn spheres that do not bound Weinstein domains in any positive symplectic manifold: a sample result is as follows.

\begin{theorem}\label{suitablebrieskornthm} Let $p, q\geq 2$ be relatively prime, and let $n$ be a positive integer. Let $Y$ be an oriented 3-manifold diffeomorphic to $\Sigma(p,q, npq+1)$ (for any $n\geq 1)$ or to $\Sigma(p,q,npq-1)$ (for any $n\geq 2$). If $(X,\omega)$ is a positive symplectic 4-manifold, then no Weinstein domain in $X$ has boundary orientation-preserving diffeomorphic to $Y$.
\end{theorem}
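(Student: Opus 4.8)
The plan is to combine the general obstruction in Theorem~\ref{obstrthm} with a computation of the contact invariant for the relevant Brieskorn spheres. The key observation is that Theorem~\ref{suitablebrieskornthm} asks to obstruct \emph{every} contact structure on $Y$ from bounding a Weinstein domain in \emph{any} positive symplectic $4$-manifold, so by Theorem~\ref{obstrthm}(b) it suffices to prove the purely Floer-theoretic statement that $c_{\mathrm{red}}(\xi) \neq 0$ leads to a contradiction---more precisely, that for these particular Brieskorn spheres the reduced contact invariant $c_{\mathrm{red}}(\xi)$ is \emph{nonzero} for every fillable (in particular every Weinstein-fillable) contact structure $\xi$ on $Y$. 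Indeed, if $(W,\omega|_W)$ were a Weinstein domain with boundary $(Y,\xi)$ inside a positive $(X,\omega)$, then part (b) produces, after deformation, an embedded symplectic surface $\Sigma \subset X - W$ with $\Sigma\cdot\Sigma > \max\{0, 2g-2\}$, whence part (a) forces $c_{\mathrm{red}}(\xi) = 0$, contradicting nonvanishing.

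So the first and main step is to establish nonvanishing of $c_{\mathrm{red}}$. The natural route is via the relationship between the contact invariant of a fillable contact structure and the $d$-invariant, together with the structure of $HF^+$ for Seifert-fibered spaces. Brieskorn spheres $\Sigma(p,q,r)$ are Seifert fibered over $S^2$, and their Heegaard Floer homology is computable from the Seifert data by the \OS\ algorithm for plumbed manifolds; the orientation convention fixed in the excerpt (natural orientation as links of singularities, agreeing with the convex-side boundary) means we must track whether $Y$ bounds a \emph{positive} or \emph{negative} definite canonical plumbing, which governs whether $c_{\mathrm{red}}$ can vanish. For the families $\Sigma(p,q,npq\pm 1)$, I would exploit the fact that these arise naturally from surgery descriptions (e.g. as boundaries of the Stein/Gompf-type domains referenced earlier, or via $\pm 1/n$-type surgeries on torus knots) to pin down the sign of $d(Y)$ and to show $HF_{\mathrm{red}}(-Y) \neq 0$ with the contact class landing in a nonzero summand.

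The precise mechanism I expect is the following. Any Weinstein (hence Stein) filling gives rise, via the filling, to constraints on $c^+(\xi)$: for a fillable contact structure the invariant $c^+(\xi)$ is nonzero, and one wants its reduced image to survive. The key technical input is a vanishing/nonvanishing dichotomy governed by the $d$-invariant and the ``$U$-tower'' structure of $HF^+$. For Brieskorn spheres of the form $\Sigma(p,q,npq+1)$ (with $n\geq 1$) or $\Sigma(p,q,npq-1)$ (with $n\geq 2$), I would verify by direct computation that $d(Y) > 0$ in the relevant orientation, which by Theorem~\ref{nonembthm}(3) would already be inconsistent with the combinatorial constraints \emph{if} a Weinstein embedding existed, but more directly I would show that the sign of the correction term forces $c_{\mathrm{red}}(\xi)\neq 0$ for every tight contact structure carried by $Y$. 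The families $npq\pm1$ are chosen precisely because the Seifert invariants make this computation uniform in $n$: the local data at the three singular fibers is fixed by $(p,q)$ and the third multiplicity $npq\pm 1$ contributes in a controlled, monotone way.

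The main obstacle will be the Floer-homology computation of $c_{\mathrm{red}}$ and its sign, uniformly across the infinite families parametrized by $n$. Two subtleties demand care. First, orientation: because the contact invariant lives in $HF^+(-Y)$, one must be scrupulous about which orientation of the Brieskorn sphere is the link orientation and which is the convex boundary; a sign error here inverts the whole argument, so I would fix conventions by cross-checking against a known case such as $\Sigma(2,3,5)$ from the $E(1)$ discussion. Second, one must rule out \emph{all} contact structures, not just the canonical/Milnor-fillable one---so rather than computing $c^+$ for a single $\xi$, I would argue at the level of the group $HF_{\mathrm{red}}(-Y)$ and the position of the $d$-invariant, showing that the necessary condition $c_{\mathrm{red}}(\xi)=0$ is incompatible with the computed Floer package regardless of $\xi$. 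Once this nonvanishing is in hand, the deduction from Theorem~\ref{obstrthm}(a)--(b) is immediate and formal, so essentially all the work is concentrated in this single Floer-theoretic input.
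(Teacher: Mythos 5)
Your reduction is the same as the paper's: by Theorem \ref{obstrthm} it suffices to show these Brieskorn spheres are ``suitable,'' i.e.\ that $c_{red}(\xi)\neq 0$ for every strongly fillable contact structure $\xi$. The gap is in the mechanism you propose for proving suitability, which does not work. You claim you would ``verify by direct computation that $d(Y)>0$'' and that ``the sign of the correction term forces $c_{red}(\xi)\neq 0$ for every tight contact structure.'' Both claims fail. First, for the family $\Sigma(p,q,npq+1)$ the $d$-invariant is \emph{zero} (N\'emethi, Tweedy), not positive; the paper's argument for this family runs in exactly the opposite direction: $d(Y)=0$ forces (via Elkies' theorem and \cite[Corollary 9.7]{OS:grading}) every negative-definite filling, in particular the resolution plumbing $X_\Gamma$, to have diagonalizable intersection form, and then Theorem \ref{diagthm} (an external input from \cite{MTctype}) gives $c_{red}(\xi)\neq 0$ for all strongly fillable $\xi$. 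Second, positivity of the $d$-invariant cannot force nonvanishing of $c_{red}$ for all contact structures: $\Sigma(2,3,5)$ has $d=2>0$ yet $HF_{red}(-\Sigma(2,3,5))=0$, so \emph{every} contact structure on it has $c_{red}=0$; likewise Proposition \ref{suitableprop}(3) exhibits infinitely many $\Sigma(p,q,pq-1)$ (the excluded case $n=1$) with $d>0$ that are not suitable. So there is no ``dichotomy governed by the sign of the correction term,'' and your proposed uniform computation would not distinguish $n=1$ from $n\geq 2$ in the $npq-1$ family, even though the theorem's truth depends on that distinction.

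What is actually needed for $\Sigma(p,q,npq-1)$, $n\geq 2$, is input you do not have in your outline: the classification of fillable (indeed tight) contact structures on these small Seifert fibered spaces \cite{EtnyreMinTosunVarvarezosPre}, which shows every such structure is filled by a Stein structure on the plumbing $X_{\Gamma_{p,q,n}}$ obtained by Legendrian surgery. Because the continued fraction of $-\frac{pqn-1}{(pq-1)n-1}$ contains a $-3$ entry exactly when $n>1$, every such Legendrian diagram has some nonzero rotation number, hence every Stein filling has $c_1\neq 0$; the theorem of Ozsv\'ath--Stipsicz--Szab\'o \cite{OSS:planar} (Stein filling with nonzero $c_1$ and $e(\xi)=0$ implies $c_{red}(\xi)\neq 0$) then gives suitability. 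You correctly identify that ruling out \emph{all} contact structures is the crux, but arguing ``at the level of the group $HF_{red}(-Y)$ and the position of the $d$-invariant'' cannot accomplish this: the grading condition $h(\xi)=d(-Y)$ is only a necessary condition for $c_{red}(\xi)=0$, and controlling the possible values of $h(\xi)$ over all fillable $\xi$ already requires the classification (or the diagonalizability route) that your proposal omits.
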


Theorem \ref{suitablebrieskornthm} follows from Corollary \ref{suitablecor} and Proposition \ref{suitableprop}.

Returning to Question \ref{CTquestion}, consider the case $X = \cee P^2$ with its essentially unique symplectic structure. Despite the constraints that follow from Corollary \ref{fancybrieskorncor}, there are many more rational homology spheres that appear as contact type hypersurfaces in $\cee P^2$ than seem to arise in $\arr^4$. Motivated by the examples known to the authors, we ask:

\begin{question}\label{weinsteinquestion} Let $Y$ be a contact type hypersurface in $\cee P^2$. Is the convex side of $Y$ necessarily a Weinstein domain?
\end{question}
\begin{question}\label{Lspaceconj}  Let $(Y,\xi)$ be a contact type hypersurface in $\cee P^2$, with $b_1(Y) = 0$. Is it the case that $HF_{red}(Y, \s_\xi) = 0$? 
\end{question}

Here $\s_\xi$ indicates the \spinc structure associated to the contact structure $\xi$. A 3-manifold $Y$ with $b_1(Y) = 0$ and  $HF_{red}(Y,\s) = 0$ for all $\s$ is called an $L$-space; if $Y$ is an integer homology sphere then it carries just one \spinc structure, and therefore the condition in Question \ref{Lspaceconj} is equivalent to $Y$ being an $L$-space in that case. A positive answer to Question \ref{Lspaceconj} would be a significant strengthening of Theorem \ref{obstrthm} in the case of $\cee P^2$, which asserts only that the single element $c_{red}(\xi)$ of $HF_{red}(-Y,\s_\xi)\cong HF_{red}(Y,\s_\xi)$ is trivial (and only under the Weinstein condition).


To add context to Questions \ref{weinsteinquestion} and \ref{Lspaceconj}, we review some of the known examples of contact type hypersurfaces in small rational surfaces. 
\begin{itemize}
\item If $L\subset X$ is a Lagrangian submanifold, then a suitable neighborhood of $L$ is a symplectically embedded Weinstein domain; this is also true if $L$ is allowed to have certain sorts of singularities. Thus, for example, one can find Lagrangian tori in a Darboux chart on $X$, giving rise to contact type hypersurfaces diffeomorphic to $T^3$. In an analogous vein, a construction due to Nemirovski and Siegel \cite{NemSie} exhibits the trivial disk bundle over any orientable surface $\Sigma$ of positive genus as a Weinstein neighborhood of a (singular) Lagrangian in $\cee^2$, and similar with certain disk bundles over non-orientable surfaces. Hence in particular we can find $\Sigma\times S^1$ embedded as a contact type hypersurface  in a Darboux chart in any symplectic 4-manifold. The structure of $HF_{red}(\Sigma\times S^1)$ is known and nontrivial in the relevant \spinc structure when the genus of $\Sigma$ is at least 2 \cite{OS:hfk,JMsurface}, thus a condition on $b_1(Y)$ is necessary for a positive answer to Question \ref{Lspaceconj}.\footnote{It is an elementary observation that a symplectic 4-manifold with $b^+ = 1$ does not contain any embedded orientable Lagrangian surfaces of genus $g>1$; see for example \cite[Remark 1.2]{charette15}. For a more difficult proof of a weaker result, one can also use Theorem \ref{obstrthm} to obstruct such embeddings in positive symplectic manifolds.}
\item A particularly interesting family is the collection of ``Lagrangian pinwheels'' $\L_{p,q}$ studied by Evans and Smith \cite{EvansSmith18}, some of which can be found in $\cee P^2$. The corresponding contact type hypersurfaces are lens spaces $L(p^2,pq-1)$, which certainly have $HF_{red} = 0$. Remarkably, Evans and Smith show that such a pinwheel exists in $\cee P^2$ if and only if $p$ is a member of a triple of positive integers $(x,y,z)$ satisfying the Markov equation $x^2 + y^2 + z^2 = 3xyz$, and in fact it follows from their work that the corresponding lens spaces are exactly the ones arising as contact type hypersurfaces in $\cee P^2$. It was shown in \cite{owensnonsymp} that infinitely many lens spaces not on this list embed smoothly in $\cee P^2$.
\item If a $C\subset X$ is a singular symplectic curve in the sense of \cite{gollastarkston19} (for example, a singular  algebraic curve) having positive self-intersection, then $C$ has a symplectically {\it concave} neighborhood $N(C)$ whose boundary is a contact type hypersurface. This gives a variety of such hypersurfaces $Y\subset \cee P^2$; an illustrative family is that arising from the rational unicuspidal curves whose singularity has only one Puiseaux pair. In this case $Y$ is diffeomorphic to the result of a negative surgery along a negative torus knot (see \cite{FLMN} for the full classification), and is an $L$-space. See Section \ref{largerationalsec} for additional discussion.
\item In general, if $C$ is a rational cuspidal curve in $\cee P^2$ with more than one singularity, then the corresponding contact type hypersurface $Y$ has $b_1(Y) = 0$ but is not an $L$-space. However, it can be shown that the answer to Question \ref{Lspaceconj} is positive in this case: see Proposition \ref{cuspidalprop}. Regarding Question \ref{weinsteinquestion}, it can be shown via standard K\"aher geometry that the complement of such a curve admits a Weinstein (even Stein) structure; see \cite{giroux:remarks} for a symplectic perspective similar to ours.
\item In Section \ref{planarsec} below, we describe a family of contractible $4$-manifolds $W_n$ admitting Weinstein structures that embed symplectically in the small rational surface $\cee P^2 \# 3\cptwobar$ (see Proposition \ref{planarprop}). Necessarily the corresponding contact structure has $c_{red} = 0$, however, the integer homology sphere $Y = \partial W$ is not an $L$-space. In other words, Question \ref{Lspaceconj} has a negative answer if $\cee P^2$ is replaced by a larger symplectic manifold, even a positive one.
\end{itemize}

We note that it is a longstanding open problem whether there exist any irreducible integer homology sphere $L$-spaces other than $S^3$ and the Poincar\'e sphere, up to orientation. In particular, a positive answer to Question \ref{Lspaceconj} would exclude a great many integer homology spheres that bound Stein domains holomorphically embedded in $\cee P^2$ (or even $\cee^2$) from being isotopic to contact type hypersurfaces---and hence the corresponding domains from being rationally convex up to deformation.

We close this section with a couple of additional remarks.

\begin{remark}\label{introremark}
\begin{enumerate}
\item It is not hard to see that any Brieskorn sphere, indeed any suitably-oriented Seifert 3-manifold, admits a contact type embedding in some positive symplectic rational surface $X$: see Section \ref{largerationalsec}. In the construction we describe, the convex side of such an embedding is not Weinstein, and indeed typically cannot be in light of the results above. In some cases, it is possible to modify the symplectic form on the convex side in such a way that it becomes Weinstein; in this situation the resulting symplectic structure on $X$ is non-positive except possibly in the circumstances allowed by Theorem \ref{obstrthm}.
\item For $Y$ to admit a separating contact type embedding in any $(X,\omega)$, certainly $Y$ must support a strongly symplectically fillable contact structure $\xi$. Granted this, if $W$ is a strong symplectic filling of $(Y,\xi)$, then by adding a ``symplectic cap'' \cite{eliashbergremarks,etnyrefillings,GScaps} one can construct a closed symplectic 4-manifold $X = W \cup_Y Z$ containing $Y$ as a separating contact type hypersurface. Thus any strongly fillable $(Y,\xi)$ admits a separating contact type embedding in {\it some} $(X,\omega)$, and the interest of Question \ref{CTquestion} lies in having fixed $X$ ahead of time. Alternatively, one could fix $Y$ and ask for possible constraints on $X$: in particular one might ask what is the ``simplest''  4-manifold into which $Y$ admits a contact type embedding. Analogous remarks apply in the context of Question \ref{genweinsteinquestion}, where a ``cap'' for a Weinstein manifold (equivalently by \cite{CE}, a Stein manifold) was first obtained by Lisca-Mati\'c \cite{LM}.
\end{enumerate}
\end{remark}

\subsection{Additional context and main structural theorem}\label{introsec2}
Here we review the notion of rational convexity and its relationship with symplectic topology, and state the central result leading to Theorem \ref{obstrthm}.

A subset $W$ in a smooth complex algebraic variety $X$ is {\it rationally convex} if through every point $p\in X - W$ there passes an algebraic hypersurface that is disjoint from $W$. While rational convexity is a classical topic in complex analysis, a variety of results in recent decades indicate that rational convexity has close connections with symplectic geometry. An instance of this connection is a result of Duval-Sibony \cite{DS} and Nemirovski \cite{Nem}: it states that  a domain $W\subset\cee^n$ is rationally convex if and only if it admits a Stein structure $(W, \varphi)$ whose associated K\"ahler form extends as a K\"ahler form to all of $\cee^n$.  Here a ``domain'' means a compact subset that is the closure of a connected open subset with smooth boundary, and a domain is Stein if it can be written as the sub-level set of an exhausting, strictly plurisubharmonic function $\varphi$ defined in a neighborhood of the domain. In this case the form $\omega_\varphi = - dd^c\varphi$ is the associated symplectic (K\"ahler) form. 
 
Recall that a {\it Weinstein domain} is a tuple $(W, \omega, \varphi, V)$ where $W$ is a compact manifold with boundary, $\omega$ a symplectic form on $W$, $\varphi: W\to [0,c]$ is a Morse function on $W$ such that  $\partial W = \varphi^{-1}(c)$ and $\varphi$ has no critical points on $\partial W$, and $V$ is a complete Liouville vector field on $W$ that is gradient-like for $\varphi$ (we use the terminology of \cite{CE}). As mentioned previously, Cieliebak-Eliashberg proved that a Stein domain $W\subset\cee^n$ is rationally convex if and only if, after an isotopy, it admits a Weinstein structure that is symplectically embedded in $(\arr^{2n}, \omega_{std})$.

The characterization of rational convexity by Duval-Sibony and Nemirovsky was extended to projective complex manifolds by Boudreaux, Gupta and Shafikov \cite{BGS}: they showed that a Stein domain $W\subset X$ in such a manifold is rationally convex if and only if an associated K\"ahler form $\omega_\varphi$ on $W$ extends to all of $X$ as a K\"ahler form $\omega_X$ having integral periods. From a symplectic point of view, a Stein domain is a particular kind of Weinstein domain, and the result of Boudreaux-Gupta-Shafikov implies that rational convexity of (the Stein domain) $W\subset X$ is equivalent to the condition that (the associated Weinstein domain) $W$ be symplectically embedded in $(X, \omega_X)$ for appropriate $\omega_X$.

Our main result can be seen as a purely symplectic version of one direction of the result of \cite{BGS}: essentially, that Weinstein domains in closed symplectic manifolds satisfy a version of rational convexity (in particular, the ambient manifold need not be algebraic nor even admit an integrable complex structure). To give the statement, note that if $(W, \omega, \varphi, V)$ is a Weinstein domain then the flow $\Phi_{-t}$ of $V$ is defined for all $t\geq 0$, and we write $W_t$ for the image $\Phi_{-t}(W)$. Observe that $W_t\subset W$ is again a Weinstein domain (with the restrictions of $\omega$ and $V$) that is conformally symplectomorphic to $W$ by $\Phi_{-t}$. Throughout the paper, if we refer to a Weinstein domain contained in a larger symplectic manifold, we aways assume that the Weinstein domain is equipped with the symplectic structure induced from the ambient manifold.

While our applications in Section \ref{introsec1} concern dimension 4, the following holds in all dimensions.

\begin{theorem}\label{ratconvthm} Let $(X,\omega)$ be a closed $2n$-dimensional symplectic manifold such that the class $\frac{1}{2\pi}[\omega]$ is integral, and $W\subset X$ a Weinstein domain in $X$. Let $p\in X - W$. Then for any sufficiently large integer $k$, there exists a smooth, codimension-2 symplectic submanifold $\Sigma$ containing $p$, with homology class $[\Sigma]$ Poincar\'e dual to the class $\frac{k}{2\pi}[\omega]$, such that $\Sigma \subset X - W_t$ for all sufficiently large $t$.
\end{theorem}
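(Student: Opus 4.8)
The plan is to deduce the theorem from Donaldson's construction of symplectic hypersurfaces, refined so that the resulting hypersurface both passes through $p$ and avoids the skeleton of $W$. Write $W_t = \Phi_{-t}(W)$ and let $\Skel = \bigcap_{t\geq 0} W_t$ be the skeleton of the Weinstein domain; it is a compact isotropic set contained in every $W_t$, and the gradient-like Liouville flow retracts $W$ onto it, so $W_t \to \Skel$ in Hausdorff distance as $t\to\infty$. The first observation is that the requirement ``$\Sigma\subset X - W_t$ for all sufficiently large $t$'' is equivalent to ``$\Sigma\cap\Skel = \emptyset$'': since $\Skel\subset W_t$ one implication is immediate, and conversely, if the compact sets $\Sigma$ and $\Skel$ are disjoint they lie a definite distance $\delta$ apart, so $W_t$ is eventually contained in the $\delta$-neighborhood of $\Skel$, which misses $\Sigma$. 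Thus it suffices to produce a symplectic hypersurface $\Sigma$, Poincar\'e dual to $\frac{k}{2\pi}[\omega]$, with $p\in\Sigma$ and $\Sigma\cap\Skel=\emptyset$.

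Next I would set up the standard prequantization. Since $\frac{1}{2\pi}[\omega]$ is integral there is a Hermitian line bundle $L\to X$ with a connection $\nabla$ of curvature $F_\nabla = -i\omega$, so that $c_1(L) = \frac{1}{2\pi}[\omega]$. Fix an $\omega$-compatible almost complex structure $J$ and the metric $g = \omega(\cdot, J\cdot)$. Donaldson's theorem provides, for all large $k$, sections $s_k$ of $L^{\otimes k}$ that are asymptotically $J$-holomorphic and uniformly transverse to $0$; for such sections the zero locus $s_k^{-1}(0)$ is a smooth symplectic submanifold whose homology class is Poincar\'e dual to $c_1(L^{\otimes k}) = \frac{k}{2\pi}[\omega]$. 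It therefore remains to arrange the two geometric constraints within this scheme.

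The heart of the argument is to keep the zero locus away from $\Skel$. Because $\Skel$ is isotropic we have $\omega|_{\Skel} = 0$; equivalently, $\omega = d\lambda$ with $\lambda = \iota_V\omega$ on a neighborhood $U$ of $\Skel$ inside $W$. This exactness is precisely the input needed to build, over $U$, asymptotically $J$-holomorphic sections of $L^{\otimes k}$ that are uniformly bounded below (hence non-vanishing) there: the curvature of $L^{\otimes k}$ degenerates in the directions tangent to $\Skel$, so the Gaussian peak sections underlying Donaldson's method can be concentrated in the complement of $\Skel$, following the construction of Auroux--Gayet--Mohsen for symplectic hypersurfaces in the complement of an isotropic submanifold. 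Feeding such reference sections into Donaldson's local-to-global transversality scheme, I would obtain a global section $s_k$ that is asymptotically holomorphic, uniformly transverse to $0$, and uniformly bounded below on a fixed neighborhood of $\Skel$, so that $s_k^{-1}(0)\cap\Skel = \emptyset$. To force $p\in s_k^{-1}(0)$, note that $p\in X - W$ lies a definite distance from $\Skel\subset W$; I would therefore modify $s_k$ near $p$ by subtracting a suitable multiple of a peak section centered at $p$, which makes $s_k(p)=0$ while preserving uniform transversality and leaving the behavior near $\Skel$ untouched.

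Putting these together, $\Sigma := s_k^{-1}(0)$ is a smooth symplectic submanifold Poincar\'e dual to $\frac{k}{2\pi}[\omega]$ that contains $p$ and is disjoint from $\Skel$, hence disjoint from $W_t$ for all large $t$ by the reduction above. The main obstacle I anticipate is technical rather than conceptual: the skeleton of a Weinstein domain is not a smooth submanifold but a compact isotropic stratified set, so the Auroux--Gayet--Mohsen construction (stated for smooth isotropic submanifolds) must be adapted to $\Skel$---for instance by an induction over the strata, ordered by dimension, thickening each stratum and controlling the estimates on overlaps---while simultaneously maintaining the uniform transversality needed for $s_k^{-1}(0)$ to be symplectic together with the pointwise constraint at $p$. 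Verifying that these estimates can be made uniform in $k$ across the singular skeleton is where the real work lies.
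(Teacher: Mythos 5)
Your outline coincides with the paper's strategy---reduce to producing a Donaldson hypersurface through $p$ that misses the skeleton, then run the Auroux--Gayet--Mohsen scheme with $\Skel(W)$ in the role of the isotropic submanifold---and your opening reduction (disjointness from $\Skel(W)$ is equivalent to disjointness from $W_t$ for all large $t$) is correct. But the step you defer as ``where the real work lies'' is precisely the main content of the proof, and the idea that makes it work is not a stratum-by-stratum adaptation of AGM's estimates. What the AGM scheme needs as input is a unit section $\tau_k$ of $L^{\otimes k}$ over (a shrinking neighborhood of) the skeleton with $|\nabla\tau_k|$ bounded independently of $k$; the peak sections are then summed with coefficients $\tau_k(p)/\sigma_{k,p}(p)$ so that they interfere constructively. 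The paper produces $\tau_k$ in two steps: first, an induction over the \emph{cells} of the skeleton (the descending manifolds of a Morse--Smale Weinstein structure) modifies the Liouville primitive to a form $\lambda_k$ with $d\lambda_k=\omega$ and $|\lambda_k|<1/k$ on a neighborhood $U_k$ of the skeleton (one cannot in general make $\lambda_k$ vanish on the singular skeleton, only small); second, since $\alpha_k+ik\lambda_k$ is closed for the connection form $\alpha_k$ of any unit section, a gauge transformation places it in a fixed bounded subset of the harmonic $1$-forms with Neumann boundary conditions, whence $|\nabla\tau_k|=|\alpha_k|$ is bounded on $W_{t_k}\subset U_k$, where $|k\lambda_k|\leq 1$. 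Your proposal contains neither the small-primitive construction nor the gauge fixing, and it omits a genuine topological obstruction: $L^{\otimes k}|_W$ need not even be topologically trivial unless $k$ is a multiple of $|\mathrm{Tor}(H_1(W;\zee))|$, and when $b_1(W)>0$ the holonomy must be tamed by the harmonic-form argument---otherwise no unit section exists at all, bounded derivative or not. (Relatedly, your claim of a lower bound on a \emph{fixed} neighborhood of $\Skel(W)$ is stronger than the method gives---the paper explicitly does not know whether the contraction of $W$ is necessary---though this overclaim is harmless, since your reduction only requires a bound on the skeleton itself.)

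The second gap is your device for forcing $p\in\Sigma$. Subtracting $\bigl(s_k(p)/\sigma_{k,p}(p)\bigr)\,\sigma_{k,p}$ from an already uniformly transverse $s_k$ does not ``preserve uniform transversality'': the subtracted term has size up to the a priori bound on $|s_k(p)|$, which is $O(1)$ rather than small, and nothing prevents the covariant derivative of $s_k-c\,\sigma_{k,p}$ from being arbitrarily small at $p$, so the zero set you create through $p$ may be degenerate there; doing the subtraction \emph{before} the transversality perturbation fails for the opposite reason, since the perturbation destroys the exact vanishing at $p$. The paper (following AGM) instead adds the section $u_{k,p}=k^{1/2}z_1\sigma_{k,p}$, which vanishes transversally---with derivative of unit size in the rescaled metric $g_k$---along a hypersurface through $p$, \emph{before} perturbing; the $C^1$-small (in $g_k$) transversality perturbation then yields a zero set passing within $g_k$-distance $1$ of $p$, and a final Hamiltonian diffeomorphism supported away from $W_{t_k}$ carries that nearby point onto $p$ exactly.
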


Thus, except for the requirement that the domain $W$ be ``contracted'' (that is, replaced by $W_t$ for some $t>0$ that may depend on $p$), we see that Weinstein domains $W\subset X$ are ``symplectically rationally convex.'' We do not know if the contraction of $W$ is truly necessary, but as our applications are insensitive to this detail we do not pursue the question. Note that the requirement that the class $\frac{1}{2\pi}[\omega]$ be integral is easy to arrange by a small perturbation of $\omega$ followed by a dilation, and also does not restrict our applications. In fact, the existence of symplectic submanifolds in the complement of a Weinstein domain holds without the integrality assumption on $\frac{1}{2\pi}[\omega]$, cf. Section \ref{nonintsec}.

The proof of Theorem \ref{ratconvthm} uses approximately holomorphic techniques introduced by Donaldson \cite{don96}, and in particular the submanifold $\Sigma$ dual to $\frac{k}{2\pi}[\omega]$ is obtained as the zero locus of one of a suitable sequence of sections $\{\sigma_k\}$ of tensor powers $L^{\otimes k}$ of a line bundle with $c_1(L ) = \frac{1}{2\pi}[\omega]$. Such a submanifold $\Sigma$ is often referred to as a {\it Donaldson divisor}. Our proof is an adaptation of an argument given by Auroux, Gayet and Mohsen \cite{AGM01}, who proved that an isotropic submanifold $L\subset X$ of a symplectic manifold is symplectically rationally convex---itself the symplectic analog of a result of Guedj \cite{guedj} in the algebraic setting.

It is natural to ask about the hypotheses of Theorem \ref{ratconvthm}, particularly the condition that $W\subset X$ be Weinstein. Clearly if a submanifold $\Sigma$ as in the statement is contained in $X - W$, then $\omega|_W$ must be exact: thus a natural weaker condition is that $W$ be merely a Liouville domain. This requires the existence of a Liouville vector field $V$ on $W$, directed out of $\partial W$, but not that $V$ be gradient-like for a Morse function on $W$. We do not know whether Theorem \ref{ratconvthm} as stated holds in this case. However, if one requires that the submanifold $\Sigma$ be obtained as a Donaldson divisor, then it certainly does not, which we can see as follows.

Consider a 4-dimensional Liouville domain $W$ with disconnected boundary $\partial W = Y_0\sqcup Y_1$. (The first examples of such manifolds were found by McDuff \cite{mcduff91}.) Each of $Y_0$ and $Y_1$ inherits a contact structure, which admit concave caps $V_0$ and $V_1$ by the constructions of \cite{eliashbergremarks,etnyrefillings,GScaps}. Indeed, these constructions are quite flexible, and one can arrange that $V_0$ and $V_1$ have substantial topology---in particular, we can ensure that neither of their symplectic forms is exact. The union $X = V_0 \cup_{Y_0} W\cup_{Y_1} V_1$ is  a closed symplectic 4-manifold containing $W$ as a symplectically embedded Liouville domain, and by the conditions on $V_0$ and $V_1$, any symplectic submanifold $\Sigma\subset X$ with $[\Sigma]$ Poincar\'e dual to $\frac{k}{2\pi}[\omega]$ must meet both $V_0$ and $V_1$. However, it is known that if $k$ is sufficiently large and  $\Sigma$ is obtained as a Donaldson divisor, then $\Sigma$ is connected \cite[Proposition 39]{don96}, hence must also intersect $W$.

\subsection{Organization and acknowledgements}

We give the proof of Theorem \ref{nonembthm}, Corollary \ref{fancybrieskorncor}, and Theorem \ref{obstrthm} in Section \ref{proofsec1}, along with some additional corollaries of the latter. Section \ref{appsec} illustrates the results with examples and applications, including the proof of Theorem \ref{suitablebrieskornthm}. The proof of Theorem \ref{ratconvthm} is in Section \ref{proofsec2}. 

The authors would like to thank Denis Auroux, T.\ J.\ Li, Tye Lidman, Sam Lisi, and Laura Starkston for helpful communications and conversations related to this work. We are also grateful to Blake Boudreaux, Purvi Gupta, and Rasul Shafikov for stimulating discussions at the Banff International Research Station, and to the BIRS center for providing an environment encouraging to these interactions. Thomas Mark was supported in part by grants from the Simons Foundation (numbers 523795 and 961391). B\"{u}lent Tosun was supported in part by grants from National Science Foundation (DMS2105525 and CAREER DMS 2144363) and the Simons Foundation (2023 Simons Fellowship). He also acknowledges the support by the Charles Simonyi Endowment at the Institute for Advanced Study.

\section{Proof of Theorems \ref{nonembthm} and \ref{obstrthm}}\label{proofsec1}

\subsection{Obstruction to Weinstein boundaries}
First we prove Theorem \ref{obstrthm} and some additional corollaries, assuming Theorem \ref{ratconvthm}. 

\begin{proof}[Proof of Theorem \ref{obstrthm}(a)] First, it follows from \cite{Gay2003} that since $\Sigma$ has positive self-intersection, a small tubular neighborhood $D$ of $\Sigma$ is symplectically concave: that is, there is a Liouville field defined near $\partial D$ and pointing transversely into $D$. Hence, the manifold
\[
Z = X - (\Int(W) \sqcup \Int(D))
\]
is a strong symplectic cobordism from $(Y,\xi)$ to $S = -\partial D$ with an induced contact structure $\eta$. 

Second, note if $g\geq 1$ then $S$ is diffeomorphic to a circle bundle over $\Sigma$ having degree $-\Sigma\cdot \Sigma<2-2g$ (the sign is because we have reversed orientation). In other words, $S$ is a circle bundle whose degree is ``large'' in absolute value compared to the genus $g(\Sigma)$. In this situation, a result of Ozsv\'ath and Szab\'o \cite[Theorem 5.6]{OS:integersurgery} shows that $HF_{red}(-S) = 0$, or equivalently  that the natural map $HF^\infty(-S)\to HF^+(-S)$ is surjective. The same conclusion holds for $g = 0$, since in this case $S$ is $S^3$ or a lens space, which also have $HF_{red} = 0$. Necessarily then, the contact invariant $c(\eta) \in HF^+(-S)$  is in the image of $HF^\infty\to HF^+$.

Finally, recall that the contact invariant is natural under strong symplectic cobordisms. This is a result of Echeverria \cite{mariano}, who showed that if $Z: Y\to S$ is such a cobordism, then the homomorphism in monopole Floer homology $\hmto(-S)\to \hmto(-Y)$ induced by $Z$ with the \spinc structure determined by $\omega$ carries $c(\eta)$ to $c(\xi)$. Recall that monopole Floer homology of a 3-manifold $M$ is isomorphic to Heegaard Floer homology via isomorphisms
 \[
 \hmfrom(M) \cong HF^-(M) \qquad \hmbar(M)\cong HF^\infty(M) \qquad \hmto(M)\cong HF^+(M)
 \]
 that respect the decompositions along \spinc structures and also commute with the maps in the long exact sequences relating the groups \cite{KLT1, CGH1}. Moreover, the isomorphism preserves the contact invariant \cite{CGH3}. Hence the monopole contact invariant lies in the image of $\hmbar\to \hmto$ if and only if the Heegaard Floer contact invariant is in the image of $HF^\infty\to HF^+$. Echeverria's result, together with the equivariance of homomorphisms induced by cobordisms, implies that the former property holds for the monopole contact invariant of $(Y,\xi)$, and hence so does the latter property for the Heegaard Floer contact invariant. This completes the proof of Theorem \ref{obstrthm}. Note that our somewhat circuitous logic is necessary because the isomorphisms between monopole and Heegaard Floer groups are not currently known to commute with maps induced by cobordisms, so strictly Echeverria's result is not known in the Heegaard Floer setting. While Theorem \ref{obstrthm} could equivalently be formulated in terms of monopole Floer homology, the results we need from \cite{MTctype} for our applications here are derived using calculational tools in Heegaard Floer theory and we stick with the latter language for that reason.\end{proof}

\begin{proof}[Proof of Theorem \ref{obstrthm}(b)] Assume that $(X,\omega)$ has the property that $\frac{1}{2\pi}[\omega]$ is an integral cohomology class. We show how to remove this condition in Section \ref{nonintsec}.

Let $W\subset X$ be a Weinstein domain with boundary the contact manifold $(Y, \xi)$. By Theorem \ref{ratconvthm} we may assume that for a sufficiently large integer $k$ and after contracting $W$ suitably, there is a symplectic surface $\Sigma$ embedded in the complement of $W$, representing the class Poincar\'e dual to $\frac{k}{2\pi}[\omega]$. Note that, as a Donaldson divisor, $\Sigma$ is connected \cite[Proposition 39]{don96}.

Since $\Sigma$ is symplectic, we have the adjunction formula 
\[
\langle c_1(X), \Sigma\rangle = 2 - 2g(\Sigma) + \Sigma\cdot \Sigma,
\]
In particular, if $(X,\omega)$ is a positive symplectic manifold then since $[\Sigma]$ is a positive multiple of $\frac{1}{2\pi}[\omega]$, the left side of the equation above is positive. Since clearly $\Sigma\cdot\Sigma >0$,  it follows that $\Sigma \cdot\Sigma > \max\{0,2g(\Sigma) - 2\}$.
\end{proof}

\begin{remark} It is known by the work of many authors that if $\omega$ is a positive symplectic structure on a 4-manifold $X$, then $\omega$ is a K\"ahler form: see \cite[Proposition 1.10]{LiNing}, and \cite{Lisurvey08} for additional references. Using this it is also possible to deduce Theorem \ref{obstrthm}(b) from the results of Boudreaux-Gupta-Shafikov \cite{BGS} in K\"ahler geometry. 
\end{remark}

\subsection{Suitable 3-manifolds}\label{suitablesec}

Observe that Theorem \ref{obstrthm} applies to a particular contact structure $\xi$ on $Y$, providing an obstruction for $(Y,\xi)$ to arise as a Weinstein boundary. If, as in the context of Question \ref{CTquestion}, one wishes to use Theorem \ref{obstrthm} to exclude a given 3-manifold $Y$ from arising this way one is {\it a priori} obliged to consider all possible contact structures on $Y$, or at least all the Weinstein fillable ones. Let us say that a 3-manifold $Y$ is {\it suitable} if every Weinstein fillable contact structure $\xi$ on $Y$ has the property that $c_{red}(\xi)\neq 0$. We will say $Y$ is {\it strongly suitable} if the same condition holds for all strongly filllable contact structures. Theorem \ref{obstrthm} immediately gives:

\begin{corollary}\label{nosuitablepositive} No suitable 3-manifold $Y$ is the oriented boundary of a Weinstein domain in a positive symplectic manifold.\hfill$\Box$
\end{corollary}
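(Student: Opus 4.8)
The plan is to play the two halves of Theorem \ref{obstrthm} against the definition of suitability to reach a contradiction. Suppose to the contrary that $Y$ is suitable yet occurs as the oriented boundary of a Weinstein domain $(W,\omega|_W)$ symplectically embedded in a positive symplectic manifold $(X,\omega)$. First I would extract the induced data on the boundary: the Liouville field $V$ on $W$, pointing out of $\partial W = Y$, determines a positive contact structure $\xi = \ker(\iota_V\omega|_Y)$ whose orientation agrees with the convex-side boundary orientation fixed throughout the paper. Since $(W,\omega|_W)$ is by hypothesis a Weinstein domain filling $(Y,\xi)$, the contact structure $\xi$ is Weinstein fillable, so the definition of suitability applies to $\xi$ and yields $c_{red}(\xi)\neq 0$.

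Next I would invoke Theorem \ref{obstrthm}(b): because $(X,\omega)$ is positive and $W$ is Weinstein, there is a connected embedded symplectic surface $\Sigma\subset X-W$ with $\Sigma\cdot\Sigma>\max\{0,2g(\Sigma)-2\}$, after possibly deforming $W$ through Weinstein domains. This is precisely the hypothesis of Theorem \ref{obstrthm}(a), which then forces $c_{red}(\xi)=0$. The two conclusions $c_{red}(\xi)\neq 0$ and $c_{red}(\xi)=0$ are incompatible, so no such $Y$, $W$, $X$ can coexist, which is the assertion of the corollary.

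The only point that requires care --- and hence the ``hard part'' of an otherwise immediate argument --- is to confirm that the deformation of $W$ permitted in Theorem \ref{obstrthm}(b) does not alter the element $c_{red}(\xi)$ on which the whole argument turns. This holds because the contact invariant depends only on the contactomorphism type of $(Y,\xi)$, while a deformation through Weinstein domains (for instance the Liouville contraction $W_t=\Phi_{-t}(W)$ of Theorem \ref{ratconvthm}, which is conformally symplectomorphic to $W$) carries the induced boundary contact structure to a contactomorphic one. Consequently the class $c_{red}(\xi)$ obstructed by suitability is literally the same class that Theorem \ref{obstrthm}(a)--(b) force to vanish, and the contradiction is genuine rather than an artifact of the deformation.
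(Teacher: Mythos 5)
Your proof is correct and is essentially the paper's own argument: the corollary is stated there as an immediate consequence of Theorem \ref{obstrthm}, obtained by pitting the conclusion $c_{red}(\xi)=0$ from parts (a) and (b) against the defining property $c_{red}(\xi)\neq 0$ of a suitable $Y$. Your extra observation that the Weinstein deformation (e.g.\ the Liouville contraction) preserves the boundary contact structure up to contactomorphism, and hence the class $c_{red}(\xi)$, is exactly the implicit point that makes the paper's one-line deduction valid.
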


We say a rational surface is {\it small} if it is diffeomorphic to $S^2 \times S^2$ or to $\cee P^2 \# k \cptwobar$ with $k\leq 9$. The the following is a consequence of long-established results in the literature:
\begin{proposition}[see \cite{salamonsurvey} and references cited therein]\label{smallposprop} Every symplectic structure on a small rational surface is positive.\hfill$\Box$
\end{proposition}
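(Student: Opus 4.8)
The plan is to reduce the statement to a sign computation governed by the Lorentzian structure of the intersection form, drawing on two standard facts reviewed in \cite{salamonsurvey}: the uniqueness of the symplectic canonical class on a rational surface (Li--Liu), and the connectedness of the symplectic cone together with the reverse Cauchy--Schwarz inequality on the positive cone.

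First I would observe that the quantity $\langle c_1(X,\omega)\cup[\omega],[X]\rangle = c_1\cdot[\omega]$ is invariant under orientation-preserving diffeomorphisms, since these preserve the fundamental class and carry $c_1(X,\omega)$ to $c_1(X,\phi^*\omega)$. Because the symplectic canonical class $K_\omega = -c_1(X,\omega)$ on a rational surface is unique up to the orientation-preserving mapping class group, I may therefore normalize, after replacing $\omega$ by a diffeomorphic copy, so that $c_1 = c_1(X,\omega)$ is the standard class: $c_1 = 3H - \sum_{i=1}^k E_i$ on $\cee P^2 \# k\cptwobar$ (with $H$ the line class and $E_i$ the exceptional classes), and $c_1 = 2A + 2B$ on $S^2\times S^2$ (with $A,B$ the two ruling classes).

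The arithmetic that isolates the range $k\leq 9$ is that $c_1^2 = 9 - k$ on $\cee P^2\# k\cptwobar$ and $c_1^2 = 8$ on $S^2\times S^2$, so for a small rational surface one has $c_1^2 \geq 0$. With respect to the intersection form of signature $(1, b^-)$, this says $c_1$ is a nonzero forward-causal class, timelike when $c_1^2 > 0$ (i.e. $k\leq 8$) and null when $c_1^2 = 0$ (i.e. $k = 9$), where the forward cone is normalized by $c_1\cdot H = 3 > 0$ (resp. $c_1\cdot(A+B) = 4 > 0$). The class $[\omega]$ is timelike since $[\omega]^2 > 0$. I would then invoke the light-cone lemma: the pairing of two forward-causal classes, at least one timelike, is strictly positive — for two timelike classes this is reverse Cauchy--Schwarz, $c_1\cdot[\omega] \geq \sqrt{c_1^2\,[\omega]^2} > 0$, and for a forward-null $c_1$ against a forward-timelike $[\omega]$ it is positive because a null class orthogonal to a timelike class must vanish. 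This produces $c_1\cdot[\omega] > 0$.

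The main obstacle is the apparent circularity in the last step: the light-cone lemma only controls the sign of $c_1\cdot[\omega]$ once one knows that $[\omega]$ and $c_1$ occupy the \emph{same} component of the positive cone, yet determining that component is itself equivalent to fixing the sign we want. I would resolve this through connectedness of the symplectic cone. By Li--Liu the set of classes represented by symplectic forms with the standard canonical class is connected; the continuous function $e\mapsto c_1\cdot e$ is nowhere vanishing on it (this is precisely the non-degeneracy of the previous paragraph, valid because $c_1$ is forward-causal and every $e$ in the cone is timelike, and it is exactly this non-vanishing that fails once $c_1^2 < 0$); and it takes a positive value at a standard Kähler class by elementary complex geometry. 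Hence it is positive throughout the cone, in particular at $[\omega]$. I would close by remarking that the threshold is sharp precisely because $c_1^2 = 9-k$ turns negative for $k\geq 10$, making $c_1$ spacelike and the sign of $c_1\cdot[\omega]$ no longer forced — consistent with the non-positive structures that exist in that range.
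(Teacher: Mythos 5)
The paper itself offers no argument here: Proposition \ref{smallposprop} is quoted wholesale from the literature (Salamon's survey and the references therein), so your proposal should be judged as a reconstruction of that literature. Judged that way, it is correct, and it is a genuinely different packaging of the final step. The standard route behind the citation is: (i) Li--Liu's uniqueness of the symplectic canonical class up to orientation-preserving diffeomorphism, exactly your normalization step; (ii) the theorem (Li--Liu's cone description together with McDuff's deformation-to-isotopy results) that every symplectic form on a rational surface is diffeomorphic to a K\"ahler form; and (iii) the observation that for $k\leq 9$ one has $K^2 = 9-k\geq 0$, so Riemann--Roch with $p_g=0$ makes $-K$ effective and hence $c_1\cdot[\omega] = [\omega]\cdot(-K)>0$ for any K\"ahler class. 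You replace (ii)--(iii) by the light-cone lemma, connectedness of the cone $\mathcal{C}_{M,K}$ of symplectic classes with fixed canonical class, and evaluation at a single K\"ahler class. What this buys is a completely transparent localization of the hypothesis $k\leq 9$ (it is exactly the condition that $c_1$ be causal, consistent with the failure for $k\geq 10$), and it avoids any complex geometry beyond exhibiting one blow-up K\"ahler form.

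One caution about where the weight of your argument actually sits. The connectedness step you invoke is not a soft fact, and you must cite it in a form that pins down a single component of the positive cone. The naive lattice-theoretic description $\{e : e^2>0,\ e\cdot E>0 \ \text{for all $K$-exceptional classes } E\}$ meets \emph{both} components: already on $\cee P^2\#\cptwobar$ the only $K$-exceptional class is $E_1$, and $e=-2H-E_1$ satisfies $e^2=3>0$ and $e\cdot E_1=1>0$ while $c_1\cdot e=-7<0$. So the assertion that all symplectic classes with canonical class $K_{\mathrm{std}}$ lie in one component (equivalently, that two symplectic forms with equal canonical class have positively-pairing classes) is precisely where the Taubes/Seiberg--Witten wall-crossing content of Li--Liu lives; it is a statement about the smooth manifold, not its intersection form. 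The Dolgachev surfaces make this vivid: they are homeomorphic to $\cee P^2\#9\cptwobar$, their canonical class is also causal ($K^2=0$), yet every symplectic form on them has $c_1\cdot[\omega]<0$. Your write-up does route all smooth-structure dependence through the two Li--Liu citations, so the proof stands; just be aware that the ``connectedness'' citation is carrying as much Seiberg--Witten theory as the K\"ahler-geometric argument it replaces, and the circularity you worried about would be a real gap if one only had the lattice-theoretic cone description.
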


Thus:

\begin{corollary}\label{nosuitablesmall} No suitable 3-manifold bounds a Weinstein domain symplectically embedded in any small rational surface.\hfill$\Box$
\end{corollary}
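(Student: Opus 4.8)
The plan is to obtain this as an immediate composition of the two preceding results. Suppose, toward a contradiction, that a suitable 3-manifold $Y$ arises as the oriented boundary of a Weinstein domain $(W, \omega|_W)$ symplectically embedded in a 4-manifold $X$ diffeomorphic to a small rational surface, where $\omega$ denotes the ambient symplectic form.

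First I would invoke Proposition \ref{smallposprop}: since $X$ is diffeomorphic to a small rational surface (that is, to $S^2\times S^2$ or to $\cee P^2 \# k\cptwobar$ with $k\leq 9$), every symplectic structure it carries is positive, so in particular $(X,\omega)$ is a positive symplectic 4-manifold. Then I would apply Corollary \ref{nosuitablepositive}, which asserts that no suitable 3-manifold is the oriented boundary of a Weinstein domain in a positive symplectic manifold. This directly contradicts the standing assumption, completing the argument.

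The main obstacle does not lie in this corollary itself, which is purely formal, but is entirely contained in its two inputs. The substance of Corollary \ref{nosuitablepositive} rests on Theorem \ref{obstrthm}: part (b) uses the positivity hypothesis together with Theorem \ref{ratconvthm} to produce a connected Donaldson divisor of large positive self-intersection in the complement of the (contracted) Weinstein domain, and part (a) then forces $c_{red}(\xi) = 0$ for the contact structure $\xi$ induced on the boundary. Since $\xi$ is Weinstein fillable (by $W$ itself), the definition of suitability precisely rules out $c_{red}(\xi) = 0$, yielding the contradiction upstream. The genuinely nontrivial steps in the chain are thus the construction of the symplectic hypersurface in the complement and the positivity computation via the adjunction formula, both already established; the present statement merely records that the positivity hypothesis is automatically satisfied for small rational surfaces.
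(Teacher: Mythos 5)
Your proposal is correct and matches the paper exactly: the paper presents this corollary as an immediate consequence of Proposition \ref{smallposprop} combined with Corollary \ref{nosuitablepositive} (hence the $\Box$ with no written proof), which is precisely the composition you give. Your closing remarks correctly identify where the real mathematical content lives, namely in Theorem \ref{obstrthm} and Theorem \ref{ratconvthm}, upstream of this purely formal step.
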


Checking that a given $Y$ is (strongly) suitable is typically difficult, as it appears to require a classification of fillable contact structures on $Y$ and a calculation of $c_{red}$ for each. However, we can exhibit some infinite families of suitable 3-manifolds. We focus on the case that $Y$ is a Brieskorn homology sphere $Y = \Sigma(p_1,\ldots, p_n)$. Recall that this means $Y$ arises as the link of a certain normal complex surface singularity, and in particular $Y$ is diffeomorphic to the boundary of a negative-definite plumbed 4-manifold $X_\Gamma$ (a minimal good resolution of the singularity; see \cite{MTctype} and the references cited there for additional detail). Note that a Brieskorn sphere inherits a preferred orientation as the boundary of $X_\Gamma$, and all our results assume this orientation. Furthermore, every Brieskorn sphere admits a Weinstein fillable contact structure, often many such.

A general result regarding suitability of Brieskorn spheres was obtained by the authors in \cite{MTctype}:
\begin{theorem}\cite[Theorem 5.1]{MTctype}\label{diagthm} If the intersection form of $X_\Gamma$ is diagonalizable over the integers, then every strongly fillable contact structure $\xi$ on $\Sigma(p_1,\ldots, p_n) = \partial X_\Gamma$ has $c_{red}(\xi) \neq 0$. In other words, $\Sigma(p_1,\ldots, p_n)$ is strongly suitable.
\end{theorem}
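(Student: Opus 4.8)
The plan is to prove the contrapositive structure-by-structure: given a strongly fillable $\xi$ on $Y=\Sigma(p_1,\ldots,p_n)$, I would show that the hypothesis $c_{red}(\xi)=0$ pins down the grading of the contact class so precisely that it becomes incompatible with $X_\Gamma$ having diagonalizable (hence, since $\partial X_\Gamma=Y$ is a homology sphere, unimodular) negative-definite intersection form. First I would record two general facts about $c^+(\xi)\in HF^+(-Y)$. Since $Y$ is an integer homology sphere, $HF^+(-Y)\cong \mathcal{T}^+\oplus HF_{red}(-Y)$, where $\mathcal{T}^+:=\mathrm{Im}(HF^\infty(-Y)\to HF^+(-Y))$ is the tower, whose bottom class sits in degree $d(-Y)=-d(Y)$. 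The class $c^+(\xi)$ is the image of the hat-version $c(\xi)\in\widehat{HF}(-Y)$ under the connecting map $\widehat{HF}(-Y)\to HF^+(-Y)$, whose image is exactly $\ker U$; hence $U\cdot c^+(\xi)=0$. Strong fillability forces $c^+(\xi)\neq 0$ by the non-vanishing theorem for the contact invariant (in the form underlying the naturality argument used for Theorem \ref{obstrthm}). Because $\mathcal{T}^+\cap\ker U$ is spanned by the bottom of the tower, the assumption $c_{red}(\xi)=0$ would put $c^+(\xi)$ at that bottom class, forcing the grading identity $\mathrm{gr}(c^+(\xi))=d(-Y)=-d(Y)$.

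Next I would compute $\mathrm{gr}(c^+(\xi))$ from a filling. Using the normalization $\mathrm{gr}(c^+(\xi))=-d_3(\xi)-\tfrac12$ together with Gompf's formula
\[
d_3(\xi)=\tfrac14\bigl(c_1(W)^2-2\chi(W)-3\sigma(W)\bigr)
\]
for any almost-complex filling $(W,J)$ of $(Y,\xi)$, one reduces to negative-definite fillings: if $b_2^+(W)>0$ the $HF^\infty$-part of the associated cobordism map vanishes, and one argues separately that $c^+(\xi)$ cannot then be the tower bottom. For a negative-definite filling $W$, whose form is unimodular, the formula collapses to $\mathrm{gr}(c^+(\xi))=-\tfrac14\bigl(c_1(\s_\xi)^2+b_2(W)\bigr)$, so the grading identity of the previous paragraph is precisely the statement that $W$ is \emph{sharp} for $\s_\xi$, i.e.\ $d(Y)=\tfrac14\bigl(c_1(\s_\xi)^2+b_2(W)\bigr)$.

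Finally I would bring in diagonalizability. The Ozsv\'ath--Szab\'o inequality applied to the negative-definite $X_\Gamma$ gives $d(Y)\ge \tfrac14\max_{\s}\bigl(c_1(\s)^2+b_2(X_\Gamma)\bigr)$; since a diagonalizable unimodular form has maximal characteristic square exactly $-b_2$, the right-hand side is $0$, so $d(Y)\ge 0$ and $d(-Y)\le 0$. The goal is then to upgrade the inequality $\mathrm{gr}(c^+(\xi))\ge d(-Y)$ (which always holds for a negative-definite filling by Ozsv\'ath--Szab\'o applied to $W$) to a \emph{strict} inequality, i.e.\ to rule out sharpness of $W$, thereby contradicting the grading identity and giving $c_{red}(\xi)\neq 0$. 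Here the characteristic-vector bound (Elkies' theorem, using unimodularity of $Q_W$) and Donaldson's theorem are the tools that convert the diagonalizability of $X_\Gamma$ into a constraint forcing $d(Y)$ strictly above $\tfrac14(c_1(\s_\xi)^2+b_2(W))$.

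I expect the last step to be the main obstacle, and to be genuinely harder than it looks, because the induced \spinc structure $\s_\xi$ and the intersection form of an \emph{arbitrary} strong filling $W$ need not agree with those of $X_\Gamma$; diagonalizability of one negative-definite filling does not obviously propagate to another, so the comparison of $d(Y)$ with $\tfrac14(c_1(\s_\xi)^2+b_2(W))$ cannot be read off from Donaldson's theorem alone. Making the argument uniform over all strongly fillable $\xi$ appears to require the explicit determination of $d(Y)$, and of the $U$-module structure of $HF^+(Y)$, for Brieskorn spheres via the plumbing / lattice-cohomology calculus of Ozsv\'ath--Szab\'o and N\'emethi; this is where the diagonalizability hypothesis should be converted into the strict inequality that excludes sharpness. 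Dispatching fillings with $b_2^+(W)>0$ and with torsion or nontrivial $\pi_1$ is a secondary technical point that I would handle through the $HF^\infty$-vanishing remark above and standard normalizations of Gompf's formula.
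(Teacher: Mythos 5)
This paper does not actually prove Theorem \ref{diagthm}: the result is imported wholesale from \cite[Theorem 5.1]{MTctype}, and, as the authors themselves note, the proof there is ``derived using calculational tools in Heegaard Floer theory'' --- the Ozsv\'ath--Szab\'o plumbing calculus --- rather than the formal grading argument you outline. Your opening reductions are correct and coincide with what the paper does in the proof of Theorem \ref{nonembthm}: strong fillability gives $c^+(\xi)\neq 0$, the contact class lies in $\ker U$, and $c_{red}(\xi)=0$ would force $\mathrm{gr}\bigl(c^+(\xi)\bigr)=d(-Y)$; the grading formula \eqref{contactgrading} and the Ozsv\'ath--Szab\'o inequality are also invoked correctly.

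The genuine gap is the step you defer to the end: upgrading $\mathrm{gr}\bigl(c^+(\xi)\bigr)\ge d(-Y)$ to a strict inequality, i.e.\ ruling out sharp fillings. That step is not merely hard --- it is false, so the strategy cannot be completed. Take $Y=\Sigma(2,3,7)$, which satisfies the hypothesis of the theorem (it has $d(Y)=0$, hence $X_\Gamma$ is diagonalizable by Elkies' theorem and \cite[Corollary 9.7]{OS:grading}). Attaching a single Weinstein $2$-handle to $B^4$ along a stabilized right trefoil with $\tb=0$, $\rot=\pm1$ produces Stein fillable contact structures whose filling $W$ has $b_1(W)=0$, $b_2(W)=1$ and $c_1(\s)^2=-1$, so $c_1(\s)^2+b_2(W)=0=4d(Y)$: the filling is \emph{sharp}, and $\mathrm{gr}\bigl(c^+(\xi)\bigr)=d(-Y)$ exactly. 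The same happens for the contractible Stein fillings of Brieskorn spheres constructed in \cite{MTctype} itself, since a homology-ball filling is automatically sharp. Thus, for precisely the manifolds the theorem concerns, there exist fillings for which every grading and intersection-form relation you invoke holds with equality, and no contradiction can be extracted at that level; the content of the theorem is finer, namely that even when $c^+(\xi)$ sits in degree $d(-Y)$ its component in $HF_{red}(-Y)$ is nonzero. Detecting that component requires actually computing the invariant inside $HF^+_{d(-Y)}(-Y)$ (an $\F^2$ for $-\Sigma(2,3,7)$), which is what \cite{MTctype} does via the plumbing machinery, and what the present paper does in special cases in Proposition \ref{suitableprop} using the classification of fillable contact structures \cite{EtnyreMinTosunVarvarezosPre}, the Ozsv\'ath--Stipsicz--Szab\'o theorem \cite{OSS:planar} (a Stein filling with $c_1\neq 0$ forces $c_{red}\neq 0$), and explicit $d$-invariant computations. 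Your closing intuition that lattice/plumbing calculations are unavoidable is sound, but the framework you would feed them into --- excluding sharpness --- is structurally incapable of yielding the conclusion. (A secondary caveat: your treatment of fillings with $b_2^+(W)>0$ relies on naturality of the contact class under strong, not necessarily Stein, cobordism maps, which in the Heegaard Floer setting is exactly the delicate point this paper circumvents through Echeverria's monopole result.)
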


As cases of Theorem \ref{diagthm}, we have:
\begin{corollary}\label{suitablecor} Any Brieskorn sphere whose Heegaard Floer $d$-invariant vanishes is strongly suitable. This includes:
\begin{enumerate}
\item  Any Brieskorn sphere $Y$ that bounds a rational homology ball, for example the Brieskorn spheres bounding contractible 4-manifolds obtained by Casson-Harer \cite{CH}.
\item The Brieskorn sphere $\Sigma(p,q,npq +1)$, for relatively prime integers $p,q\geq 2$ and integer $n\geq 1$.
\end{enumerate}
\end{corollary}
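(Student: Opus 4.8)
The plan is to deduce everything from Theorem~\ref{diagthm} by showing that the hypothesis $d(Y)=0$ forces the intersection form of the resolution $X_\Gamma$ to be diagonalizable over $\zee$. Since $Y=\partial X_\Gamma$ is an integer homology sphere, the form $Q_{X_\Gamma}$ is unimodular, and by construction it is negative definite of rank $b_2(X_\Gamma)$. Thus the whole content of the first assertion is the implication
\[
d(Y)=0 \ \Longrightarrow\ Q_{X_\Gamma}\cong \langle -1\rangle^{\oplus b_2(X_\Gamma)},
\]
after which strong suitability is exactly the conclusion of Theorem~\ref{diagthm}.

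For this implication I would combine two classical inputs. First, the inequality of Ozsv\'ath--Szab\'o for negative-definite fillings: for every \spinc structure $\s$ on $X_\Gamma$ one has $c_1(\s)^2 + b_2(X_\Gamma)\le 4\,d(Y)$, where $d(Y)$ is evaluated on the unique \spinc structure of the homology sphere $Y$. Setting $d(Y)=0$ and letting $\s$ range over all \spinc structures—equivalently letting $c_1(\s)$ range over all characteristic covectors of the lattice $Q_{X_\Gamma}$—this says every characteristic covector $c$ satisfies $c^2\le -b_2(X_\Gamma)$. Second, Elkies' characterization of the standard lattice: a unimodular negative-definite lattice of rank $r$ always carries a characteristic covector $c$ with $c^2\ge -r$, with equality attained only when the lattice is the diagonal lattice $\langle -1\rangle^{\oplus r}$. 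Applying this with $r=b_2(X_\Gamma)$ forces $\max_c c^2 = -b_2(X_\Gamma)$, hence diagonalizability, completing the first assertion. I regard this step—the interplay of the $d$-invariant bound with Elkies' rigidity—as the conceptual heart of the proof.

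For item (1), the point is simply that an integer homology sphere $Y$ bounding a rational homology ball has $d(Y)=0$: the unique \spinc structure on $Y$ extends over the ball, and the $d$-invariant of any \spinc structure that extends over a rational homology ball vanishes (Ozsv\'ath--Szab\'o). The Casson--Harer examples bound contractible $4$-manifolds, which are in particular integer (hence rational) homology balls, so the hypothesis $d(Y)=0$ applies and strong suitability follows from the first assertion.

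For item (2) the remaining task is the computation $d(\Sigma(p,q,npq+1))=0$. I would use the standard identification of this Brieskorn sphere, with its natural orientation, as the surgery $S^3_{-1/n}(T_{p,q})$ on the positive torus knot, together with the rational-surgery formula for the correction terms (Ni--Wu): writing $S^3_{-1/n}(T_{p,q})=-S^3_{1/n}(T_{p,-q})$ and applying $d(S^3_{1/n}(K))=-2V_0(K)$ for $n\ge 1$ gives $d(\Sigma(p,q,npq+1))=2\,V_0(T_{p,-q})$. Since the negative torus knot $T_{p,-q}$ is the mirror of an $L$-space knot, its local invariant vanishes, $V_0(T_{p,-q})=0$, so $d=0$ as needed. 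The main obstacle here—more bookkeeping than conceptual difficulty—is pinning down the orientation and surgery-sign conventions so that the link-of-singularity orientation of $\Sigma(p,q,npq+1)$ matches the surgery for which the correction-term formula returns $V_0$ of the \emph{negative} torus knot; a useful consistency check is $\Sigma(2,3,7)$, where the formula correctly yields $d=0$, as against $\Sigma(2,3,5)$ (the $-1$ family with $n=1$), where $d=2\neq 0$ and the separate argument of Proposition~\ref{suitableprop} is genuinely required.
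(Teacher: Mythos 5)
Your proposal is correct, and for the main assertion and item (1) it is exactly the paper's argument: the paper likewise combines the Ozsv\'ath--Szab\'o inequality for negative-definite fillings \cite[Corollary 9.7]{OS:grading} with Elkies' theorem \cite{Elkies} to deduce diagonalizability of $Q_{X_\Gamma}$ from $d(Y)=0$, and then invokes Theorem~\ref{diagthm}; item (1) is the same observation that bounding a rational homology ball forces $d(Y)=0$ by \cite{OS:grading}. The only divergence is item (2). The paper simply cites the computation of N\'emethi \cite{nemethi} (see also \cite{Tweedy2013}) that $d(\Sigma(p,q,npq+1))=0$, whereas you re-derive this from the Ni--Wu surgery formula $d(S^3_{1/n}(K))=-2V_0(K)$ together with the vanishing $V_0(T_{p,-q})=0$. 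Your orientation bookkeeping checks out: $\Sigma(p,q,npq+1)=S^3_{-1/n}(T_{p,q})=-S^3_{1/n}(T_{p,-q})$ is consistent with the paper's convention $\Sigma(p,q,npq-1)=S^3_{-1/n}(T_{p,-q})$, and your sanity checks ($d=0$ for $\Sigma(2,3,7)$, $d=2$ for $\Sigma(2,3,5)$) agree with the paper's sign conventions. The one assertion you leave unproved is that mirrors of L-space knots have $V_0=0$; this is true and standard (for instance, it follows from the fact that $V_0$ does not increase when a positive crossing is changed to a negative one, since a negative torus knot is unknotted by switching negative crossings to positive, or from the dual-staircase form of $CFK^\infty(T_{p,-q})$), but it is an input of essentially the same nature as the computation the paper cites, so the two routes are of comparable depth; yours has the mild advantage of treating all $n\geq 1$ uniformly by a surgery-formula argument rather than a plumbing computation.
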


\begin{proof} The Heegaard Floer $d$-invariant $d(Y)\in \zee$ of an oriented integer homology sphere $Y$ has the property that if $d(Y) = 0$ then the intersection form of any negative-definite 4-manifold with boundary $Y$ is diagonalizable: this follows from \cite[Corollary 9.7]{OS:grading} and Elkies' theorem (see \cite[Theorem 1]{Elkies} or \cite[Theorem 9.5]{OS:grading}). In particular, for a Brieskorn sphere $Y$ with $d(Y) = 0$, the corresponding plumbed manifold $X_\Gamma$ has diagonalizable intersection form, and therefore $Y$ is strongly suitable. Furthermore, any homology 3-sphere that bounds a rational homology ball has $d(Y) = 0$ \cite{OS:grading}, which gives the first statement.

For the second claim, it was shown by N\'emethi  \cite[Section 5.6.2]{nemethi} (see also \cite[Equation (1)]{Tweedy2013}) that the $d$-invariant of $\Sigma(p,q,npq+1)$ is zero. (Note that some members of this family bound contractible 4-manifolds, others do not, and for some it is unknown.)
\end{proof}

%
%

Theorem \ref{diagthm} is not a characterization of suitability, as we see from Proposition \ref{suitableprop} below. Note  that the Poincar\'e sphere $\Sigma(2,3,5)$ is not suitable, since $HF_{red}(-\Sigma(2,3,5)) =0$. Additional examples of non-suitable Brieskorn spheres include the family $\Sigma(p, q, pq-1)$ for $	p$ even and $q=pk+1$ with $k$ odd; see the discussion in Section \ref{nsuitable}.

\subsection{Constraints on Brieskorn spheres in rational surfaces}

\begin{proof}[Proof of Theorem \ref{nonembthm}]  
Recall that for a contact structure $\xi$ on a rational homology sphere $Y$, the contact invariant $c^+(\xi)$ lies in the group $HF^+_{h(\xi)}(-Y)$, where the grading $h(\xi)$ is equal to 
\begin{equation}\label{contactgrading}
h(\xi) = -\ts\frac{1}{4}(c_1^2(X, J) - 3\sigma(X) - 2e(X)) - \frac{1}{2}.
\end{equation}
Here $(X,J)$ is any compact almost-complex 4-manifold with $\partial X = Y$ such that $\xi$ is positively $J$-invariant. 

Now, the Floer homology $HF^+(-Y)$ is a module over the polynomial ring $\F[U]$ (where $U$ carries degree $-2$), and for any contact structure the class $c(\xi)$ lies in $\ker(U)$. Moreover, for an integer homology sphere $Y$ one can find an isomorphism $HF^+(-Y)\cong \T^+_d \oplus M$, where $M$ is isomorphic to a sum of cyclic modules of the form $\F[U]/U^k$, and $\T^+ \cong \F[U, U^{-1}]/ U\F[U]$. The subscript $d$ indicates the smallest degree of a nonzero element of $\T^+$, and by definition is the $d$-invariant  $d(-Y)$. In particular $\ker(U)\cap \T^+_d$ is one-dimensional and lies in degree $d$. Finally, the subgroup $\T^+_d$ is characterized as the kernel of the projection $HF^+(-Y)\to HF^+_{red}(-Y)\cong M$, and therefore the condition that $c_{red}(\xi) =0$ is equivalent to the claim $c(\xi)\in \ker U \cap \T^+_d$. 

It follows that if $h(\xi) \neq d(-Y)$, then $c(\xi)$ does not lie in the kernel of $HF^+\to HF_{red}$, or equivalently $c_{red}(\xi) \neq 0$.

A property of the $d$-invariant is that $d(Y) \geq 0$ if $Y$ bounds a negative-definite 4-manifold, and similarly, $d(Y)\leq 0$ if $Y$ is the oriented boundary of a positive-definite 4-manifold. Since a Brieskorn sphere $Y$ 
always bounds a negative-definite plumbed 4-manifold $X_\Gamma$, we have $d(Y)\geq 0$, and we have already pointed out that if $d(Y) = 0$ then the intersection form of $X_\Gamma$ is diagonalizable.

Let $Y$ be a Brieskorn homology sphere embedded in a positive symplectic rational surface $(X,\omega)$ as the boundary of a Weinstein domain. By the remarks above and Theorem \ref{obstrthm}, we must have $d(Y)>0$. Then $Y$ separates $X$ into two components as $X = X_A \cup_Y X_B$, neither of which can be a homology ball (because if $Y$ bounds a homology ball then $d(Y) = 0$). There is a corresponding nontrivial decomposition of the integer homology $H_2(X) = A \oplus B$ where $A = H_2(X_A)$ and $B = H_2(X_B)$. This decomposition is orthogonal with respect to the intersection form $Q_X$ on $H_2(X)$, and $Q_X$ restricts to unimodular symmetric bilinear forms $Q_A$ and $Q_B$ on the factors. By additivity of the signature, exactly one of these intersection forms is negative definite: let us assume it is $Q_A$. 

We claim that $Y$ must be oriented as the boundary of $X_A$, and hence $X_A = W$ is the convex side of the contact type embedding and a Weinstein domain. Indeed, if the opposite were true then we would have $-Y = \partial X_A$, which would imply $-d(Y) = d(-Y)\geq 0 $ since $X_A$ is negative definite, contradicting our condition $d(Y)>0$. Thus the intersection form of the Weinstein domain bounded by $Y$ is nontrivial and negative definite. 

Let $\xi$ be the contact structure induced on $Y$ by $X_A$. According to \cite{CE}, we can equip $X_A$ with a Stein structure $J$ deformation equivalent to the Weinstein structure. A theorem of Ozsv\'ath, Stipsicz and Szab\'o \cite{OSS:planar} states that if $(Y,\xi)$ is a contact manifold admitting a Stein filling $(X_A, J)$, such that the Euler class $e(\xi)$ is trivial and the Chern class $c_1(J)$ is nontrivial, then $c_{red}(\xi)\neq 0$. Triviality of $e(\xi)$ is automatic for our integral homology sphere $Y$, so we see that nonvanishing of $c_1(X_A)$ violates Theorem \ref{obstrthm}. Hence the restriction of $c_1(X)$ to $X_A$ must vanish. Since $c_1(X_A)$ is always a characteristic vector for $Q_A$, its vanishing implies that $Q_A$ is an even form.

Now consider the grading $h(\xi)$ of the contact invariant. We can use $X_A$ with the complex structure $J$ in the formula \eqref{contactgrading}, which reduces easily to 
\[
\ts h(\xi) = -\frac{1}{4} b_2(X_A).
\]
We have seen that a necessary condition for $c_{red}(\xi) = 0$ is that $h(\xi) = d(-Y)$. It follows that $b_2(X_A) = 4d(Y)$. 
\end{proof}

\begin{proof}[Proof of Corollary \ref{fancybrieskorncor}] This follows from Theorem \ref{nonembthm} because if a Brieskorn sphere $Y$ bounds a Weinstein domain $W$ in a rational surface $X$, then since $W$ is negative definite we have $b^-(X) \geq b^-(W) = b_2(W) = 4d(Y) \geq 8$, where the last inequality holds since $d(Y)$ is always an even integer and we have seen it must be strictly positive. Alternatively, we note that the smallest rank of a negative definite, unimodular, even, symmetric bilinear form over $\zee$ is $8$.
\end{proof}

\section{Applications and examples}\label{appsec}

\subsection{Non-suitable Brieskorn spheres}\label{nsuitable} In this section we complete the proof of Theorem \ref{suitablebrieskornthm} and provide some explicit examples of non-suitable Brieskorn spheres.

\begin{proposition}\label{suitableprop} Consider the family of Brieskorn homology spheres $\Sigma(p,q, npq-1)$, for $p, q>1$ relatively prime, and $n\geq 1$.
\begin{enumerate}
\item If $n>1$, then $\Sigma(p,q,npq-1)$ is strongly suitable, hence bounds no Weinstein domain in a positive symplectic 4-manifold.
\item For $n =1$, there is at most one fillable contact structure $\xi_0$ on $\Sigma(p,q,pq-1)$ for which $c_{red}(\xi_0) = 0$. 
\item The condition $c_{red}(\xi_0) = 0$ occurs for infinitely many pairs $(p,q)$, so for such pairs $\Sigma(p, q, pq-1)$ is not suitable. 
\end{enumerate}  
\end{proposition}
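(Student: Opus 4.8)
The three parts share a common engine, so I would set that up first. By Moser's description of surgeries on torus knots, $\Sigma(p,q,npq-1)$ is orientation-reversing diffeomorphic to $S^3_{1/n}(T_{p,q})$, which makes its Heegaard Floer invariants accessible through the surgery formula; in particular $d(\Sigma(p,q,npq-1))=2V_0(T_{p,q})$, a positive even integer that is \emph{independent of $n$}. The key reduction is the grading obstruction isolated in the proof of Theorem \ref{nonembthm}: for an integral homology sphere, $c_{red}(\xi)=0$ forces $h(\xi)=d(-Y)$, and for a negative-definite filling $W$ without $1$-handles one has $h(\xi)=-\tfrac14(c_1^2(W)+b_2(W))$, so the grading equality is exactly the sharp case $c_1^2(W)+b_2(W)=4d(Y)$ of the Ozsv\'ath--Szab\'o inequality. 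Applying this to the canonical negative-definite plumbing $X_\Gamma$ bounded by $Y$, which is sharp, gives $c_1^2(X_\Gamma)=K^2=4d(Y)-b_2(X_\Gamma)$. The heart of the computation is then the combinatorial claim that the arm of $X_\Gamma$ attached to the third Seifert invariant $npq-1$ is long enough that $b_2(X_\Gamma)>4d(Y)$ for all $(p,q)$ once $n\geq2$, whereas for $n=1$ one can have $b_2(X_\Gamma)=4d(Y)$; for $(p,q)=(2,3)$ this is visible in $X_\Gamma\cong E_8\oplus\langle-1\rangle^{\,n-1}$, which is even precisely when $n=1$.

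For part (1) I must show every strongly fillable $\xi$ on $\Sigma(p,q,npq-1)$ with $n\geq2$ has $c_{red}(\xi)\neq0$. Any contact structure admitting a Stein filling with $c_1\neq0$ is immediately handled by the Ozsv\'ath--Stipsicz--Szab\'o theorem \cite{OSS:planar} (as in the proof of Theorem \ref{nonembthm}, using $e(\xi)=0$). The genuinely delicate case is the canonical Milnor-fillable contact structure $\xi_{can}$: its Milnor-fiber Stein filling is stably parallelizable, hence has $c_1=0$, and its grading already satisfies $h(\xi_{can})=d(-Y)$, so neither the grading obstruction nor \cite{OSS:planar} is decisive. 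Here I would compute $c^+(\xi_{can})$ directly inside $HF^+(-Y)$ using the graded-root (lattice cohomology) description of the Heegaard Floer homology of plumbed manifolds \cite{nemethi}, and argue that the reduced homology forced by $b_2(X_\Gamma)>4d(Y)$ pushes the contact class off the bottom $U$-tower, so that $c_{red}(\xi_{can})\neq0$. Strong suitability then follows with Corollary \ref{nosuitablepositive}, and one also wants the classification of fillable contact structures on the small Seifert space $\Sigma(p,q,npq-1)$ in order to be sure no other strongly fillable $\xi$ escapes both criteria.

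Part (2) is the same analysis run in the extremal case $n=1$: since $c_{red}(\xi_0)=0$ requires $h(\xi_0)=d(-Y)$, and this is the minimal grading attained by the contact invariant, I would use the classification of fillable contact structures on $\Sigma(p,q,pq-1)$ together with their $d_3$-gradings to show that at most one of them sits at this extremal grading, hence at most one can have $c_{red}=0$. For part (3) I would exhibit an explicit infinite family of pairs for which this single candidate really does satisfy $c_{red}(\xi_0)=0$; the family $\Sigma(2,4k-1,8k-3)$ (the $n=1$ members with $p=2$ and $q\equiv 3\pmod 4$) is a natural choice, whose plumbing $X_\Gamma$ is an even negative-definite form of rank $8k$ (recovering $E_8$ at $k=1$). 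Evenness gives $c_1(X_\Gamma)=0$ and hence the necessary condition $h(\xi_{can})=d(-Y)$; the remaining, positive, statement is that $c^+(\xi_{can})$ is \emph{exactly} the bottom generator of $\mathcal{T}^+_{d(-Y)}$, which I would again read off from the graded root of $X_\Gamma$. Because these $Y$ are not $L$-spaces for $k\geq2$, this would yield genuinely new non-suitable Brieskorn spheres beyond the Poincar\'e sphere.

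The main obstacle throughout is precisely the placement of $c^+(\xi_{can})$ relative to the $U$-tower. The grading obstruction and the parity, indeed even the diagonalizability, of $X_\Gamma$ are \emph{not} decisive: for example $\Sigma(2,3,11)$ already satisfies $h(\xi_{can})=d(-Y)$, its sharp plumbing $E_8\oplus\langle-1\rangle$ is non-diagonalizable so that Theorem \ref{diagthm} does not apply, and its Milnor-fiber filling has $c_1=0$ so \cite{OSS:planar} is silent, yet the manifold is strongly suitable. Thus the $n=1$ versus $n\geq2$ dichotomy, the uniqueness in part (2), and the infinitude in part (3) must all come from an honest computation of the contact invariant in $HF^+(-Y)$ rather than from gradings, parity, or lattice-embedding arguments alone; carrying this out uniformly in $(p,q)$ and $n$ is where the real work lies.
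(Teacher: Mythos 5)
Your proposal has the right ingredients in places, but it defers or fumbles the decisive steps, and the central difficulty you identify is illusory. In part (1) you declare the Milnor-fillable structure $\xi_{can}$ a ``genuinely delicate case'' on the grounds that its Milnor-fiber filling has $c_1=0$, and you propose to handle it by computing $c^+(\xi_{can})$ from graded roots---a computation you never carry out and yourself flag as ``where the real work lies.'' But the criterion of \cite{OSS:planar} only requires that $\xi$ admit \emph{some} Stein filling with nonzero first Chern class; the existence of another filling with $c_1=0$ is irrelevant. The paper's proof exploits exactly this: by the classification in \cite{EtnyreMinTosunVarvarezosPre}, \emph{every} tight (hence every strongly fillable) contact structure on $\Sigma(p,q,npq-1)$, including $\xi_{can}$, is obtained by Legendrian surgery on the plumbing diagram for $X_{\Gamma_{p,q,n}}$; and for $n\geq 2$ the continued fraction $-\frac{pqn-1}{(pq-1)n-1}=[-2,\ldots,-2,-3,-2,\ldots,-2]^-$ contains a $-3$, whose vertex must be stabilized once and so carries rotation number $\pm 1$. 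Thus every fillable contact structure has a Stein filling with $c_1\neq 0$, and \cite{OSS:planar} finishes part (1) with no Floer homology computation at all. Since your alternative route is only sketched, part (1) of your proposal is genuinely incomplete.

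Parts (2) and (3) have similar problems. For (2), gradings alone cannot give uniqueness: distinct contact structures arising from nonzero rotation vectors (for instance conjugate ones, with $\mathbf{rot}$ and $-\mathbf{rot}$) have equal $c_1^2$ and hence equal grading, which may well be the extremal one; what actually yields ``at most one'' is again \cite{OSS:planar} (every nonzero rotation vector gives $c_{red}\neq 0$ regardless of grading, since $c_1(\s_\xi)=0$ on a homology sphere) together with the fact that at most one rotation vector vanishes. For (3), your claim that evenness of $X_\Gamma$ ``gives the necessary condition $h(\xi_{can})=d(-Y)$'' is a non sequitur: evenness plus the Ozsv\'ath--Szab\'o inequality gives only $b_2(X_\Gamma)\leq 4d(Y)$, and the equality $h(\xi_0)=-\frac{1}{4}b_2(X_\Gamma)=d(-Y)$ requires the semigroup computation $\alpha_{g-1}=\frac{1}{8}p(pk+2)$ for $q=pk+1$, $p$ even, $k$ odd. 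More importantly, the positive statement $c_{red}(\xi_0)=0$ does not require your (unexecuted) exact determination of the contact class: the paper instead invokes Tweedy's structure theorem for $HF^+(S^3_1(T_{p,q}))$, which places $\ker U\cap HF_{red}(-Y)$ in degrees $-2\alpha_{g-1+i}+2i(i-1)$, all strictly greater than $d(-Y)=-2\alpha_{g-1}$; hence \emph{any} class in $\ker U$ of degree $d(-Y)$---in particular $c^+(\xi_0)$---has vanishing reduced image. Pinning down the precise position of the contact class, on which your whole plan hinges, is exactly the hard step this degree argument is designed to avoid.
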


\begin{figure}[t]
\includegraphics[width=2in]{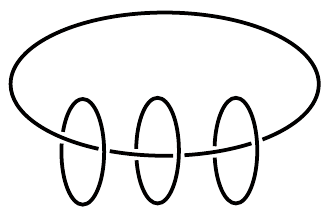}
\put(-130,-10){$-\frac{p}{q^*}$}
\put(-90,-10){$-\frac{q}{p^*}$}
\put(-60,-10){$-\frac{pqn-1}{(pq-1)n -1}$}
\put(-160,55){$-2$}
\caption{\label{surgdiag}Surgery diagram for $\Sigma(p,q,npq-1) = M(-2; \frac{q^*}{p}, \frac{p^*}{q}, \frac{(pq-1)n-1}{pqn-1})$}
\end{figure}

It seems an interesting question to determine, given $(p,q,r)$, what is the smallest value of $k$ so that $\Sigma(p,q,r)$ bounds a Weinstein domain in {\it some} symplectic structure on $\cee P^2 \# k\cptwobar$.  

To begin the proof, we first note that $\Sigma(p, q, pqn-1)$ is the result of $-\frac{1}{n}$ surgery on the negative torus knot $T_{p,-q}$ where $1<p<q$ relatively prime and $n\geq 1$. We can also describe this manifold as the small Seifert fibered space $M(-2; \frac{q^*}{p}, \frac{p^*}{q}, \frac{(pq-1)n-1}{pqn-1})$ where $p^*$ and $q^*$ are multiplicative inverses of $p$ and $q$, respectively: specifically, we require $0<p^*< q$ and $pp^* = 1$ mod $q$, and $0<q^*< p$ and $qq^* = 1$ mod $p$ (see, for example, \cite[Lemma 4.4]{OwensStrle12}, noting the orientation reversal and negative reciprocals in Seifert invariants). A surgery diagram for $\Sigma(p,q,npq-1)$ is shown in Figure \ref{surgdiag}. We also note that this Seifert fibered space arises as the boundary of a negative definite plumbed $4$-manifold $X_{\Gamma_{p,q,n}}$, and we start by describing the plumbing diagram for this manifold. 

Recall that for a sequence of integers $a_1,\ldots, a_m$, the associated ``negative'' continued fraction is 
\[
[a_1,a_2,\ldots, a_m]^-=a_1-
\cfrac{1}{a_2- \cfrac{1}{\ddots  -\cfrac{1}{a_m}}}.
\] 
A negative rational number $-\frac{r}{s}$ has a unique expression $-\frac{r}{s} = [-a_1,\ldots, -a_m]^-$, if one requires $a_j\geq 2$ for each $j\geq 2$. Furthermore, the $-\frac{r}{s}$ Dehn surgery on an unknot is equivalent to surgery on a simple ``chain'' of unknots with coefficients $-a_1,\ldots, -a_m$. This procedure allows us to convert the surgery diagram of Figure \ref{surgdiag} into an integer surgery diagram on a link of unknots: the link is described by a starshaped graph having a single vertex of valence 3 and three ``legs'', where vertices correspond to unknots and edges to simple linking. The surgery coefficients are $-2$ on the central vertex, and on the vertices in the legs they are given by the negative continued fraction coefficients of $-\frac{p}{q^*}$, $-\frac{q}{p^*}$ and $-\frac{pqn-1}{(pq-1)n -1}$.

It is a straightforward exercise to see that the continued fraction for $-\frac{pqn-1}{(pq-1)n-1}$ is
\[
-\frac{pqn-1}{(pq-1)n-1} = [\underbrace{-2, \cdots,-2}_{pq-2}, -3,\underbrace{ -2, \cdots,-2}_{n-2}]^-
\]
if $n>1$ and 
\[
-\frac{pq-1}{pq-2} = [\underbrace{-2, \cdots,-2}_{pq-2}]^-
\]
if $n=1$. 

We next follow a standard procedure of converting the handle diagram for $X_{\Gamma_{p,q,n}}$ into a Legendrian surgery diagram of a link of Legendrian unknots $L_i$. To do so, first realize each unknot in the diagram for $X_{\Gamma_{p,q,n}}$ as a standard Legendrian unknot with $tb=-1$ and $rot=0$. Then stabilize each component a number of times according to the corresponding weight of the vertex (or equivalently the entry in the continued fractions mentioned above): namely, if $-n_i\leq -2$ is the weight of the vertex, then then we stabilize $L_i$ a total of $n_i-2$ times to yield a Legendrian unknot with $tb = -n_i +1$. Attaching a Stein 2-handle along each of these Legendrians yields a Stein structure $J$ on $X_{\Gamma_{p,q,n}}$, hence a fillable contact structure on $\Sigma(p,q,pqn-1)$. Moreover, after fixing the orientations, we have  
\[
\langle c_1(J), S_i\rangle=rot(L_i),
\] 
where $S_i$ is (the homology class of) a capped off Seifert surface for $L_i$. We note that each stabilization of $L_i$ can be chosen to be positive or negative, and changes $\rot(L_i)$ by $\pm 1$ correspondingly. This yields $n_i-1$ possible rotation numbers $rot(L_i)$. In particular, if $n_i$ is odd, then the corresponding rotation number is necessarily nonzero. We organize all possible rotation numbers $rot(L_i)$ into a vector $\bf rot$. A result of Lisca-Matic \cite[Theorem 1.2]{LM} determines that different Chern classes (or equivalently different $\bf rot$ vectors) must induce non-isotopic contact structures on the boundary. In \cite{EtnyreMinTosunVarvarezosPre} it is proven that all strongly fillable (indeed all tight) contact structures on $\Sigma(p,q,pqn-1)$ come from this procedure. In particular, the vector $\bf rot$ enumerates all fillable structures on $\Sigma(p,q,pqn-1)$. With this understood, we can proceed with the proof of Proposition \ref{suitableprop}.

\begin{proof}[Proof of Proposition \ref{suitableprop}(1)]
In this case, as we assume $n>1$, we have \[-\frac{pqn-1}{(pq-1)n-1}=[\underbrace{-2, \cdots,-2}_{pq-2}, -3,\underbrace{ -2, \cdots,-2}_{n-2}].\] In particular, because of the entry $-3$, regardless of the weights on the other two legs, we have $\bf rot\neq \bf 0$. In other words, corresponding to any fillable contact structure on $\Sigma(p,q,pqn-1)$, we have a Stein structure on $X_{\Gamma_{p,q,n}}$ whose first Chern class is nonvanishing. On the other hand since  $\Sigma(p,q,pqn-1)$ is a homology sphere we have $c_1(\mathfrak{s}_\xi)=0$ for any fillable $\xi$ on $\Sigma(p,q,pqn-1)$. We have already recalled that \cite[proof of Corollary 1.5]{OSS:planar} implies that a contact structure $\xi$ with $c_1(\mathfrak{s}_\xi) = 0$ that admits a Stein filling with nonzero first Chern class must have $c_{red}(\xi)\neq 0$.
\end{proof}

\begin{proof}[Proof of Proposition \ref{suitableprop}(2)] As we have just seen, each strongly fillable contact structure $\xi$ on $\Sigma(p,q,pq-1)$ is filled by a distinct Stein structure $J_{\xi}$ on $X_{p,q,1}$. Moreover, if $c_1(J_\xi) \neq 0$ then $c_{red}(\xi) \neq 0$. Thus the only candidate for a fillable contact structure $\xi_0$ having $c_{red}(\xi_0) = 0$ is one filled by a Stein structure with $c_1(J_{\xi_0}) = 0$, and by the classification in \cite{EtnyreMinTosunVarvarezosPre} there is at most one such contact structure.
\end{proof}

\begin{proof}[Proof of Proposition \ref{suitableprop}(3)] We have seen that $\xi_0$ is characterized as arising from a Legendrian diagram for $X_{p,q,1}$ in which all rotation numbers are zero, or equivalently that the corresponding Chern class vanishes. Note that such a diagram is possible if and only if all coefficients in the continued fraction expansions of $-\frac{p}{q^*}$, $-\frac{q}{p^*}$ and $-\frac{pq-1}{pq-2}$ are even. Since $-\frac{pq-1}{pq-2}=[-2, \cdots,-2]^-$, we focus on the other two fractions. 

Observe that for $q = pk +1$, we have $q^* = 1$ and $p^* = (p-1)k +1$, and the simple expansions
\begin{eqnarray*}
-\frac{p}{q^*} &=& [-p]^-\\
-\frac{q}{p^*} = -\frac{pk+1}{(p-1)k + 1} &=& [\underbrace{-2,\ldots,-2}_{p-1}, -k-1]^-
\end{eqnarray*}
for each $k\geq 1$. Thus, the pairs $(p, pk+1)$ with $p$ even and $k$ odd satisfy the property that all coefficients of the continued fractions are even, and correspond to the family $\Sigma(p, pk+1, p(pk+1) - 1)$ of Brieskorn spheres. A plumbing diagram for the corresponding $X_{\Gamma}$ is in Figure \ref{PG}.

\begin{figure}[t]
\begin{center}
  \includegraphics[width=11cm]{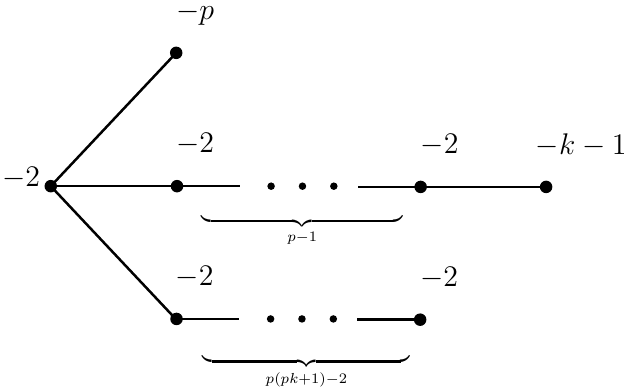}
 \caption{The plumbing graph $\Gamma_{p, pk+1, 1}$. Note $q^*=1$ and $p^*=(p-1)k+1$}
  \label{PG}
\end{center}
\end{figure} 

We claim that no member of this family is suitable. Equivalently, we claim that for $p$ even and $k$ odd, if $\xi_0$ is the contact structure on $\Sigma(p, pk+1, p(pk+1) - 1)$ filled by a Stein structure on $X_{p, pk+1, 1}$ with vanishing Chern class, then $c_{red}(\xi_0) = 0$. A sketch of the argument is as follows.

First, we apply the grading formula \eqref{contactgrading} to $X_{p,pk+1, 1}$, which is negative-definite and has $c_1(J_{\xi_0}) = 0$. Counting handles in the diagram (see Figure \ref{PG}), we find that the contact invariant for $\xi_0$ lies in degree $h(\xi_0) = -\frac{1}{4} p(pk+2)$.

On the other hand, according to Theorem 1 of \cite{Tweedy2013}, we have that for $-Y = S^3_1(T(p,q)) = -\Sigma(p,q,pq-1)$, the $d$-invariant is equal to $-2\alpha_{g-1}$, while under a decomposition $HF^+(-Y) \cong \T_d\oplus M$, the submodule $\ker(U)\cap M$ is contained in a sum of subgroups having degrees $-2\alpha_{g-1+i} + 2i(i-1)$ where $i = 1,\ldots, g-1$. Here the nonnegative integers $\alpha_{g-1}\geq\alpha_g \geq \alpha_{g+1}\geq \cdots$ are defined as follows. Let 
\[
\S_{p,q} = \{ ap + bq \, |\, a,b \in \zee_{\geq 0}\}
\]
be the semigroup generated by $p$, $q$. Then by definition
\[
\alpha_j = \#\{s\in \enn -  \S_{p,q}\,|\, s > j\}.
\]
The genus $g$ is the Seifert genus of the torus knot $T(p,q)$, namely $g = \frac{1}{2}(p-1)(q-1)$. 

The claim $c_{red}(\xi_0) = 0$ in the case $q = kp +1$, with $p$ even and $k$ odd, is therefore a consequence of the following, whose verification we leave to the reader:
\begin{itemize}
\item $\alpha_{g-1} = \frac{1}{8} p(pk + 2)$, so that $h(\xi_0) = d(-\Sigma(p,pk+1, p(pk+1) -1))$.
\item $-2\alpha_{g-1+i} + 2i(i-1) > -2\alpha_{g-1}$ for each $i$. 
\end{itemize}
\end{proof}

\subsection{Rational cuspidal curves}\label{cuspidalcurves}

Here we provide some evidence for a positive answer to Question \ref{Lspaceconj}, by considering the contact type hypersurfaces in $\cee P^2$ arising as the boundaries of concave neighborhoods of rational cuspidal curves. 

Recall that a rational cuspidal curve is a complex algebraic plane curve $C\subset \cee P^2$ that is rational (topologically a sphere) and whose singularities are locally irreducible. If $C$ is such a curve with singular points $z_1,\ldots, z_n$ having (connected) links $K_1,\ldots, K_n$, a neighborhood of $C$ can be given a smooth description as follows. A neighborhood of each singular point is homeomorphic to the cone, in the 4-ball, over the knot $K_k\subset S^3$. Take small balls around each $z_k$ and join them by neighborhoods of arcs in $C$, so that the portion of $C$ lying outside the resulting neighborhood is a smooth disk meeting the boundary in the connected sum $K = K_1\#\cdots \# K_n$. A neighborhood of $C$ is then diffeomorphic to the result of attaching a 2-handle to $B^4$ along $K$ with framing $d^2$, where $d$ is the degree of $C$. Writing $N(C)$ for this neighborhood, it is known that we can choose $N(C)$ to be concave with respect to the symplectic structure. We let $Y = -\partial N(C)$, and $\xi_C$ the positive contact structure on $Y$ arising from the concave structure on $N(C)$. The \spinc structure $\s_{\xi_C}$ is the restriction of the \spinc structure $\s_\omega$ on $\cee P^2$ corresponding to the standard symplectic structure, and we note $c_1(\s_\omega) = c_1(\cee P^2)$. 

\begin{proposition}\label{cuspidalprop} For a rational cuspidal curve $C$ in $\cee P^2$, let $(Y, \xi_C)$ be the corresponding contact type hypersurface as above. Then $HF_{red}(Y, \s_{\xi_C}) = 0$.
\end{proposition}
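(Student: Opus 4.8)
The plan is to reduce the statement to a computation in knot Floer homology via the large surgery formula, and then to exploit the numerical constraints satisfied by rational cuspidal curves.

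First I would make the topology explicit. By the description of $N(C)$ above, its boundary is $+d^2$ surgery on the connected sum $K = K_1\#\cdots\#K_n$, so that $-Y = \partial N(C) = S^3_{d^2}(K)$; since $HF_{red}(Y,\s_{\xi_C})\cong HF_{red}(-Y,\s_{\xi_C})$ it suffices to treat $S^3_{d^2}(K)$. Each $K_i$ is the link of a unibranch plane curve singularity, hence an iterated torus knot and in particular a fibered $L$-space knot with $g(K_i)=\delta_i$, the delta--invariant of $z_i$. Therefore $K$ is fibered of Seifert genus $g=g(K)=\sum_i\delta_i$, and the genus formula for a rational (geometric genus zero) cuspidal curve gives $g=\binom{d-1}{2}$. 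Consequently $d^2-(2g-1)=3d-1>0$, so the surgery coefficient $d^2$ is large relative to the genus (indeed $|s|\le g\le d^2/2$ for all relevant $s$), and the large surgery formula of \OnS\ identifies $\HFp(S^3_{d^2}(K),[s])\cong H_*(A^+_s)$ with the homology of the usual subquotient complex $A^+_s$ of $CFK^\infty(K)$.

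Next I would pin down the \spinc structure. Viewing the trace of the surgery as the cobordism $N(C)\colon S^3\to S^3_{d^2}(K)$, with $[\hat F]=[C]$ and $[\hat F]^2=d^2$, the standard labeling of \spinc structures together with $\langle c_1(\s_\omega),[C]\rangle = \langle 3H, dH\rangle = 3d$ forces $\s_{\xi_C}$ to be the class $[s]$ with $s\equiv 1-g \pmod{d^2}$; by conjugation symmetry we take $s=g-1$. Thus the claim is equivalent to the single statement $H_*(A^+_{g-1})\cong\T^+$, equivalently $\operatorname{rk}H_*(\hat A_{g-1})=1$, i.e. the vanishing of the reduced group in precisely the next--to--extremal \spinc structure. (Note that the vanishing of the contact element $c_{red}(\xi_C)$ alone is already implied by the presence of the positive self-intersection curve, cf. Theorem \ref{obstrthm}(a); the content here is the vanishing of the entire reduced group.)

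The crux is the computation of $H_*(A^+_{g-1})$. Fiberedness gives $\hfkhat(K,g)=\F$, so the extremal complex $A^+_g\simeq\T^+$ is automatically a tower, and $A^+_{g-1}$ differs from it only by the generators in filtration level $j=g-1$ with $i<0$; one must show these contribute no reduced homology. I would compute $\hfkhat(K,\cdot)$ near the top two filtration levels from the connected--sum formula $CFK^\infty(K)=\bigotimes_i CFK^\infty(K_i)$ and the staircase complexes of the $K_i$. The main obstacle is that $K$ is a connected sum of several $L$-space knots, so $CFK^\infty(K)$ is generally not a staircase and $S^3_{d^2}(K)$ is typically not an $L$-space: fiberedness by itself does not force $H_*(A^+_{g-1})=\T^+$. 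What rescues the argument is the global realizability of $C$ as a rational cuspidal curve, which constrains the semigroups $\S_i$ of the singularities through the genus formula and the symmetry of the associated semigroup counting function of Borodzik--Livingston and Bodn\'ar--Celoria--Golla; this function computes exactly the $d$-invariants and reduced groups of $S^3_{d^2}(K)$, and the remaining step is to verify that at $s=g-1$ it attains the tight value forcing the reduced part to vanish. I expect establishing this last numerical identity---translating the semigroup symmetry into the statement $\operatorname{rk}H_*(\hat A_{g-1})=1$---to be the principal difficulty.
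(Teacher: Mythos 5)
Your setup coincides with the paper's: the reduction to $-Y = S^3_{d^2}(K)$, the large-surgery identification of the Floer homology with the homology of the subquotient complexes, and the computation placing $\s_{\xi_C}$ at $s=1-g$ (conjugate to $g-1$) are all exactly as in the paper's proof. The gap is at the step you yourself flag as ``the principal difficulty'': you never prove that $H_*(\hat A_{g-1})$ has rank one, but instead propose to deduce it from the semigroup symmetry of Borodzik--Livingston and leave the translation into the required identity undone. Since everything before that point is bookkeeping, this unproved identity \emph{is} the proposition, so as written the argument is incomplete. Moreover, you have misdiagnosed what extra input is needed. You are right that fiberedness alone does not force the vanishing, but the rescue is not the global realizability of $C$ as a cuspidal curve; it is the concordance invariant $\tau$. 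Each $K_i$ is an $L$-space knot (as you note), hence $\tau(K_i)=g(K_i)$, and since both $\tau$ and the Seifert genus are additive under connected sum, $K$ is fibered with $\tau(K)=g$. The paper proves: \emph{any} fibered knot with $\tau(K)=g(K)$ has $H_*(\hat A_{g-1})\cong\F$. No property of $K$ beyond these two---in particular nothing remembering that the $K_i$ assemble into a plane curve of degree $d$---enters the argument.

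The homological algebra is short, and worth recording. Write $B = C\{i=0\}$ and $\eff_k = C\{i=0,\, j\le k\}$, so $B=\eff_g$. Because $\tau(K)=g$, the maps $H_*(\eff_{g-1})\to H_*(B)$ vanish in all degrees, so the long exact sequence of $\eff_{g-1}\subset B$ splits into short exact sequences relating $H_*(B)$, $H_*(\eff_g/\eff_{g-1})$, and $H_{*-1}(\eff_{g-1})$; since $H_*(B)\cong\F$ and, by fiberedness, $H_*(\eff_g/\eff_{g-1})=\hfkhat(K,g)\cong\F$, a dimension count gives $H_*(\eff_{g-1})=0$. Next, inside $\hat A_{g-1}$ the generators with $i<0$ all lie at $(i,j)=(-1,g-1)$ (by the genus constraint on the bifiltration), and they form a subcomplex $C\{i=-1,\, j=g-1\}$ with quotient $\eff_{g-1}$. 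The isomorphism $U\colon C\{i=0,\,j=g\}\to C\{i=-1,\,j=g-1\}$ identifies the homology of this subcomplex with $\hfkhat(K,g)\cong\F$, so the long exact sequence together with $H_*(\eff_{g-1})=0$ yields $H_*(\hat A_{g-1})\cong\F$, i.e. $HF_{red}(S^3_{d^2}(K),\s_{g-1})=0$. Your route through semigroup counting functions may well be completable---for connected sums of $L$-space knots those functions do determine $H_*(A^+_s)$---but it replaces this half page with an unverified combinatorial identity, and it leans on theorems whose own proofs use the embedding of the curve neighborhood into $\cee P^2$, input that turns out to be unnecessary here.
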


\begin{proof} We have seen that as an oriented manifold, $Y = - S^3_{d^2}(K)$ where $K = K_1\#\cdots\# K_n$. By the invariance of Floer homology under orientation reversal it suffices to prove the result for the corresponding \spinc structure on $-Y = S^3_{d^2}(K)$. Now, the \spinc structures on this surgery manifold can be enumerated as $\{\s_k\,|\, k\in \zee/d^2\zee\}$ by declaring $\s_k$ to be the restriction to $-Y$ of the \spinc structure $\TT_k$ on the neighborhood $N(C)$ characterized by 
\[
 \langle c_1(\TT_k), [C]\rangle - d^2 = 2k
\]
(see \cite{OS:hfk}, where we note that $C$ represents the same homology class as the union of the core of the 2-handle with a Seifert surface for $K$). Since $C$ is rational, we have $\sum g_i = \frac{1}{2}(d-1)(d-2)$, where $g_j$ is the Seifert genus of the knot $K_j$ (see \cite[Equation (2.1)]{BorodzikLivingston} or \cite{milnorsing}). Using additivity of the Seifert genus we find
\[
\langle c_1(\s_\omega), [C]\rangle - d^2 = 2(1-g(K))
\]
Thus we find that $\s_{\xi_C} = \s_{1-g}$ where $g = g(K)$. It also follows that $d^2 > 2g-2$. 

The results of \cite{OS:hfk} say that when $n>2g-2$, the Floer homology $\hfhat(S^3_n(K), \s_k)$ is isomorphic to the homology of a complex $A_k$ constructed from the knot Floer complex $CFK^\infty(K)$. The latter is a doubly-filtered chain complex whose two filtrations we will indicate by $i$ and $j$, and the complex $A_k$ is the subquotient determined by generators for which either $i = 0$ and $j\leq k$, or $i \leq 0$ and $j=k$. Moreover, after a chain homotopy equivalence, we can assume that all generators of $CFK^\infty(K)$ have filtrations satisfying $-g \leq i+j \leq g$. Finally, there is a symmetry $A_k \simeq A_{-k}$ for each $k$, where $\simeq$ is chain homotopy equivalence. 

Let $B = C\{i = 0\}$ be the subquotient of $CFK^{\infty}(K)$ determined by generators with $i = 0$. Recall from \cite{OS:hfk, OS:hfkgenus} that
\begin{itemize}
\item The homology of $B$ is 1-dimensional (we work over $\F$), equal to the Floer homology of $S^3$.
\item The subcomplexes $\eff_k = C\{i=0, j\leq k\}\subset B$ define the {\it knot filtration} on $B$, and the {\it knot Floer homology} of $K$ is the sequence of groups $\hfkhat(K, k) = H_*(\eff_k/\eff_{k-1})$. 
\item If the knot $K$ is fibered then $\dim\hfkhat(K, g) = 1$.
\item The invariant $\tau(K)\in \zee$ is defined to be the smallest integer $k$ so that the map $H_*(\eff_k)\to H_*(B)$ induced by the inclusion of $\eff_k$ is nontrivial.
\item There is a chain isomorphism $U$ of $CFK^\infty(K)$ that maps generators with bifiltration $(i,j)$ to those at bifiltration $(i-1,j-1)$.
\end{itemize}
Each of the knots $K_i$ is an $L$-space knot \cite{Hedden:cabling2}, hence is both fibered and has $\tau(K_i) = g(K_i)$ \cite{BS:Lspaceknots, Hedden:positivity}. Since $\tau$ and Seifert genus are both additive under connected sum, and the fibered property is also preserved under connected sum, we see that $K$ is fibered with $\tau(K) = g$ as well. We claim that any knot with these properties has $H_*(A_{1-g})\cong \F$. Since this homology is isomorphic to $\hfhat(S^3_{d^2}(K), \s_{1-g})$, and the condition that $HF_{red}= 0$ is equivalent to $\hfhat$ being one-dimensional (for a rational homology sphere, in a given \spinc structure), this will complete the proof.

Since $H_*(A_{1-g}) \cong H_*(A_{g-1})$, we work with the latter for convenience. First we claim that under the given hypotheses, $H_*(\eff_{g-1}) =0$. Indeed, we examine the long exact sequence associated to the inclusion $\eff_{g-1}\hookrightarrow B$, observing that $B = \eff_g$:
\[
\cdots \to H_*(\eff_{g-1})\to H_*(B) \to H_*(\eff_g/\eff_{g-1})\to \cdots
\]
Since $\tau(K) = g$, the map $ H_*(\eff_{g-1})\to H_*(B)$ vanishes, while since $K$ is fibered we have $H_*(\eff_g/\eff_{g-1}) = \hfkhat(K, g) \cong \F$. The claim follows since $H_*(B) \cong \F$ as well.

Now we examine the inclusion of $C\{i = -1, j = g-1\}$ as a subcomplex of $A_{g-1}$. The quotient is clearly identified with $\eff_{g-1}$, giving the long exact sequence
\[
\cdots\to H_*(C\{i = -1, j = g-1\}) \to H_*(A_{g-1}) \to H_*(\eff_{g-1})\to \cdots.
\]
The map $U$ induces an isomorphism $\hfkhat(K,g) = H_*(C\{i = 0, j=g\})\cong H_*(C\{i = -1, j = g-1\})$, hence the first term above is one-dimensional. The third term vanishes by the previous claim, hence $H_*(A_{g-1})\cong\F$ as asserted.
\end{proof}

\subsection{Brieskorn spheres in larger rational surfaces}\label{largerationalsec} 

We first observe that any Brieskorn sphere---indeed, any suitably-oriented Seifert manifold over $S^2$---can be found as a contact type hypersurface in some positive symplectic rational surface. To do so, let $Y$ be such a Seifert manifold and choose the orientation so that $Y$ is the boundary of a negative-definite plumbed 4-manifold $X_\Gamma$, where $\Gamma$ is a star-shaped plumbing graph. Write $m = e_0(Y)$ for the framing of the central vertex of $\Gamma$, and let $F_m$ be a rational ruled surface containing a section (embedded symplectic sphere) of self-intersection $m$. Then by repeatedly blowing up sphere fibers of $F_m$, one constructs a configuration $C$ of symplectic spheres in a blowup $\tF_m$, which intersect orthogonally (with respect to the symplectic form) according to the graph $\Gamma$. As a negative-definite symplectic configuration, $C$ has a convex neighborhood $nbd(C) \cong X_\Gamma$, whose boundary is a contact type hypersurface diffeomorphic to $Y$. By choosing the symplectic blowups appropriately, it is easy to ensure that the symplectic structure $\tilde\omega$ on $\tF_m$ is positive. (See \cite[Section 8.1]{SSW} for another perspective on this construction.)

Clearly, the construction above does not yield $X_\Gamma = nbd(C)$ as a Weinstein domain inside $\tF_m$: indeed, since this submanifold contains symplectic spheres, the symplectic form is not exact on $nbd(C)$. On the other hand, as long as $m\leq -2$, the manifold $X_\Gamma$ admits Stein structures by the procedure recalled in Section \ref{nsuitable}, and under some circumstances this structure can be chosen to agree near $\partial X_\Gamma$ with the original symplectic structure (this is essentially a matter of understanding the contact structures induced by the two symplectic forms; we undertake a similar construction below). In this case one can obtain a new symplectic structure $\tilde\omega'$ on $\tF_m$ by replacing the given symplectic structure on $nbc(C)$ with this Stein structure, and in this structure we see that $Y$ bounds a Weinstein domain. Necessarily by Theorem \ref{obstrthm}, the structure $\tilde\omega'$ will no longer be positive.

For a more concrete illustration, we construct a family of examples of symplectic structures on large rational surfaces, which contain (suitable) Brieskorn spheres as boundaries of Weinstein domains. This shows again that ``small'' is a necessary condition in Corollary \ref{nosuitablesmall} (and ``positive'' is necessary in Corollary \ref{nosuitablepositive}).  Various other examples along these lines are easy to obtain; we do not attempt to be exhaustive.

Consider a rational unicuspidal curve, i.e. one that has just one cusp singularity, with link the knot $K\subset S^3$. We also suppose that the singularity has a single Puiseux pair, which topologically corresponds to the case that $K$ is a torus knot. If the curve has degree $d$, then a neighborhood of $C$ is diffeomorphic to the result of attaching a handle to $B^4$ along $K$ with framing $d^2$. Choose $d^2 + 1$ smooth points in $C$, and blow up $\cee P^2$ at each of these points. We can perform the blowups symplectically, so as to obtain a symplectic (or even K\"ahler) form $\tomega$ on the blowup $\cee P^2\# (d^2 + 1)\cptwobar$. The proper transform $\tC$ of $C$ has self-intersection $\tC\cdot\tC = -1$, and its neighborhood $N(\tC)$ inside $\cee P^2\# (d^2 + 1)\cptwobar$ is diffeomorphic to a 4-ball with a handle attached along $K$ with framing $-1$. Now, from our assumptions, $K$ is isotopic to a positive torus knot $T_{p,q}$ for some $p,q$. Hence $\partial N(\tC)$ is diffeomorphic to the surgery manifold $S^3_{-1}(T_{p,q})$, which is the Brieskorn homology sphere $\Sigma(p,q, pq+1)$. 

All  positive torus knots have nonnegative maximal Thurston-Bennequin invariant, so in particular $K$ admits a Legendrian representative $\K$ with Thurston-Bennequin number $\tb(\K) = 0$, obtained for example by suitably stabilizing a standard representative. Thus the neighborhood $N(\tC)$ is diffeomorphic to the Stein manifold obtained by attaching a Stein 2-handle along $\K$, and in particular we see $N(\tC)$ admits a Stein structure. Of course, the symplectic structure $\omega_{\st}$ associated with a Stein structure on $N(\tC)$ is not equivalent to the restriction of the ambient symplectic structure $\omega_{\symp} = \tomega|_{N(\tC)}$: indeed, the curve $\tC$ is symplectic for $\omega_{\symp}$, and in particular the cohomology class $[\omega_{\symp}]$ is nontrivial, while $\omega_{\st}$ is exact by construction. However, we have the following.

\begin{lemma} The neighborhood $N(\tC)$ can be chosen to be symplectically convex for both $\omega_{\symp}$ and $\omega_{\st}$. For suitable choice of the stabilizations used to obtain the Legendrian $\K$, the corresponding contact structures $\xi_{\symp}$ and $\xi_{\st}$ on $\Sigma(p,q, pq+1) = \partial N(\tC)$ are isotopic. Hence the forms $\omega_{\symp}$ and $\omega_{\st}$ can be chosen to agree on a neighborhood of $\partial N(\tC)$.
\end{lemma}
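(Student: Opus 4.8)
The plan is to treat the three assertions in turn, reducing the third to the second and the second to a matching of contact invariants. For the convexity statements, $\xi_{\st}$ is unproblematic: attaching a Stein $2$-handle to $B^4$ along the Legendrian $\K$, which has $\tb(\K)=0$ and hence smooth attaching framing $\tb(\K)-1=-1$, produces a Stein domain whose boundary is automatically convex, so the Liouville field of the plurisubharmonic function defines $\xi_{\st}$ (Eliashberg--Gompf). For $\omega_{\symp}$, note that $\tC$ is a singular symplectic sphere with a single cusp and $\tC\cdot\tC=-1$; since this is a \emph{negative} curve, the theory of symplectic neighborhoods of singular symplectic curves (Golla--Starkston \cite{gollastarkston19}; compare the negative-definite configuration case discussed above) furnishes a symplectically convex neighborhood, and we take $N(\tC)$ to be one such. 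Its convex boundary defines $\xi_{\symp}$.

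Next I would reduce the final sentence to the isotopy claim $\xi_{\symp}\simeq\xi_{\st}$. Near a convex boundary a symplectic form is standard: on a collar of $\partial N(\tC)$ each of $\omega_{\symp},\omega_{\st}$ is symplectomorphic to a piece of the symplectization $\bigl([0,\varepsilon)\times\partial N(\tC),\,d(e^{s}\alpha)\bigr)$ for a contact form $\alpha$ of $\xi_{\symp}$, respectively $\xi_{\st}$. Given a contactomorphism carrying $\xi_{\st}$ to $\xi_{\symp}$, a Gray-stability and Moser argument---together with an overall rescaling of the Stein Kähler form, which does not change $\xi_{\st}$---lets me identify the two collars and arrange that $\omega_{\symp}$ and $\omega_{\st}$ literally coincide on a neighborhood of $\partial N(\tC)$. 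This is precisely the data needed to later excise $(N(\tC),\omega_{\symp})$ and glue in $(N(\tC),\omega_{\st})$.

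The heart of the argument is therefore the isotopy $\xi_{\symp}\simeq\xi_{\st}$ on $\Sigma(p,q,pq+1)$, where I would exploit the freedom in the stabilizations defining $\K$. Both structures are fillable: $\omega_{\st}$ is a Stein filling, and $N(\tC)$ is a Kähler (in particular strong) filling for $\omega_{\symp}$. As $\Sigma(p,q,pq+1)$ is an integer homology sphere it carries a unique $\spinc$ structure, so the relevant data are the homotopy class of the plane field and the rotation/$c_1$ invariant of a Legendrian-surgery filling. By the classification of \cite{EtnyreMinTosunVarvarezosPre}, every fillable contact structure on $\Sigma(p,q,pq+1)$ arises from Legendrian surgery on a representative of $T_{p,q}$, and by Lisca--Matić \cite{LM} such structures are distinguished by $c_1$ of the filling---equivalently by $\rot(\K)$, since $\langle c_1(J),[\tC]\rangle=\rot(\K)$ and $[\tC]$ generates $H_2(N(\tC))\cong\mathbb{Z}$. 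Passing from the maximal-$\tb$ representative (with $\overline{\tb}(T_{p,q})=pq-p-q$) down to $\tb=0$ requires $m=pq-p-q$ stabilizations, each chosen positive or negative, so that $\rot(\K)$ can realize any value in $\{-m,-m+2,\dots,m\}$; note $m$ is odd, consistent with $\langle c_1,[\tC]\rangle$ being characteristic for $\langle-1\rangle$ and hence odd.

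It then remains to compute the invariant of $\xi_{\symp}$ and hit it with a choice of signs. I would compute $\langle c_1(\omega_{\symp}),[\tC]\rangle$ from the adjunction formula for the singular curve $\tC$ in the Kähler blow-up of $\cee P^2$, where the cusp contributes its $\delta$-invariant $\delta=\tfrac12(p-1)(q-1)$; I expect to find the extreme value $\langle c_1(\omega_{\symp}),[\tC]\rangle=1-(p-1)(q-1)=-m$, corresponding to stabilizing $\K$ entirely on one side. Matching this with $\rot(\K)$ then forces $c_1(J_{\st})=c_1(\omega_{\symp})$ on $N(\tC)$, whence $\xi_{\st}\simeq\xi_{\symp}$ by the classification. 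I expect this final matching to be the main obstacle: it requires confirming that $\xi_{\symp}$ genuinely lies in the classified Legendrian-surgery family (so that $c_1$ of its filling is a complete invariant), and a careful bookkeeping of orientations to pin down the sign of $\langle c_1(\omega_{\symp}),[\tC]\rangle$---since $\xi$ and its conjugate are generally non-isotopic, I must realize the correct sign, which is exactly what the free choice of the common sign of the $m$ stabilizations provides.
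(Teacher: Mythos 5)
Your proposal assembles the same ingredients as the paper's proof: convexity of $N(\tC)$ with respect to $\omega_{\symp}$ by resolving the cusp to a negative-definite configuration (the paper gets this from Gay--Stipsicz \cite[Theorem 1.2]{GayStipsicz2009}, not from \cite{gollastarkston19}, which it invokes only for \emph{concave} neighborhoods of positive curves), the enumeration of Legendrian-surgery contact structures with rotation numbers $\{-m,-m+2,\dots,m\}$, $m=pq-p-q$, pairwise distinguished by Lisca--Mati\'c \cite{LM}, and the completeness statement of \cite{EtnyreMinTosunVarvarezosPre}. In fact, by the end of your third paragraph you are already done, and this soft argument is exactly the paper's proof: $\xi_{\symp}$ is strongly fillable (filled by $(N(\tC),\omega_{\symp})$), so by the classification it \emph{is} one of the $m+1$ Legendrian-surgery structures; since the lemma only asks for a \emph{suitable} choice of stabilizations, you choose whichever rotation number realizes that structure. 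No identification of which one is required, so what you call the ``main obstacle'' is not an obstacle at all.

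Your fourth paragraph is therefore unnecessary, and its key inference has a genuine gap. You argue that arranging $\rot(\K)=\langle c_1(\omega_{\symp}),[\tC]\rangle$ ``forces'' $\xi_{\st}\simeq\xi_{\symp}$. But Lisca--Mati\'c compares \emph{Stein} structures on a fixed 4-manifold, and $(N(\tC),\omega_{\symp})$ is not Stein, nor even an exact filling: it contains the closed symplectic (indeed holomorphic) curve $\tC$, so $[\omega_{\symp}]\neq 0$ on $N(\tC)$ --- this is precisely why the paper must swap in $\omega_{\st}$ in the first place. Moreover, even granting an extension of \cite{LM} to strong fillings, its content is that \emph{different} Chern classes force \emph{non-isotopic} boundary contact structures; the implication you need --- that equal Chern classes of two different fillings force isotopic contact structures --- is not a formal consequence, since a priori one contact structure could admit fillings with different $c_1$. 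To honestly identify $\xi_{\symp}$ inside the classified family you would need an invariant of the contact structure itself, e.g.\ the homotopy invariant $d_3$ computed from an arbitrary almost-complex filling; since the underlying smooth manifold $N(\tC)$ is fixed, that only sees $c_1^2$, hence pins down $\rot(\K)=\pm m$, with the residual sign absorbed by your free choice of stabilizations --- which is, again, just the soft argument. (For what it is worth, your adjunction computation is correct: rationality gives $(d-1)(d-2)=(p-1)(q-1)$, so $\langle c_1(\omega_{\symp}),[\tC]\rangle=3d-(d^2+1)=1-(p-1)(q-1)=-m$.)
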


\begin{proof}
The claim that $N(\tC)$ can be chosen to be symplectically convex for $\omega_{\symp}$ follows from \cite[Theorem 1.2]{GayStipsicz2009}, by resolving the singular point of $\tC$ by blowups. For the remaining claims we first briefly recall the classification of fillable contact structures on the Brieskorn homology sphere $\Sigma(p, q, pq+1)$, which we have observed is the result of smooth $-1$ surgery on the $(p,q)$-torus knot $T_{p,q}$. The knot $T_{p,q}$ admits an oriented Legendrian representative $\mathcal K'$ with maximal Thurston-Bennequin number $tb(\mathcal K')=pq-p-q$ and $rot(\mathcal K')=0$. In particular, any contact $(-pq+p+q-1)$ surgery along $\mathcal K'$ (smoothly, $-1$ surgery) produces a Stein fillable contact structure on the smooth manifold $\Sigma(p,q, pq+1)$, each of which is filled by a Stein structure on $N(\tC)$ obtained by a Stein 2-handle attachment. More precisely, such a contact surgery is realized as Legendrian surgery on a Legendrian knot $\mathcal K$ that is obtained from $\mathcal K'$ by $pq-p-q$ stabilizations, each of which can be chosen to be either positive or negative.  Thus there are $pq-p-q+1$ such Legendrians $\K$, having pairwise distinct rotation numbers $\{-pq+p+q, -pq+p+q+2, \cdots, pq-p-q-2, pq-p-q\}$ . Since the Chern classes of the corresponding Stein structures evaluate on the handle as the rotation numbers of Legendrian knots, by a result due to Lisca-Mati\'{c} \cite[Theorem 1.2]{LM}, the induced contact structures are non-isotopic. This produces at least $pq-p-q+1$ fillable structures on $\Sigma(p,q,pq+1)$. Finally, by \cite[Theorem 1.9]{MT:pseudoconvex} for $(p,q)=(2,3)$ and \cite{EtnyreMinTosunVarvarezosPre} for general $(p,q)$, we know that $\Sigma(p,q,pq+1)$ has at most (and hence exactly) $pq-p-q+1$ fillable contact structures. Since $\xi_{\symp}$ is such a structure, we can choose the Stein structure on $N(\tC)$ so that $\xi_{\symp}$ and $\xi_{\st}$ on $\Sigma(p,q, pq+1) = \partial N(\tC)$ are isotopic.
\end{proof}

As with the construction at the beginning of this section, the convexity of $N(\tC)$ with respect to $\tilde\omega$ exhibits $\Sigma(p,q,pq+1)$ as a contact type hypersurface in $(\cee P^2\# (d^2+1)\cptwobar, \tilde\omega)$, which by appropriate choices in the blowup construction we can take to be positive.

We now define a symplectic form $\omega_0$ on $X := \cee P^2\# (d^2 + 1)\cptwobar$ by setting $\omega_0 = \omega_{\symp}$ outside $N(\tC)$, and $\omega_0 = \omega_{\st}$ in $N(\tC)$. This is a well-defined smooth symplectic form by the lemma, and by construction $\Sigma(p,q,pq+1)$ is a contact type hypersurface in $(X,\omega_0)$ whose convex side is the Stein domain $(N(\tC), \omega_{\st})$. In particular $(N(\tC),\omega_{\st})$ is a Weinstein domain symplectically embedded in $(X,\omega_0)$ with contact boundary $(\Sigma(p,q,pq+1), \xi_{\st})$. 

We have seen in Corollary \ref{suitablecor} that $\Sigma(p,q,pq+1)$ is suitable. 
It follows by Corollary \ref{nosuitablepositive} that the symplectic structure $\omega_0$ on $X = \cee P^2\# (d^2 + 1)\cptwobar$ is non-positive, though a more direct argument is possible. (Note that by Theorem \ref{obstrthm}, to see the non-positivity of $\omega_0$ it would suffice to show only that the particular contact structure $\xi_{\st}$ on $\Sigma(p,q,pq+1)$ has $c_{red}(\xi_{\st}) \neq 0$. )

\begin{remark} There is a full topological classification of rational unicuspidal plane curves with a single Puiseux pair, due to Fern\'andez de Bobadilla, Luengo, Melle-Hern\'andez, and N\'emethi \cite{FLMN}. The simplest example among the possibilities is a curve of degree 3, whose singularity is modeled on the cone on the right trefoil $T(2,3)$. This gives rise by the procedure above to a symplectic structure $\omega_0$ on $\cee P^2 \# 10\cptwobar$ containing a Weinstein domain whose boundary is $\Sigma(2,3,7)$. More generally, the family of \cite[Theorem 1.1(a)]{FLMN} gives rise by this construction to a copy of $\Sigma(d-1, d, d^2-d+1)$ that bounds a Weinstein domain in an appropriate, non-positive, symplectic structure on $\cee P^2 \# (d^2 + 1)\cptwobar$, for each $d\geq 3$.
\end{remark}

\subsection{Additional examples} \label{planarsec}

In \cite{Oba:mazur}, Oba constructed an infinite family of Stein manifolds $W_n$ with the following properties:
\begin{itemize}
\item Each $W_n$ is the total space of a positive allowable Lefschetz fibration over $D^2$ whose typical fiber is a disk with 3 holes, and which has three vanishing cycles.
\item The $W_n$ are contractible, and are Mazur type in the sense that they admit a handle decomposition with a single handle of each index $0,1,2$.
\item The boundary 3-manifolds $Y_n = \partial W_n$ are mutually non-diffeomorphic.
\end{itemize}

We can embed each $W_n$ symplectically in a rational surface by following the procedure of \cite{Etnyre:planar}: one attaches symplectic 2-handles along circles in $Y_n$ corresponding to all of the boundary components of the Lefschetz fiber, to arrive at a symplectic 4-manifold containing $W_n$, with a genus-0 Lefschetz fibration over $D^2$ having closed fibers, and whose boundary is $S^1\times S^2$. This can be completed to a closed symplectic manifold $X$ by adding the neighborhood of a symplectic sphere having self-intersection $0$. (Alternatively, one can use the construction of \cite{Wendl:nonexact} to cap off all but one boundary component of the fiber, resulting in a symplectic manifold strongly filling $(S^3,\xi_{std})$, and complete with a neighborhood of a symplectic $+1$ sphere.) By the classification of symplectic 4-manifolds containing symplectic spheres of nonnegative square \cite{McDuff:rationalruled}, and since $X$ is simply connected, $X$ is a rational surface. In fact, since there are three vanishing cycles in each $W_n$, a simple calculation in homology shows that $X\cong \cee P^2 \# 3 \cptwobar$. We conclude:

\begin{proposition}\label{planarprop} For each $n\geq 1$, there is a symplectic structure on $X = \cee P^2 \# 3\cptwobar$ containing a symplectically embedded copy of the Stein manifold $W_{n}$, and in particular $X$ contains the 3-manifold $Y_{n}$ as a contact type hypersurface bounding a Weinstein domain.
\end{proposition}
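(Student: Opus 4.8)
The plan is to exploit the positive allowable Lefschetz fibration (PALF) structure on $W_n$ and to build a closed symplectic manifold by capping off this fibration, essentially as sketched in the paragraph preceding the statement. By the standard correspondence between PALFs and Stein fillings (as used in \cite{Etnyre:planar}), the planar PALF on $W_n$ endows it with a Weinstein (Stein) structure whose induced contact structure on $Y_n=\partial W_n$ is supported by the open book determined by the fibration. The essential data are the three vanishing cycles and the boundary circles of the $3$-holed-disk fiber, and I would track both throughout the construction.

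First I would enlarge $W_n$ following \cite{Etnyre:planar}: attach a symplectic $2$-handle along each circle in $Y_n$ that is parallel to a boundary component of the Lefschetz fiber. Each such handle caps off one boundary component of the fiber, so after attaching a handle for every boundary component the generic fiber becomes the closed sphere $S^2$. This yields a compact symplectic $W_n'\supset W_n$ carrying a genus-$0$ Lefschetz fibration over $D^2$ with closed fibers and with $\partial W_n'\cong S^1\times S^2$, equipped with its standard tight contact structure. I would then cap this boundary: since $S^1\times S^2$ is the contact boundary of a standard symplectic $S^2\times D^2$ (a neighborhood of a symplectic sphere of square $0$), gluing $S^2\times D^2$ to $W_n'$ along matching contact structures produces a closed symplectic manifold $(X,\omega)$. (Alternatively one can run the construction of \cite{Wendl:nonexact}, capping all but one fiber boundary to strongly fill $(S^3,\xi_{std})$ and then completing with a symplectic $+1$ sphere.) By construction $W_n$ sits inside $X$ as a symplectically embedded Weinstein domain, which is exactly the ``in particular'' clause: $Y_n$ is a contact type hypersurface in $X$ whose convex side is the Weinstein domain $W_n$.

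It then remains to identify $X$ up to diffeomorphism. Viewing the base disk together with the disk underlying $S^2\times D^2$ presents $X$ as a genus-$0$ Lefschetz fibration over $S^2$. Since the fiber $S^2$ is simply connected, so is $X$; and since $X$ contains the square-$0$ symplectic fiber sphere, McDuff's classification of symplectic manifolds containing a symplectic sphere of nonnegative square \cite{McDuff:rationalruled} forces $X$ to be a rational surface. Finally I would compute $b_2(X)$: the three vanishing cycles contribute the extra second homology, and a short handle/homology count (driven precisely by the three vanishing cycles) gives $b_2(X)=4$. A simply connected rational surface with $b_2=4$ is diffeomorphic to $\cee P^2\#3\cptwobar$, completing the identification.

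The main difficulty I expect is not the final homological bookkeeping but the symplectic and contact matching at every gluing interface: one must verify that each symplectic $2$-handle attachment and the terminal $S^2\times D^2$ cap can be carried out so that the contact structures on the intermediate hypersurfaces agree, ensuring that $\omega$ is globally well defined and, crucially, that the Weinstein structure on $W_n$ is preserved so that $W_n\subset X$ is genuinely an embedded Weinstein domain with boundary $Y_n$. These compatibilities are standard in the Lefschetz-fibration and symplectic-capping literature, but they are where the real work lies; the identification $X\cong\cee P^2\#3\cptwobar$ then follows formally.
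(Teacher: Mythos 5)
Your proposal is correct and follows essentially the same route as the paper: capping off the fiber boundary components with symplectic $2$-handles as in \cite{Etnyre:planar} (or via \cite{Wendl:nonexact}), closing up with a neighborhood of a square-zero symplectic sphere, invoking McDuff's classification \cite{McDuff:rationalruled} together with simple connectivity, and identifying $X \cong \cee P^2 \# 3\cptwobar$ by a homology count driven by the three vanishing cycles. The contact/symplectic matching you flag as the ``real work'' is exactly what the cited constructions supply, so there is nothing further to add.
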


As the boundary of a nontrivial Mazur manifold, $Y_n$ never has vanishing reduced Floer homology in its unique \spinc structure \cite{ConwayTosun}.

Clearly the same construction embeds any genus-0 positive allowable Lefschetz fibration as a Weinstein submanifold of a rational surface. It is not hard to see that ``interesting'' examples (say, those having hyperbolic 3-manifolds as boundary) embed by this construction in $\cee P^2 \# k\cptwobar$ with $k\geq 3$.

\section{Proof of Theorem \ref{ratconvthm}}\label{proofsec2}

\subsection{Background and main argument}
In this section we prove that Weinstein domains in symplectic manifolds are ``symplectically rationally convex'' in the sense of Theorem \ref{ratconvthm}. As mentioned in the introduction our argument closely parallels that of Auroux, Gayet and Mohsen \cite{AGM01}, which itself is based on techniques of Donaldson \cite{don96}. The argument applies in all dimensions, so we suppose $(X,\omega)$ is a closed symplectic manifold of dimension $2n$ and $W\subset X$ a Weinstein domain. This means $W$ is part of a tuple $(W, \omega, \varphi, V)$, where we do not distinguish between $\omega$ and its restriction to $W$. Here $V$ is a Liouville vector field, meaning that $\L_V\omega = \omega$, and hence the 1-form $\lambda = \iota_V\omega$ satisfies $d\lambda = \omega$. We also choose an almost-complex structure $J$ and Riemannian metric $g$ on $X$ that are compatible with $\omega$.

The condition that $\frac{1}{2\pi}[\omega] \in H^2(X;\arr)$ lies in the image of the integral cohomology of $X$ means that we can find a complex line bundle $L\to X$ whose real first Chern class is $c_1(L) = \frac{1}{2\pi}[\omega]$.  Fixing such $L$, we also choose a hermitian structure on $L$ and a unitary connection $\nabla$ whose curvature form is $F = -i\omega$. For each $k>0$, we consider the bundle $L^{\otimes k}$ with induced metric and connection, whose curvature is $F_k = -ik\omega$. The almost-complex structure and connection determine operators $\partial$ and $\dbar$ on $L^\tk$ in the usual way, and if $s$ is a section of $L^\tk$, we can consider the norms $|s|$, $|\dbar s|$, $|\nabla s|$, $|\nabla\nabla s|$ and so on, where the norms of derivatives involve the metric $g$. As in \cite{AGM01} and \cite{don96}, in some circumstances it is convenient to use the scaled metric $g_k = k\cdot g$ instead, and in that case we write $|\cdot |_k$ for the corresponding norm.

Suppose that for each $k$ we are given a section $\sigma_k$ of $L^\tk$. Following \cite{AGM01}, we say that the sequence $\{\sigma_k\}$ is {\it asymptotically holomorphic} if there is a constant $C>0$ independent of $k$ such that
\[
|s_k| + |\nabla s_k|_k + |\nabla\nabla s_k|_k<C\quad\mbox{and}\quad |\dbar s_k|_k + |\nabla \dbar s_k|_k < Ck^{-1/2}
\]
pointwise on $X$, for each $k$. Donaldson observed that if the sequence $\{\sigma_k\}$ is asymptotically holomorphic and satisfies an additional condition called {\it uniform transversality to 0}, then for all sufficiently large $k$ the zero locus $\Sigma_k = \sigma_k^{-1}(0)$ is an embedded symplectic submanifold of $X$. By construction, the homology class $[\Sigma_k]$ is Poincar\'e dual to $\frac{k}{2\pi}[\omega]$. Auroux, Gayet and Mohsen showed that an isotropic submanifold $L$ of $X$ is symplectically rationally convex by proving that for each $p\in X-L$ one can find a sequence $\{\sigma_k\}$ that is asymptotically holomorphic and uniformly transverse to 0, where each $\sigma_k(p)= 0$ and furthermore each $\sigma_k$ is nonvanishing on $L$. Our goal is similar, with the Weinstein domain $W$ replacing the isotropic submanifold $L$ (and where we will have to allow the contraction of $W$). 

The construction of the needed sections goes roughly as follows. Donaldson first observed that the positivity of the curvature of $L^\tk$ allows one to construct sections that are controllably ``close'' to being holomorphic and also supported near any given point $p\in X$. To obtain such a section one takes a Darboux chart $\psi: B\subset \cee^n\to X$ centered at $p$, whose domain is a ball $B$ of fixed size. One can arrange that $\psi$ is $J$-holomorphic at $p$, and that $|\dbar_J \psi|$ is bounded by a constant independent of $p$, using compactness of $X$. One can take $L^\tk$ to be trivialized over $\psi(B)$, in such a way that the connection 1-form becomes $A_k = \frac{k}{4}\sum_j (z_j\,d\bar z_j - \bar z_j \,dz_j)$ in the chart coordinates. Then the local section defined in $B$ by
\[
\hat\sigma_k(z) =e^{-k|z|^2/4}
\]
is holomorphic, i.e. satisfies $\dbar_{A_k}\hat\sigma_k = 0$ where $\dbar_{A_k}$ is defined using the standard complex structure on $\cee^n$. Introduce a cutoff function $\beta_k$ that vanishes at distance $k^{-1/3}$ from $0$ in $\cee^n$, and transfer the function $\beta_k\hat\sigma_k$ to $X$ via $\psi$ to obtain a global section $\sigma_{p,k}$ of $L^\tk$ that vanishes outside a neighborhood of $p$ having $g$-diameter $O(k^{-1/3})$, or equivalently $g_k$-diameter $O(k^{1/6})$. By a suitable scaling, we can assume that $|\sigma_{p,k}(p)| = 1$. One can show that as $k$ increases the family $\{\sigma_{p,k}\}$ is asymptotically holomorphic and is ``concentrated'' at $p$ in the sense of \cite{AGM01} (see \cite[Section 2]{don96}). 

To continue, one chooses for each $k$ a finite subset $P_k\subset X$, such that the $g_k$-unit balls centered at points of $P_k$ cover $X$, but whose points are not ``too clustered''---e.g., no pair of points are within $g_k$ distance $\frac{2}{3}$. Then for any collection $w = \{w_p\}_{p\in P_k}$ of complex numbers, one obtains a section
\begin{equation}\label{sectiondef}
s_w = \sum_{p\in P_k} w_p\,\sigma_{p,k}
\end{equation}
of $L^\tk$ which, as long as each $|w_p|\leq 1$, Donaldson shows gives rise to an asymptotically holomorphic family. Donaldson further shows that by a suitable perturbation procedure, given an asymptotically holomorphic sequence $\{s_k\}$, one can make a $C^1$-small (with respect to $g_k$) modification of the sections $s_k$ so that the perturbed sequence $\{\tilde s_k\}$ is asymptotically holomorphic and uniformly transverse to 0 (see also \cite{aurouxGAFA97} and the discussion before Lemma 2 in \cite{AGM01}). Our primary goal, then, is to see that we can find an asymptotically holomorphic sequence $\{s_k\}$ so that $|s_k|>c$ at all points of $W_t$, for all sufficiently large $t$ and some constant $c$ independent of $k$. A sufficiently small perturbation then gives an asymptotically holomorphic and uniformly transverse family $\{\tilde s_k\}$ such that $\tilde s_k$ does not vanish on $W_t$ for all sufficiently large $t$. It follows that for sufficiently large $k$, the subset $\Sigma = s_k^{-1}(0)$ is a symplectic submanifold Poincar\'e dual to $\frac{k}{2\pi}[\omega]$, which is disjoint from $W_t$. This will prove Theorem \ref{ratconvthm}, except for requiring $\Sigma$ to contain a given point $p$.

The essential point is to show that the coefficients of the section $s_w$ can be chosen so that any cancellation that occurs in the combination \eqref{sectiondef} is negligible over $W_t$ for large $t$. The following lemma is the key to making this argument; the statement is an adaptation of \cite[Lemma 2]{AGM01}.

\begin{lemma}\label{trivlemma} There exists a sequence of positive real numbers $t_k\to\infty$ and a constant $C>0$ such that whenever $k$ is a multiple of $|\mathrm{Tor}(H_1(W;\zee))|$, the restriction of $L^{\otimes k}$ to $W_{t_k}$ admits a smooth unit section $\tau_k$ having $|\nabla \tau_k|\leq C$ at each point of $W_{t_k}$.
\end{lemma}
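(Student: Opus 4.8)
The plan is to reduce the statement to an estimate on the connection $1$-form of a unit section, and then to exploit two facts: a topological triviality coming from the hypothesis on $k$, and the geometric fact that the contracted domains $W_t$ collapse onto the isotropic skeleton of $W$, on which the Liouville form $\lambda$ is negligible to first order.

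First I would record the basic dictionary. On $W_t$ the curvature of the connection $\nabla$ on $L^{\otimes k}$ is $-ik\omega$, and a unit section $\tau$ satisfies $\nabla\tau=-ia\otimes\tau$ for a real $1$-form $a$ with $da=k\omega$ and $|\nabla\tau|=|a|$. Since $\omega=d\lambda$ on $W$, any such $a$ differs from $k\lambda$ by a closed form, and a gauge transformation modifies $a$ by an exact form. The naive choice $a=k\lambda$ (the parallel section of $\nabla+ik\lambda$, when it exists) gives $|\nabla\tau|=k|\lambda|$, which is of size $k$ because $\lambda=\iota_V\omega$ does not vanish on the open strata of the skeleton. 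The whole point is therefore to choose the gauge so as to cancel $k\lambda$ down to size $O(1)$, and this is what the contraction makes possible. For the topological input I would observe that $\nabla+i\lambda$ is a flat connection on $L|_W$ (its curvature is $-i\omega+i\,d\lambda=0$), so its holonomy lies in $H^1(W;\R/\zee)$ with Bockstein equal to $c_1(L|_W)$, a torsion class. Hence the holonomy of $\nabla^{\otimes k}+ik\lambda$ is the $k$-th power, with Bockstein $c_1(L^{\otimes k}|_W)=k\,c_1(L|_W)$; when $k$ is a multiple of $|\mathrm{Tor}(H_1(W;\zee))|$ this vanishes, so the holonomy is represented by a real closed $1$-form $\eta_k$ whose periods may be taken in a fixed fundamental domain of the lattice $H^1(W;\zee)\subset H^1(W;\R)$. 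Representing $\eta_k$ by harmonic forms then bounds its sup-norm by a constant $C_0$ independent of $k$.

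The geometric heart of the argument, and the step I expect to be the main obstacle, is to produce a smooth function $\phi$ on a neighborhood $N$ of the skeleton $\Skel(W)=\bigcap_{t\ge0}W_t$ such that the $1$-form $\lambda-d\phi$ vanishes, as a full covector, at every point of $\Skel(W)$. This is plausible because $\Skel(W)$ is isotropic, so $\lambda$ already vanishes tangentially there, while $\lambda=\iota_V\omega$ vanishes entirely at the critical points of $\varphi$ (where $V=0$); thus the $1$-jet $(\,0,\ \lambda|_{\Skel}\,)$ along the closed, stratified skeleton is compatible, and a Whitney extension—assembled from local models on the strata by a partition of unity, with $\phi\equiv 0$ near the critical points—should yield the desired $\phi$. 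Granting this, continuity of $\lambda-d\phi$, its vanishing on the compact set $\Skel(W)$, and the fact that the nested compacta $W_t$ decrease to $\Skel(W)$ give $\epsilon_t:=\sup_{W_t}|\lambda-d\phi|\to 0$ as $t\to\infty$. Justifying the extension across the singular strata (or, alternatively, pre-arranging $\lambda$ to vanish on $\Skel(W)$ by a deformation of the Weinstein structure) is the delicate point; the model computations for the radial and hyperbolic Liouville fields suggest the right cancelling $\phi$ always exists.

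Finally I would assemble the section. Consider the connection $\nabla^{\otimes k}+i\big(k(\lambda-d\phi)-\eta_k\big)$ on $L^{\otimes k}|_{W_t}$; it is flat, since $d\big(k(\lambda-d\phi)-\eta_k\big)=k\,d\lambda=k\omega$, and by the choice of $\eta_k$ (and because $k\,d\phi$ is exact, hence invisible to holonomy) its holonomy is trivial. It therefore admits a parallel unit section $\tau_k$, which satisfies $\nabla^{\otimes k}\tau_k=-i\big(k(\lambda-d\phi)-\eta_k\big)\otimes\tau_k$ and hence $|\nabla\tau_k|=|k(\lambda-d\phi)-\eta_k|\le k\,\epsilon_t+C_0$ on $W_t$. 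Choosing a sequence $t_k\to\infty$ with $k\,\epsilon_{t_k}\le 1$ (possible since $\epsilon_t\to0$) yields the required uniform bound $|\nabla\tau_k|\le C:=1+C_0$ on $W_{t_k}$ for every $k$ that is a multiple of $|\mathrm{Tor}(H_1(W;\zee))|$. If $W$ is disconnected one runs the argument on each component.
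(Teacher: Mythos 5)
Your overall architecture---correct $\lambda$ by an exact form so that it becomes negligible on the contracted domains, control the leftover closed part by harmonic representatives and the integer lattice, then take a parallel unit section of the resulting flat connection with trivial holonomy---is the same as the paper's, and your topological step (the flat connection $\nabla+i\lambda$, Bockstein of its holonomy equal to the torsion class $c_1(L|_W)$, the lift $\eta_k$ chosen with class in a fixed fundamental domain of the lattice $H^1(W;\zee)\subset H^1(W;\arr)$ and bounded pointwise via harmonic representatives) is a correct and essentially equivalent packaging of the paper's gauge-fixing via Neumann-harmonic forms. The genuine gap is exactly where you flagged it: the ``geometric heart.'' You require a \emph{single} smooth $\phi$ with $\lambda-d\phi$ vanishing, as a full covector, at \emph{every} point of $\Skel(W)$. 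The paper explicitly warns (see the remark following Lemma \ref{liouvillelemma}) that this need not be possible when the skeleton is not a smooth submanifold---and the skeleton is generically singular, being a CW complex of isotropic cells. The Whitney compatibility you invoke is precisely what fails. Since $\lambda$ annihilates the tangent spaces of the cells, $d\phi=\lambda$ on $\Skel(W)$ forces $\phi$ to be constant on each connected open cell, so the $C^1$ Whitney condition reduces to $|\lambda_x(y-x)|=o(|y-x|)$ uniformly for $x,y\in\Skel(W)$. At a point $x$ in the interior of a positive-dimensional cell (away from the critical point) one has $\lambda_x=\iota_{V(x)}\omega\neq 0$, and if $y$ ranges over points of a higher-dimensional cell whose closure passes through $x$, the condition demands that $\lambda_x$ annihilate all limiting secant directions from that cell. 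This holds only if the tangent planes of the higher cell converge along the lower cell ($C^1$-regularity of the stratification), a regularity that closures of descending manifolds of a Weinstein (even Morse--Smale) flow need not possess; nothing in your partition-of-unity patching supplies it.

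The repair is to abandon exact vanishing in favor of $\epsilon$-smallness with $\epsilon$ allowed to depend on $k$, and this is what the paper does. Lemma \ref{liouvillelemma} produces, for every $\epsilon>0$, a Liouville form $\lambda_\epsilon=\lambda+d(fH)$---so exactly a corrector of your type, since the modification $f(\alpha-\lambda)+H\,df=d(fH)$ is exact---with $|\lambda_\epsilon|<\epsilon$ on a neighborhood $U_\epsilon$ of the skeleton. The construction is an induction over the cells: near a compact piece $K$ of an open cell one takes the standard model $T^*L\oplus\arr^{2\ell}$, the model Liouville form vanishing along $K$, and the relative Poincar\'e lemma to write the difference as $dH$ with $H=0$ along the cell; the cutoff interpolation is again Liouville and vanishes on $K$, but it destroys exact vanishing near the attaching regions, which is precisely why only smallness (not vanishing) survives on all of $\Skel(W)$. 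Taking $\phi_k$ with $|\lambda-d\phi_k|<\frac{1}{k}$ on $U_k$ and then $t_k$ with $W_{t_k}\subset U_k$, your assembly step goes through verbatim, giving $|\nabla\tau_k|\leq k\cdot\frac{1}{k}+C_0$ on $W_{t_k}$; with this substitution your argument coincides with the paper's proof.
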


Note that if $k$ satisfies the condition in the statement then $L^\tk$ is topologically trivial over $W$. We assume $k$ satisfies this condition from now on.

Granted Lemma \ref{trivlemma}, one uses the sections $\tau_k$ to fix the arguments of the coefficients $w_p$ appearing in \eqref{sectiondef}. More explicitly, and following \cite{AGM01}, for each $k$ we let $P_k$ be a finite subset of the form $P_k = P_k' \cup P_k ''$, where $P_k'$ is a subset of $W_{t_k}$ such that the $g_k$-unit balls of radius 1 cover $W_{t_k}$, and any two points of $P_k'$ are $g_k$-distance at least $\frac{2}{3}$ apart, and $P_k''$ lies in $X - W_{t_k}$ (we suppose the whole collection $P_k'\cup P_k''$ satisfies the covering and non-clustered conditions over all of $X$). 
Then we define sections
\[
s_{k,W} = \sum_{p\in P_k'} \frac{\tau_k(p)}{\sigma_{k,p}(p)} \sigma_{k,p}
\]
of $L^{\otimes k}$ over $X$, where $\tau_k$ is the trivialization of $L^{\otimes k}$ over $W_{t_k}$ of Lemma \ref{trivlemma}. (Thus, the coefficients $w_p = 0$ when $p\in P_k''$.)  The proofs of \cite[Lemmas 3 and 4]{AGM01} carry over verbatim to this situation to show that the section $s_{k,W}$ is ``concentrated'' over $W_{t_k}$ in the sense of \cite{AGM01}, and in particular $|s_{k,W}|$ satisfies a lower bound over $W_{t_k}$ independent of $k$. Then replacing $s_{k,W}$ by suitable $C^1$-small perturbations as above, we obtain the desired family $\{\tilde s_{k,W}\}$.

To see that we can arrange $\tilde s_{k,W}(p_0) = 0$ for any chosen $p_0\in X - W$, we argue just as in \cite{AGM01}. Namely, instead of the section $\sigma_{k,p_0}$ supported near $p_0$ we use $u_{k,p_0} = k^{1/2} z_1 \sigma_{k, p_0}$ where $(z_1,z_2)$ are local approximately holomorphic coordinates centered at $p_0$. Then replace $\{s_{k,W}\}$ by the asymptotically holomorphic sequence $\{s_{k,W} + u_{k, p_0}\}$. For large enough $k$, the support of $u_{k,p_0}$ is disjoint from $W_{t_k}$, and therefore taking small perturbations of the new sequence yields symplectic hypersurfaces $\Sigma_k$ that are disjoint from $W_{t_k}$ and pass through a point $x_k$ that is within $g_k$-distance 1 of $p_0$. We can ensure $\Sigma_k$ contains $p_0$ itself by applying a Hamiltonian diffeomorphism supported away from $W_{t_k}$ and moving $x_k$ to $p_0$.

This completes the proof of Theorem \ref{ratconvthm}, given Lemma \ref{trivlemma}.

\subsection{Proof of Lemma \ref{trivlemma}}

Recall that if $(W, \omega, \varphi, V)$ is a Weinstein structure on $W$, then the {\it skeleton} of $W$ is the closed subset
\[
\Skel(W) = \bigcap_{t>0} W_t
\]
where we recall our convention that $W_t = \Phi_{-t}(W)$ for $t>0$, and $\Phi_t$ is the time-$t$ flow of the Liouville field $V$.  By a homotopy of Weinstein structures we can assume that the flow of $V$ is Morse-Smale, and then by adjusting $\varphi$ suitably, we can suppose the critical values of $\varphi$ occur in increasing order of Morse index (see for example \cite[Proposition 12.12, Lemma 12.20]{CE}). Under these conditions, we have that the skeleton of $W$ has the structure of a $CW$ complex $C\subset W$, whose open cells are the descending manifolds of the critical points of $\varphi$ \cite[Section 4.9]{AudinDamian} and which are smooth isotropic submanifolds \cite[Lemma 11.13]{CE}.

\begin{lemma}\label{liouvillelemma} Let $\epsilon >0$, and $U$ an open neighborhood of $C$. There exists a 1-form $\lambda_\epsilon$ on $W$ such that 
\begin{enumerate}
\item $\lambda_\epsilon$ is a Liouville form, i.e. $d\lambda_\epsilon = \omega$ on $W$.
\item $\lambda_\epsilon = \lambda$ outside $U$, where $\lambda = \iota_V \omega$.
\item We have $|\lambda_\epsilon| < \epsilon$ at each point of  $C$.
\end{enumerate}
In particular there is a neighborhood $U_\epsilon\subset U$ of $C$ such that $\lambda_\epsilon = \lambda$ outside $U$ and $|\lambda_\epsilon|< \epsilon$ on $U_\epsilon$.
\end{lemma}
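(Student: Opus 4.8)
The plan is to produce $\lambda_\epsilon$ as an \emph{exact} modification of $\lambda$, namely $\lambda_\epsilon = \lambda + d(\chi f)$, where $f$ is a function defined on a neighborhood of $C = \Skel(W)$ and $\chi\in C^\infty(W,[0,1])$ is a cutoff equal to $1$ near $C$ and supported in $U$. For any such $f$ and $\chi$, conditions (1) and (2) are immediate: $d\lambda_\epsilon = d\lambda = \omega$ since $d\,d(\chi f)=0$, and $\lambda_\epsilon = \lambda$ wherever $\chi\equiv 0$, in particular outside $U$. Thus everything reduces to condition (3): I must construct $f$ so that the covector $(\lambda + df)_x\in T_x^*W$ has small $g$-norm at every $x\in C$ (note $d(\chi f) = df$ where $\chi\equiv 1$). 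In fact I will arrange $(\lambda + df)|_C = 0$ exactly on the smooth strata, keeping the error under $\epsilon$ along the singular locus. The final clause of the lemma then follows formally: $\lambda_\epsilon$ is continuous and $C$ is compact, so $\{\,|\lambda_\epsilon|<\epsilon\,\}\cap U$ is an open set containing $C$, which we take as $U_\epsilon$.

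The geometric input that makes this possible is that \emph{$\lambda$ pulls back to zero on $C$}. Recall $C$ is the union of the descending manifolds $D_i$ of the critical points of $\varphi$, each a smooth isotropic submanifold by \cite[Lemma 11.13]{CE}. Each $D_i$ is a union of flow lines limiting to its critical point and hence invariant under the flow of $V$, so $V$ is tangent to $D_i$; since $D_i$ is isotropic, for $x\in D_i$ and $u\in T_xD_i$ we get $\lambda_x(u)=\omega(V_x,u)=0$. Thus $i_{D_i}^*\lambda = 0$ for every cell. Crucially, $\lambda_x$ need not vanish as a covector on all of $T_xW$: its component $\omega(V_x,\cdot)$ in directions transverse to $D_i$ is generically nonzero, and this is exactly what must be corrected (even a two-dimensional model with an index-$1$ critical point shows $|\lambda|\neq 0$ along the open part of the descending arc). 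The vanishing of the tangential part is precisely the integrability condition that makes the prescribed $1$-jet ``$f|_C = 0$ and $df|_C = -\lambda|_C$'' consistent.

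I would then build $f$ inductively over the skeleta of the finite $CW$ complex $C$, ordered by cell dimension. Near the open part of each top-dimensional stratum $D_i$ I use a tubular neighborhood with normal coordinates; the covector $-\lambda|_{D_i}$, having zero tangential part, is a section of the conormal bundle $N^*D_i$, and taking $f$ equal to first order to the corresponding linear function in the normal directions yields a local $f_i$ with $f_i|_{D_i}=0$ and $(\lambda+df_i)|_{D_i}=0$. These local solutions must be patched, using a partition of unity subordinate to the cells, across the lower-dimensional strata where distinct cells meet. \textbf{This patching across the singular locus of $C$ is the main obstacle}: the $f_i$ need not agree near the incidence loci, and $C$ is not a manifold there. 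The point is that the conclusion only requires \emph{smallness}, not exact vanishing, so the mismatch can be absorbed into the $\epsilon$ budget: by shrinking the neighborhood on which each $f_i$ is applied and invoking compactness of $C$ together with the finiteness of the cell structure, one bounds the finitely many patching errors uniformly below $\epsilon$. The output is a smooth $f$ defined near $C$ with $f|_C=0$ and $\sup_{x\in C}|(\lambda+df)_x| < \epsilon$.

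Finally, I choose $\chi$ equal to $1$ on a neighborhood of $C$ contained in the domain of $f$ and supported in $U$, and set $\lambda_\epsilon = \lambda + d(\chi f)$. Properties (1) and (2) hold as observed above, while on $C$ we have $\lambda_\epsilon = \lambda + df$, giving $|\lambda_\epsilon|<\epsilon$ there. Continuity of $\lambda_\epsilon$ and compactness of $C$ then yield the neighborhood $U_\epsilon = \{\,|\lambda_\epsilon|<\epsilon\,\}\cap U$, on which $|\lambda_\epsilon|<\epsilon$ and outside $U$ of which $\lambda_\epsilon=\lambda$, completing the argument.
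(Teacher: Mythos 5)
Your ansatz $\lambda_\epsilon = \lambda + d(\chi f)$ and your key geometric observation---that $V$ is tangent to each descending cell, so $\lambda$ pulls back to zero on every stratum of $C$---are both correct, and in fact the paper's construction is also an exact modification of exactly this shape. But the step you yourself flag as ``the main obstacle'' is a genuine gap, and the mechanism you offer to close it does not work. Write the patched function as $f = \sum_i \chi_i f_i$ with $\sum_i \chi_i = 1$ near $C$. At a point $y \in C$ in a transition region where a cell $D_2$ approaches a lower stratum $D_1$, one has
\[
(\lambda + df)(y) \;=\; \sum_i \chi_i(y)\bigl(\lambda + df_i\bigr)(y) \;+\; \bigl(f_1 - f_2\bigr)(y)\, d\chi_1(y).
\]
The first sum is indeed $O\bigl(d(y,D_1)\bigr)$ and causes no trouble. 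The cross term does: at $y \in D_2$ we have $f_2(y)=0$, but $f_1(y) \approx -\lambda_x(v)$, where $x \in D_1$ is the nearest point and $v$ the (essentially conormal) displacement; since $\lambda_x$ has a nonzero conormal component along $D_1$ (your own two-dimensional example), $|f_1(y)|$ is of order $|\lambda_x|\, d(y,D_1)$. If the partition transitions at distance $\sim\delta$ from $D_1$, then $|d\chi_1| \sim \delta^{-1}$ while $|f_1| \sim |\lambda_x|\,\delta$ there, so the product is of order $|\lambda_x|$ \emph{uniformly in} $\delta$. Shrinking the neighborhoods shrinks $|f_1|$ and inflates $|d\chi_1|$ at exactly compensating rates; compactness and finiteness of the cell structure cannot push this error below $\epsilon$.

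The paper's proof avoids this by never asking the corrected form to be small on a whole open cell at once. It inducts over the cells: having arranged $|\lambda_0| < \epsilon$ on the subcomplex $C_0$, continuity gives a neighborhood $U_0$ of $C_0$ on which $|\lambda_0|<\epsilon$ still holds; the new cell is then corrected only over a compact subset $K$ of its \emph{open} part, chosen so that $K$ together with $U_0$ covers the closed cell, and the cutoff's transition region is kept away from $C_0$. Crucially, the modification there has the form $d(fH)$, where the potential $H$ (furnished by the relative Poincar\'e lemma) is chosen to vanish along the open cell $L$; hence the cross term $H\,df$ vanishes at points of $L$, and along $L$ the new form is simply $(1-f)\lambda_0$, which is already small where the cutoff is active. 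In your language: the function multiplying the derivative of a cutoff must itself vanish along the stratum being handled, and the cutoff must transition away from all other strata. Your outer cutoff $\chi$ (equal to $1$ near all of $C$) has this property, but your inner partition functions $\chi_i$ do not---and that is precisely where the argument breaks. Repairing your scheme essentially forces you into the paper's inductive structure (or into invoking something substantially stronger, e.g.\ Whitney-regularity of the Morse--Smale stratification to run a Whitney extension argument for the prescribed $1$-jet on all of $C$).
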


Note that if the skeleton $C$ is a smooth submanifold, then one can find a Liouville form as in the statement that actually vanishes everywhere along $C$. In general, however, this need not be possible.

\begin{proof}
We begin by recalling an elementary portion of a more detailed construction from \cite[Section 12.3]{CE}. Let $\lambda$ and $\alpha$ be Liouville forms for $\omega$, and $f$ a smooth function on $W$. We suppose the support of $f$ is contained in some open set $U$, and require only that $\alpha$ is defined on $U$. Finally, assume $H$ is a function  with $dH = \alpha - \lambda$ on $U$. Then the 1-form $\tlambda$ given by
\[
\tlambda = (1-f)\lambda + f\alpha + H\, df
\]
is easily seen to define a smooth Liouville form on $W$ such that $\tlambda = \lambda$ on any open set on which $f = 0$, and $\tlambda = \alpha$ on any open set on which $f = 1$. In fact, we have $\tlambda = \alpha$ whenever $f = 1$ and $H = 0$.

Now suppose $K\subset L$ is a compact subset of an isotropic submanifold $L\subset W$, and suppose for convenience that the symplectic normal bundle $TL^\omega/ TL$ is trival of rank $2\ell$. We also assume that the given Liouville form $\lambda$ on $W$ has $\lambda|_L = 0$, or equivalently that the corresponding Liouville vector field is tangent to $L$ (both of these assumptions hold in our application below). Then by standard neighborhood theorems, $K$ has a neighborhood  in $W$ symplectomorphic to a neighborhood of $K$ in $T^*L \oplus \arr^{2\ell}$, equipped with a standard symplectic form. In particular, taking coordinates $\{q_i\}$ on $L$ and corresponding coordinates $\{p_i\}$ in the fibers of $T^*L$, along with symplectic coordinates $(x_j,y_j)_j$ on $\arr^{2\ell}$, the form $\alpha = -\sum_i p_i\, dq_i+ \frac{1}{2} \sum_{j = 1}^\ell (x_j \, dy_j - y_j \, dx_j)$ defines a Liouville form in a neighborhood of $K$, which vanishes along $K$. Since $\lambda|_L = 0$, the relative Poincar\'e lemma implies that there is a function $H$ on a neighborhood $U_1$ of $K$ so that $dH = \alpha -\lambda$ and also $H = 0$ along $L\cap U_1$ (see, for example, \cite[Proposition 6.8]{CdS}). Then taking $f$ to be a bump function equal to $1$ near $K$ and vanishing outside $U_1$, the construction above gives a Liouville form $\tlambda$ such that:
\begin{enumerate}
\item[(a)] $\tlambda = \lambda$ outside $U_1$
\item[(b)] $\tlambda = \alpha$ on a neighborhood of $K$, in particular $\tlambda_x = 0$ for $x \in K$.
\item[(c)] $|\tlambda|\leq |\lambda|$ on $L\cap U_1$
\end{enumerate}

We now construct the desired form $\lambda_\epsilon$ by inductive modification near the cells of $C$. Near a 0-cell $e_0$, it is simple to apply the above construction in a Darboux chart, taking $K = L = e_0$ and $\alpha$ a standard Liouville form vanishing at the origin. (Alternatively, since 0-cells are minima of $\phi$ and hence by the gradient-like condition are points at which $\lambda = 0$, the given Liouville form already satisfies conditions (1) and (2) of the statement, and condition (3) on the 0-skeleton.)  Now assume that $C = C_0 \cup e_k$, where $e_k$ is a $k$-cell attached to a subcomplex $C_0\subset C$. Suppose a Liouville form $\lambda_0$ has been found such that $|\lambda_0|< \epsilon$ on $C_0$, and $\lambda_0 = \lambda$ outside the neighborhood $U$ of $C$. By continuity we can find an open neighborhood $U_0\subset U$ of $C_0$ on which $|\lambda_0|< \epsilon$. 

Let $\chi: D^k\to C\subset W$ be a characteristic map for $e_k$. Writing $L = \chi(\Int(D^k))$, we have by the remarks previously that $L$ is a smooth isotropic submanifold of $W$, and that $\lambda|_L = 0$.

Let  $A = \chi^{-1}(U_0) \subset D^k$. Since $\chi(\partial D^k)\subset C_0\subset U_0$, we see that $A$ is an open set containing $\partial D^k$. Choose a compact subset $T \subset \Int(D^k)$ so that $A\cup \Int(T) = D^k$, and let $K = \chi(T)\subset L$. Now apply the construction from above, using the compact set $K\subset L$ and a neighborhood $U_1$ of $K$ in $W$ that we may assume is contained in the given neighborhood $U$ of $C$. We obtain a Liouville form $\tlambda$ on $W$ satisfying the properties (a)-(c) above, which clearly also has properties (1) and (2) of the statement. By construction the sets $C_0\cap U_0$ and $K$ cover $C$; on $C_0\cap U_0$ we have that $|\tlambda|\leq |\lambda|< \epsilon$, while $\tlambda$ vanishes on $K$. This verifies (3) of the statement, setting $\lambda_\epsilon = \tlambda$. 
\end{proof}

\begin{proof}[Proof of Lemma \ref{trivlemma}]
By repeated application of Lemma \ref{liouvillelemma}, for each $k$ we can find a Liouville form $\lambda_k$ so that $|\lambda_k|< \frac{1}{k}$ on the skeleton $C$, and in fact a sequence of neighborhoods $U_1\supset U_2\supset \cdots$ of $C$ so that $|\lambda_k|< \frac{1}{k}$ on $U_k$. 

For each $k$ we choose an arbitrary unitary trivializing section $\sigma_k$ of $L^{\otimes k}$ (over $W$), and let $\alpha_k$ be the corresponding connection 1-forms determined by $\nabla\sigma_k = \alpha_k\sigma_k$. Observe that
\[
d \alpha_k = -ik\omega = -ikd\lambda_k,
\]
so that the form $\alpha_k + ik\lambda_k\in \Omega^1(W; i\arr)$ is closed for each $k$. Note that while the Liouville forms $\lambda_k$ vary with $k$, our deformation of $W$ will be carried out using the original Liouville vector field $V$.

Observe that replacing $\sigma_k$ by $\rho\sigma_k$ for a smooth function $\rho: W\to S^1$ replaces $\alpha_k$ by $\alpha_k + \rho^*{d\theta}$. In particular by choosing $\rho$ of the form $e^{if}$, we can replace $\alpha_k + ik\lambda_k$ by any 1-form in its de Rham class. It is well known that on a compact Riemannian manifold $W$ with boundary, the de Rham cohomology is naturally isomorphic to the (finite-dimensional) space $\H^*_N(W)$ of harmonic forms with Neumann boundary conditions (see for example \cite[Theorem 2.6.1]{schwarzbook}). Hence, adjusting $\sigma_k$ by a topologically trivial gauge transformation $e^{if}$, we can arrange that $\alpha_k + ik\lambda_k$ lies in $i\H^1_N(W)$. Moreover, the remaining gauge action (by topologically nontrivial $\rho$) allows translation of $\alpha_k + ik\lambda_k$ by any element of the image of $H^1(W;i\zee)$ in $i\H^1_N(W)$. Thus by suitable choice of the trivializations $\sigma_k$ we can assume that the forms $\alpha_k +ik\lambda_k$ lie in a subset of $i\H^1_N(W)$ that is bounded independent of $k$. With this condition, we can find a constant $C' >0$ such that there is a pointwise bound $|\alpha_k + ik\lambda_k|_g \leq C'$  over $W$ for all $k$. 

Next observe that the flow $\Phi_{-t}$ carries $W$ inside any given neighborhood of the skeleton $C$, for all sufficiently large $t$. In particular we can choose an increasing sequence $\{t_k\}$ so that $\Phi_{-t_k}(W)\subset U_k$ for each $k$. Thus we see that both $|\alpha_k + ik\lambda_k|$ and $|k\lambda_k|$ are bounded over $W_{t_k}$ by a constant independent of $k$ (in fact $|k\lambda_k|\leq 1$ on $W_{t_k}$), and therefore the same is true for $|\alpha_k|$. Since $|\nabla \sigma_k| = |\alpha_k|$, the lemma follows.
\end{proof}

\begin{remark} From the proof in this section, we see that the conclusion of Theorem \ref{ratconvthm} holds if one assumes only that $W\subset X$ is a Liouville domain admitting a Liouville field $V$ for which the associated skeleton is a smooth isotropic $CW$ complex. We do not know if such a Liouville domain is necessarily Weinstein.
\end{remark}

\subsection{Non-integral symplectic classes}\label{nonintsec}
We now show how to remove the condition that $\frac{1}{2\pi}[\omega]$ be an integral class, by an argument essentially the same as that in \cite[Section 3.2]{AGM01}. More precisely, we have:

\begin{lemma}\label{perturblemma} Let $(X,\omega)$ be a symplectic $2n$-manifold and $W\subset X$ a Weinstein domain symplectically embedded in $X$. Then there exists a symplectic form $\omega'$ on $X$ with the following properties:
\begin{enumerate}
\item The class $\frac{1}{2\pi}[\omega']$ is integral.
\item $W$ is Weinstein in $(X,\omega')$, in fact $\omega'|_W = \omega|_W$.
\item $\omega'$ is a positive multiple of a form arbitrarily $C^0$-close to $\omega$.
\item $c_1(X,\omega') = c_1(X,\omega)$.
\end{enumerate}
\end{lemma}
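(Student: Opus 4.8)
The plan is to perturb the cohomology class of $\omega$ to a rational class while leaving the symplectic form untouched on $W$, then clear denominators by a dilation. First I would note that since $W$ is Weinstein, $\omega|_W = d\lambda$ is exact, so $\frac{1}{2\pi}[\omega]$ is supported ``away from $W$'' in the sense that it lies in the image of the relative cohomology $H^2(X,W;\arr) \to H^2(X;\arr)$. The strategy is therefore to approximate $\frac{1}{2\pi}[\omega]$ by a rational class $a \in H^2(X;\cue)$ that also comes from $H^2(X,W;\cue)$, and to realize this approximating class by a closed $2$-form $\eta$ supported in $X - W$ (or at least vanishing on $W$). Concretely, I would choose a closed $2$-form $\eta$ representing a rational class, with $\eta$ identically zero on a neighborhood of $W$ and $[\eta]$ arbitrarily $C^0$-close to $\frac{1}{2\pi}[\omega]$; such $\eta$ exists because rational relative classes are dense in $H^2(X,W;\arr)$, and a representative supported off $W$ can be chosen since $\omega|_W$ is already exact.

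Next I would set $\omega_s = \omega + s\,\eta'$ for a suitable closed form $\eta'$ chosen so that $\frac{1}{2\pi}[\omega_s]$ becomes rational, while arranging $\eta' \equiv 0$ on $W$ so that property (2) holds automatically ($\omega_s|_W = \omega|_W$). For $\eta'$ sufficiently $C^0$-small, nondegeneracy of $\omega$ is preserved on the compact manifold $X$, so $\omega_s$ is symplectic; this gives property (3). Since the perturbation is supported away from $W$ and leaves $\omega|_W$ unchanged, the Liouville field $V$ and Morse function $\varphi$ witnessing the Weinstein structure on $W$ survive verbatim, securing (2). Finally, multiplying the resulting form by a large positive integer $N$ clears the denominators of the rational class $\frac{1}{2\pi}[\omega_s]$ and yields $\omega' = N\omega_s$ with $\frac{1}{2\pi}[\omega']$ integral, giving (1); the dilation does not affect the Weinstein property since scaling $\omega|_W$ by a constant rescales $\lambda$ and is absorbed by rescaling $V$.

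For property (4), I would invoke the fact that $c_1(X,\omega)$ depends only on the homotopy class of an $\omega$-compatible almost-complex structure, and that the space of such structures is contractible. Since $\omega'$ is $C^0$-close to a positive multiple of $\omega$, one can choose a path of symplectic forms $\omega_s$ connecting $\omega$ to $\omega_s$ (and then the dilation, which preserves compatible almost-complex structures outright), and a corresponding path of compatible almost-complex structures $J_s$; the first Chern classes $c_1(X, J_s)$ are then all equal by homotopy invariance, giving $c_1(X,\omega') = c_1(X,\omega)$.

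The main obstacle I anticipate is the first step: producing a rational approximating class that \emph{simultaneously} (i) is $C^0$-close to $\frac{1}{2\pi}[\omega]$ as a \emph{form}, not merely in cohomology, (ii) is represented by a form vanishing on $W$ so that $\omega'|_W = \omega|_W$ holds on the nose, and (iii) has rational periods. Reconciling the pointwise $C^0$-closeness (needed for nondegeneracy and for property (3)) with the cohomological rationality and the exact-on-$W$ constraint requires care: one must exploit that $\omega - d\lambda'$ can be made to vanish near $W$ using a cutoff of the Liouville primitive $\lambda$, and then perturb the resulting form in the complementary region $X - W$ where there is no constraint. This is exactly the technical content of \cite[Section 3.2]{AGM01}, and I would follow that argument closely, the only new feature being the bookkeeping to ensure the perturbation is supported in $X - W$ rather than globally.
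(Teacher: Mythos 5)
Your proposal is correct and follows essentially the same route as the paper's proof: lift $[\omega]$ to the image of $H^2(X,W;\arr)\to H^2(X;\arr)$ using exactness of $\omega|_W$, add an arbitrarily small closed $2$-form vanishing on $W$ to make the class rational, clear denominators by scaling with a positive integer $N$, and deduce (4) from the fact that an $\omega'$-compatible almost-complex structure can be taken $C^0$-close to an $\omega$-compatible one. The technical obstacle you flag at the end is resolved exactly as you suggest (and as in \cite{AGM01}, which the paper also defers to): since $H^2(X,W;\arr)$ is finite-dimensional, one fixes representatives vanishing on $W$ for a basis, so that a small cohomological perturbation toward a rational class is realized by a form that is automatically $C^0$-small.
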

Thus, if $(Y,\xi)$ is the contact boundary of a Weinstein domain symplectically embedded in a positive symplectic 4-manifold $(X^4,\omega)$, then $(Y,\xi)$ also bounds a Weinstein domain in $(X,\omega')$ where $\frac{1}{2\pi}[\omega']$ is integral (and $\omega'$ is still positive).

\begin{proof}
Since $\omega$ is exact on $W$, we see that $[\omega]$ lies in the image of the homomorphism $j^*: H^2(X,W)\to H^2(X)$ in de Rham cohomology, where $j$ is the inclusion of $X$ in the pair $(X,W)$. Writing $[\omega] = j^*[\alpha]$ for a 2-form $\alpha$ vanishing on $W$, observe that we can find an arbitrarily small closed 2-form $\alpha_0$ also vanishing on $W$ so that $\frac{1}{2\pi}[\alpha + \alpha_0]$ lies in the image of $H^2(X, W;\cue) \to H^2(X,W;\arr)$. Furthermore, for any sufficiently small closed $\alpha_0$, the form $\omega + \alpha_0$ is symplectic on $X$. Thus the class $\frac{1}{2\pi}[\omega+ \alpha_0] = \frac{1}{2\pi}j^*[\alpha + \alpha_0]$ lies in $H^2(X;\cue)$, and setting $\omega' = N(\omega + \alpha_0)$ for suitable positive integer $N$ gives the desired symplectic form.

If $J$ is an almost-complex structure compatible with $\omega$, then  $\omega'$ admits a compatible almost-complex structure $J'$ that is $C^0$-close to $J$. Since the space of almost-complex structures is locally connected, (4) follows.
\end{proof}

This argument also shows that a Weinstein domain $W\subset X$ always has symplectic hypersurfaces passing through any given point in its complement after a suitable contraction. Indeed, we replace $\omega$ by $\omega'$ and $J$ by $J'$ as above, and (following \cite{AGM01}) observe that since $\omega(v, J'v)>0$, any $J'$-holomorphic subspace of $TX$ is $\omega$-symplectic. Hence a sequence $\Sigma_k$ of $J'$-almost-holomorphic submanifolds is $\omega$-symplectic for large enough $k$. Since $\omega = \omega'$ on $W$ we can suppose the Weinstein structure on $W$ is unchanged, so the contraction of $W$ is not affected by the modification. Furthermore, since the Chern class of $\omega'$ is the same as that of $\omega$, the argument using the adjunction inequality in the proof of Theorem \ref{obstrthm}(b) still applies to show $\Sigma_k\cdot \Sigma_k > \max\{0, 2g(\Sigma_k) - 2\}$.

\bibliography{references}
\bibliographystyle{amsplain}

\end{document}